\documentclass[11pt]{amsart}
\usepackage[margin=3cm]{geometry}               
    \usepackage{graphicx}
\usepackage{tikz}
\usepackage{tikz-cd}
\usepackage{mathrsfs}
\usetikzlibrary{decorations.pathreplacing}
\usepackage{lineno, hyperref}
\usepackage{amssymb,amsfonts, amsmath, amsthm}
\usepackage{epstopdf}
\usepackage[shortlabels]{enumitem}
\usepackage{cancel}
\usepackage{thmtools} 
\usepackage{mathtools} 
\usepackage{mathrsfs}
\usepackage{yhmath}
\usepackage{amsmath}
\usepackage[english]{babel}
\usepackage[toc,page]{appendix}
\usepackage{float}
\restylefloat{table}
 \usepackage{hyperref}
  \hypersetup{
colorlinks=true,
linkcolor=cyan,
citecolor=cyan
}
\usepackage{amsbsy}

\swapnumbers
\newtheorem*{theorem*}{Theorem}
\newtheorem*{question*}{Question}
\newtheorem*{corollary*}{Corollary}
\newtheorem*{theoremA}{Theorem A}
\newtheorem*{theoremB}{Theorem B}
\newtheorem*{theoremC}{Theorem C}

\newtheorem*{main-theorem}{Main Theorem}

\newtheorem{theorem}[subsection]{Theorem}
\newtheorem{lemma}[theorem]{Lemma}
\newtheorem{fact}[theorem]{Fact}
\newtheorem{claim}[theorem]{Claim}
\newtheorem*{claim*}{Claim}
\newtheorem{proposition}[theorem]{Proposition}
\newtheorem{corollary}[theorem]{Corollary}
\newtheorem{conjecture}[theorem]{Conjecture}
\newtheorem{example}[theorem]{Example}

\theoremstyle{definition}
\newtheorem{definition}[theorem]{Definition}

\newtheorem{notation}[theorem]{Notation}

\newtheorem{remark}[theorem]{Remark}
\newtheorem{lem-def}[theorem]{Lemma-definition}

\newtheorem{question}[theorem]{Question}

\newcommand{\sep}{^{\rm sep}}

\newcommand{\N}{\mathbb N}

\newcommand{\F}{\mathbb F}
\newcommand{\cF}{\mathscr F}

\newcommand{\cM}{\mathcal M}
\newcommand{\cU}{\mathcal U}
\newcommand{\Fp}{\F_p}
\newcommand{\OO}{\mathcal{O}}
\newcommand{\cL}{\mathcal{L}}
\newcommand{\cC}{\mathcal{C}}
\newcommand{\cA}{\mathcal{A}}
\newcommand{\cB}{\mathcal{B}}
\newcommand{\cD}{\mathcal{D}}

\newcommand{\cK}{\mathcal{K}}
\newcommand{\res}{\mathrm{res}}

\newcommand{\Def}{\mathrm{def}}

\newcommand{\ACF}{\mathrm{ACF}}
\newcommand{\SCF}{\mathrm{SCF}}
\newcommand{\SCVF}{\mathrm{SCVF}}
\newcommand{\ACVF}{\mathrm{ACVF}}
\newcommand{\DE}{\mathrm{def}}
\newcommand{\DOAG}{\mathrm{DOAG}}

\newcommand{\Th}{\mathrm{Th}}

\newcommand{\bdd}{\mathrm{bdd}}
\newcommand{\OOnv}{\mathrm{conv}}

\newcommand{\tp}{\mathrm{tp}}

\newcommand{\id}{\mathrm{id}}

\newcommand{\SE}{\mathcal{S}\mathcal{E}}

\newcommand{\BP}{\mathrm{bp}}
\newcommand{\triv}{\mathrm{triv}}
\newcommand{\Aut}{\mathrm{Aut}}
\newcommand{\vd}{\mathrm{vd}}
\newcommand{\ld}{\mathrm{ld}}
\newcommand{\alg}{\mathrm{alg}}
\newcommand{\qftp}{\mathrm{qftp}}

\newcommand{\Ring}{\mathrm{ring}}
\newcommand{\og}{\mathrm{og}}
\newcommand{\trdeg}{\mathrm{trdeg}}

\newcommand{\cl}{\mathrm{cl}}

\newcommand{\Res}{\mathrm{Res}}
\newcommand{\VF}{\mathbf{VF}}
\newcommand{\RF}{\mathbf{k}}
\newcommand{\VG}{\mathbf{\Gamma}}
\newcommand{\VGi}{\mathbf{\Gamma_\infty}}
\newcommand{\cLr}{\cL_{\mathrm{ring}}}
\newcommand{\cLog}{\cL_{\mathrm{og}}}

\newcommand{\cLgk}{\cL_{\Gamma k}}

\DeclareMathAlphabet{\mathscrbf}{OMS}{mdugm}{b}{n}

\DeclareMathAlphabet{\mathscrbf}{OMS}{mdugm}{b}{n}

\newcommand{\elres}{\preccurlyeq}

\newcommand{\sepa}{\text{\tiny$\mathcal{S}\mathcal{E}$}}

\newcommand{\forkindep}[1][]{%
  \mathrel{
    \mathop{
      \vcenter{
        \hbox{\oalign{\noalign{\kern-.3ex}\hfil$\vert$\hfil\cr
              \noalign{\kern-.7ex}
              $\smile$\cr\noalign{\kern-.3ex}}}
      }
    }\displaylimits_{#1}
  }
}

\def\Ind#1#2{#1\setbox0=\hbox{$#1x$}\kern\wd0\hbox to 0pt{\hss$#1\mid$\hss}
\lower.9\ht0\hbox to 0pt{\hss$#1\smile$\hss}\kern\wd0}
\def\ind{\mathop{\mathpalette\Ind{}}}
\def\notind#1#2{#1\setbox0=\hbox{$#1x$}\kern\wd0\hbox to 0pt{\mathchardef
\nn=12854\hss$#1\nn$\kern1.4\wd0\hss}\hbox to
0pt{\hss$#1\mid$\hss}\lower.9\ht0 \hbox to
0pt{\hss$#1\smile$\hss}\kern\wd0}

\makeatletter
\def\widebreve{\mathpalette\wide@breve}
\def\wide@breve#1#2{\sbox\z@{$#1#2$}%
     \mathop{\vbox{\m@th\ialign{##\crcr
\kern0.08em\brevefill#1{0.8\wd\z@}\crcr\noalign{\nointerlineskip}%
                    $\hss#1#2\hss$\crcr}}}\limits}
\def\brevefill#1#2{$\m@th\sbox\tw@{$#1($}%
  \hss\resizebox{#2}{\wd\tw@}{\rotatebox[origin=c]{90}{\upshape(}}\hss$}
\makeatletter

\title[On separable pairs of fields and beautiful pairs of SCVF]{On separable pairs of fields and beautiful pairs of separably closed valued fields}

\author[Zhengqing He]{Zhengqing He}
\address{Universit\"{a}t Duisburg-Essen, Fakult\"{a}t für Mathematik, Mathematikcarr\'ee, Thea-Leymann-Str. 9, D-45127 Essen, Germany}
\email{zhengqing.he@stud.uni-due.de}
\thanks{}

\author[Martin Hils]{Martin Hils}
\address{Institut f\"{u}r Mathematische Logik und Grundlagenforschung, Universit\"{a}t M\"{u}nster, Einsteinstr. 62, D-48149 M\"{u}nster, Germany}
\email{hils@uni-muenster.de}
\thanks{MH was partially supported by the German Research Foundation (DFG) under Germany’s Excellence Strategy EXC 2044/2 - 390685587, Mathematics M\"{u}nster:
Dynamics-Geometry-Structure.}

\subjclass[2020]{Primary 03C45, 03C10, Secondary 03C64, 12J10, 12L12}

\keywords{Belles paires, beautiful pairs, separably closed fields, valued fields, pro-definability}

\date{\today }

\begin{document}

\bibliographystyle{plain}

\begin{abstract} We continue the study of beautiful pairs of henselian valued fields, initiated in joint work of the second author with Cubides Kovacsics and Ye. As our main result, we show that separably closed non-trivially valued fields of arbitrary imperfection degree admit beautiful pairs which are elementary pairs satisfying certain natural axioms. They satisfy beauty transfer, i.e., saturated models of their theory are again beautiful. The same holds for beautiful pairs associated to natural subclasses of all definable types, most notably for all definable types orthogonal to the value group and for all bounded definable types.  This yields strict pro-definability of the space of the definable types from any of these classes, in the theory SCVF of separably closed non-trivially valued fields. 

We also show a separable  analog of a classical result of Keisler's, namely that the first order theory of an infinite separable extension $L/K$ of fields -- considered in the pairs language and where the large field $L$ is separably closed, is determined by the theory of the small field $K$ together with the imperfection degree of $L$ over $K$. 
\end{abstract}

\maketitle

\section{Introduction}
Since the early days of model theory, pairs of models have been studied extensively, as objects in their own right but also in order to devise powerful tools for the study of the models themselves. Given an $\cL$-theory $T$ and $M\models T$, one names a substructure of $M$ with a new unary predicate $P$ and considers the $\cL_P$-structure $(M,P(M))$, where $\cL_P:=\cL\cup\{P\}$, often in order to reveal more subtle properties of the original $T$ and/or $M$. In the model theory of algebraic structures, the study of pairs was started  by A. Robinson in \cite{Robinson}, who showed, e.g., that the theory of proper pairs of algebraically closed fields of some fixed characteristic is complete. This was subsequently generalized by Keisler in \cite{Kei64} who proved that given a complete theory $T$ of fields, the theory $\ACF_T$ of pairs of fields $(K,P(K))$ with $P(K)\models T$, $K=K^{\alg}$ and $[K:P(K)]=\infty$ is complete.

Poizat \cite{poizat} used the complete theory $T_{\BP}$ of certain elementary pairs (called \emph{beautiful pairs}) of models of a complete stably theory $T$ as an approach to study nfcp and phenomena in connection with definable types in $T$. In modern terminology, he showed that $T$ does not have the finite cover property (i.e., is nfcp) if and only if every sufficiently saturated model of $T_{\BP}$ is beautiful -- which means $T$ has \emph{beauty transfer} -- if and only if the space of all global definable types may be naturally identified with a strict pro-definable set, i.e., with a projective limit of a family of definable sets with surjective definable transition functions. 

In their celebrated work \cite{HL} on topological tameness in Berkovich analytifications of algebraic varieties, Hrushovski and Loeser show that given an algebraic variety (or more generally a definable set) in a model of the theory $\ACVF$ of algebraically closed non-trivially valued fields, the space $\widehat{X}$ of all global definable types on $X$ which are orthogonal to the value group is strict pro-definable. This is a key fact in \cite{HL}. In their recent work \cite{CuHiYe26}, Cubides-Kovacsics, Ye and the second author have developped a theory of beautiful pairs outside the stable context, which in the case of $\ACVF$ yields strict pro-definability of other spaces of definable types as well, in particular of the space $\widetilde{X}$ of all bounded definable types and the space $S^{\Def}_X$ of all definable types. The formalism of pairs makes it possible to bring to bear the Ax-Kochen-Ershov philosophy in this context, reducing, e.g.,  the axiomatization and the verification of beauty transfer of the theory of beautiful pairs to the residue field and value group pairs. In a model of ACVF, the residue field $\RF$ and the value group $\VG$ are pure stably embedded model of ACF and of the theory DOAG of divisible non-trivial ordered abelian groups, respectively. There are 4 completions of the theory DOAG$_{\SE}$ of stably embedded elementary pairs $(M,P(M))$ of models of DOAG, determined by whether $M$ contains infinitesimal and/or infinite elements with respect to $P(M)$, and there are only two such completions for the residue field pairs, by the result of Robinson mentioned above. If $T$ is a completion of ACVF, the completions of $T_{\SE}$ are determined by the ones for the residue field and value group pairs, and they all correspond to theories of $\cK$-beautiful pairs, having beauty transfer, for an appropriate choice of \emph{natural class} 
$\cK$.

In this paper we provide \emph{analogs in the separable category} of the results we mentioned, working with separably closed (valued) fields instead of algebraically closed ones, and considering pairs of fields $(K,P(K))$ where $K/P(K)$ is a separable extension. Recall that the theory $\SCF_p$ of separably closed fields of characteristic $p>0$ is stable, and its completions are determined by fixing the impferfection degree $e\in\mathbb{N}\cup\{\infty\}$. Similarly, the theory $\SCVF_p$ of non-trivially valued separably closed fields of characteristic $p$ is NIP, and again fixing the imperfection degree yields a complete theory.

We first work in pure fields and show the following analog of Keisler's result.

   \begin{theoremA}[Theorem~\ref{T:Keisler}]
        For any complete theory $T$ of fields of characteristic $p>0$ we have the following.
        \begin{enumerate}
            \item The completions of $\SCF_T$ are given by $\SCF_{T,e}$, for $e\in\N\cup\{\infty\}$.
        \item For a fixed $e\in\N\cup\{\infty\}$, any good isomorphism between good substructures of models of $\SCF_{T,e}$ is an elementary map
        \item In $\SCF_{T,e}$, $P$ is stably embedded with induced structure a pure field.
        \end{enumerate}
    \end{theoremA}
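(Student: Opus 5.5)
We prove the three parts of Theorem~\ref{T:Keisler} with a single back-and-forth argument, in the spirit of Keisler's proof for $\ACF_T$. We work in the language $\cL_P$ of pairs expanded by the $p$-independence predicates, or equivalently by the $\lambda$-functions, taken both for $K$ over $P(K)$ and for $P(K)$ itself. For $\SCF_{T,e}$ we take the axioms:
\begin{itemize}
\item $K$ is separably closed;
\item $P(K)\models T$;
\item $K/P(K)$ is separable, i.e.\ a $p$-basis of $P(K)$ stays $p$-independent in $K$;
\item $[K:P(K)]=\infty$;
\item the relative imperfection degree of $K$ over $P(K)$ is $e$, i.e.\ $\dim$ of $K$ over $K^pP(K)$ is $p^e$.
\end{itemize}
A \emph{good substructure} $A$ of a model $(K,P(K))$ is a subfield $A$ with the following properties:
\begin{itemize}
\item $A$ is closed under the $\lambda$-functions of $K$, so $K/A$ is separable;
\item $A_P:=A\cap P(K)$ is an elementary substructure of $P(K)$ in the sense of $T$ (equivalently, $A_P\preceq P(K)$);
\item $A$ and $P(K)$ are linearly disjoint over $A_P$;
\item $A$ contains a relative $p$-basis of $K$ over $P(K)$ with the right behaviour, or more simply $AP(K)$ is a separable extension handled via $\lambda$-closure in the pair.
\end{itemize}
A good isomorphism is an isomorphism $f\colon A\to B$ of fields preserving $P$ and the $\lambda$-functions, whose restriction $A_P\to B_P$ is $T$-elementary.

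\textbf{Main step: back-and-forth.} Let $(K,P)$ and $(L,Q)$ be $\kappa$-saturated models of $\SCF_{T,e}$ and $f\colon A\to B$ a good isomorphism between small good substructures. Given $a\in K$ we extend $f$ in three stages.
\begin{enumerate}
\item \emph{Extend on $P$.} Take a small $P_0\preceq P$ with $A_P\subseteq P_0$, chosen so that $A(P_0)$ is again good and $a\in A P_0$ up to separable closure and $\lambda$-closure; for instance close off under the $\lambda$-functions over $P$ countably many times. By $\kappa$-saturation of $Q$ and elementarity of $f{\restriction}A_P$, there is a $T$-elementary $g\colon P_0\to Q$ extending $f{\restriction}A_P$.
\item \emph{Amalgamate.} Linear disjointness of $A$ and $P$ over $A_P$ identifies $A\otimes_{A_P}P_0$ with $A[P_0]$. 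Hence $f\cup g$ extends to an isomorphism of the fraction fields $A P_0\cong B\,g(P_0)$. One must check that the $\lambda$-functions and the property ``$\cdot\in P$'' are preserved. This uses separability of $K/P$: a $p$-basis of $P_0$ together with a relative $p$-basis of $A$ over $A_P$ forms a $p$-basis of $AP_0$, and linear disjointness from $P$ persists.
\item \emph{Add $K$-elements outside $P$.} Adjoin elements of $K$ that are $p$-independent over $K^pP$. Since the relative imperfection degree equals $e$ on both sides and $[K:P]=\infty$, saturation lets us match them with elements of $L$ that are $p$-independent over $L^pQ$, or transcendental over $Q$ with the same quantifier-free pair type. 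Finally take the relative separable closure. This is unique up to isomorphism over the base and lies in $K$ by separable closedness. Linear disjointness from $P$ is preserved, because a separable algebraic extension of a field linearly disjoint from $P$ (with $P$ relatively separably closed in $K$?) stays disjoint. Here we need $P$ to be separably algebraically closed in $K$, which holds since $P(K)\models T$ sits in a pair where $K/P$ is regular. This regularity has to be verified from separability together with the good-substructure condition.
\end{enumerate}

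\textbf{Deriving the three parts.} Part (2) follows from the back-and-forth system. For part (1), the prime field closed off under $\lambda$, with $A_P$ generated trivially, is not automatically good. So we start instead from a small elementary submodel of $T$: with $K\equiv L$ at the level of $T$ via $P(K)\equiv P(L)$, take $A=P_0\preceq P(K)$ and $B$ its $T$-elementary image. This is a good isomorphism, and (2) gives $K\equiv L$. Hence $\SCF_{T,e}$ is complete, and every completion of $\SCF_T$ fixes some $e$. Part (3) follows from (2) by the standard criterion. Given $c\in P^n$ and $D$ definable in $(K,P)$, any two tuples $c,c'$ with the same $T$-type over a small $P_0$ yield, via (1)–(2) of the construction, a good isomorphism fixing a good $A\supseteq P_0$ with $A_P=P_0$ and sending $c\mapsto c'$. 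Therefore $\tp(c/A)$ in the pair is determined by $\tp_T(c/P_0)$. This gives stable embeddedness with pure field induced structure, using definability of types in the stable/NIP-free setting via saturation and a standard compactness argument.

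\textbf{Main obstacle.} The hardest step is the amalgamation in stage~(2) together with the regularity it needs. We must show that the compositum $AP_0$ is good, meaning $\lambda$-closed in $K$ and linearly disjoint from $P$ over $P_0$, and that the isomorphism respects the $\lambda$-functions. I would first prove a lemma: if $K/P$ is separable and $A$ is $\lambda$-closed with $A$ and $P$ linearly disjoint over $A_P\preceq P$, then $P$ is relatively separably closed in $K$ (using $A_P\preceq P$ and separable closedness), and $AP_0$ is again $\lambda$-closed for $P_0\preceq P$. The proof would go via $p$-bases and the criterion that linear disjointness together with $A_P\preceq P$ yields regularity of $A/A_P$.
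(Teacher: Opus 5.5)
\paragraph{Verdict.} The proposal has a genuine gap. Your overall architecture matches the paper's Lemma~\ref{L:Keisler-main} and Theorem~\ref{T:Keisler}: base-change a good substructure to (an elementary substructure of) $P$ using saturation of $T$, adjoin $p$-independent and transcendental elements by saturation followed by separable algebraic closure, then read off (1)--(3). However, the invariant you carry is too weak, and the lemma you propose for what you rightly call the main obstacle is false.

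\paragraph{The missing condition $A\ind^{p}_{A_P}P(K)$.} Your notion of good substructure does not require $A\ind^{p}_{A_P}P(K)$, except perhaps via the vague fourth bullet. In the paper this is half of condition (iv), $K_0\ind^{\ld,p}_{P(K_0)}P(K)$. Your lemma tries to get $\lambda$-closedness of $AP_0$ from $\lambda$-closedness of $A$, $A\ind^{\ld}_{A_P}P$ and $A_P\preceq P$ alone, and this fails. Here is a counterexample.
\begin{itemize}
\item Let $T=\SCF_{p,\infty}$ and let $(K,P)$ be a saturated model of $\SCF_{T,e}$.
\item Take $A_P\preceq P$ small, $t\in P$ $p$-independent over $A_P$, and $s\in K$ transcendental over $P$. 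Set $A:=A_P(s^pt)$.
\item Since $K/P$ and $P/A_P$ are separable, $t\notin K^p[A_P]$. Hence $s^pt\notin K^p[A_P]$, so $K/A$ is separable.
\item Since $s^pt$ is transcendental over $P$, we get $A\cap P=A_P$ and $A\ind^{\ld}_{A_P}P$.
\item But $AP=P(s^p)$ and $\lambda_{0,\emptyset}(s^p)=s\notin P(s^p)$, so $AP$ is not $\lambda$-closed.
\end{itemize}
A $p$-basis of $P$ together with a relative $p$-basis of $A$ over $A_P$ is a $p$-basis of the field $AP$, but it is $p$-dependent in $K$. So the isomorphism in your stage (2) has no reason to respect the $\lambda$-functions. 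What fails is exactly $A\ind^{p}_{A_P}P$, since $s^pt\in K^p[P]$.

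Once this condition is part of goodness, Lemma~\ref{L:product} (via Lemma~\ref{L:Abstract-ind}) shows that $K/AP$ is separable and that the base change is again good. For $A_P\subseteq P_0\preceq P$, $AP/AP_0$ is then separable because $P/P_0$ is and $AP_0\ind^{\ld}_{P_0}P$. In turn, stage (3) must send tuples $p$-independent over $AP$ (not merely over $P$) to tuples $p$-independent over the image of $A$ together with $Q$. This is what Claims~\ref{Cl:1} and~\ref{Cl:2}(ii) in the proof of Lemma~\ref{L:Keisler-main} arrange.

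\paragraph{The regularity claim in stage (3).} The regularity you invoke is also wrong. Since $K$ is separably closed, $P^{\mathrm{sep}}\subseteq K$. So $K/P$ is regular (equivalently, $P$ is relatively separably closed in $K$) only if $P$ is separably closed. This fails whenever the models of $T$ are not separably closed, e.g.\ for $T=\Th(\Fp)$. Likewise $A/A_P$ need not be regular, since an elementary substructure of the pair contains $A_P^{\mathrm{sep}}$.

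The correct input, used in the proof of Claim~\ref{Cl:2}, is regularity of $P$ over $A_P$ (resp.\ of $Q$ over the image of $A_P$). You do have this, because $A_P\preceq P$. The argument runs as follows. The new field is algebraic over $A(\bar a)$, with $\bar a$ algebraically independent over $AP$, and $A\ind^{\ld}_{A_P}P$. Hence the new field is algebraically independent from $P$ over $A_P$. Together with regularity of $P/A_P$, this yields linear disjointness.

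\paragraph{Harmless deviations.} You require $A_P\preceq P$ instead of separability of $P/A_P$ plus elementarity of the map on $A_P$. Consequently you start (1) from some $P_0\preceq P(K)$ rather than from $\Fp$. Both changes are fine, and your argument for (3) via maps $\id_A\cup g$ is the paper's.
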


    Here, $\SCF_T$ denotes the theory of separable pairs $(K,P(K))$ with $P(K)\models T$, $K=K^{\mathrm{sep}}$ and $[K:P(K)]=\infty$. In $\SCF_{T,e}$ one requires in addition that the relative imperfection degree of $K$ over $P(K)$ equals $e$.

Our main work is then done in the valued category. As a key technical result we establish an Embedding Lemma (Lemma~\ref{lem:Embedding}) for $vs$-defectless separable pairs. We first deduce from it a general result on $vs$-defectless separable (not necessarily elementary) pairs of models of $\SCVF_{p}$ (Theorem~\ref{thm:SCVF-Pair-Complete}), before we specialize to elementary pairs, thus obtaining our main result on beautiful pairs in $\SCVF_{p,e}$, for finite and infinite $e$ (Theorem~\ref{thm:beauty-SCVF}), whose proof is structurally very similar to the proof from \cite{CuHiYe26} in the case of $\ACVF$. As a consequence, we obtain the following two main results of the paper, which were the initial motivation for our work.

\begin{theoremB}[Corollary~\ref{cor:completions-SE}]
Let $(\SCVF_{p,e})_{\SE}$ be the $\cL_P$-theory of stably embedded elementary pairs of models of $\SCVF_{p,e}$. 
\begin{enumerate}
    \item If $e<\infty$, $(\SCVF_{p,e})_{\SE}$ has 8 completions, namely the theory of the trivial pair and the theories $\SCVF_{p,(e,0)}(T_{\RF},T_{\VG})$, for $T_{\RF}$ and $T_{\VG}$ not both trivial. Each completion corresponds to the theory of $\cK$-beautiful pairs for some natural class $\cK$ having AP, EP and beauty transfer.
    \item The completions of $(\SCVF_{p,e})_{\SE}$ are given by the theory of the trivial pair and by the theories $\SCVF_{p,(\infty,e_P)}(T_{\RF},T_{\VG})$, for $e_P\in\mathbb{N}\cup\{\infty\}$, and $T_{\RF}$ and $T_{\VG}$ not both trivial. Each completion corresponds to the theory of $\cK$-beautiful pairs for some natural class $\cK$ having AP, EP and beauty transfer.
\end{enumerate}
\end{theoremB}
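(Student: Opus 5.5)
Theorem B will be derived from the main structural result, Theorem~\ref{thm:beauty-SCVF}, in the same way as the corresponding statement for $\ACVF$ in \cite{CuHiYe26}. The first step is to reduce classifying completions of $(\SCVF_{p,e})_{\SE}$ to classifying the invariants of a pair $(M,P(M))$. If $P(M)\preccurlyeq M$ is stably embedded and the pair is non-trivial, then:
\begin{itemize}
\item $M/P(M)$ is a separable extension, since $P(M)\preccurlyeq M$ in $\SCVF$ means $M$ is separable over $P(M)$ (a $p$-basis of $P(M)$ stays $p$-independent in $M$);
\item the pair is $vs$-defectless; for stably embedded elementary pairs of henselian fields this should follow as in \cite{CuHiYe26}, via the characterization of stable embeddedness by definability of types;
\item the induced residue field pair $(\RF(M),\RF(P(M)))$ is a stably embedded pair of models of $\ACF_p$;
\item the induced value group pair is a stably embedded pair of models of $\DOAG$.
\end{itemize}
Theorem~\ref{thm:SCVF-Pair-Complete} (or the Embedding Lemma~\ref{lem:Embedding} directly) then shows that the $\cL_P$-theory of such a pair is determined by three data: the theory $T_{\RF}$ of the residue pair, the theory $T_{\VG}$ of the value group pair, and the relative imperfection degree of $M$ over $P(M)$.

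The second step is to count these invariants.

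For the residue field pair there are two choices by Robinson's result: trivial, or a proper pair of $\ACF_p$.

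For the value group pair, \cite{CuHiYe26} gives four completions of $\DOAG_{\SE}$, according to whether infinitesimals and/or infinite elements over $P$ exist. One of these is the trivial pair.

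For the imperfection data, suppose first $e<\infty$. A $p$-basis of $P(M)$ has $e$ elements and stays $p$-independent in $M$, whose imperfection degree is also $e$. Hence it is a $p$-basis of $M$, and the relative imperfection degree is $0$. This gives $2\cdot4=8$ combinations. The one with both $T_{\RF}$ and $T_{\VG}$ trivial must be the trivial pair $M=P(M)$: a proper extension with trivial residue and value group extensions would be immediate, contradicting $vs$-defectlessness for a stably embedded pair. So the trivial pair together with the $2\cdot4-1=7$ theories $\SCVF_{p,(e,0)}(T_{\RF},T_{\VG})$ gives $8$ completions, proving (1).

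For $e=\infty$ the relative imperfection degree $e_P$ can be any value in $\N\cup\{\infty\}$, which gives the family in (2).

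The third step is existence together with the identification with $\cK$-beautiful pairs. For each admissible triple of invariants, I would choose the natural class $\cK$ as in \cite{CuHiYe26}:
\begin{itemize}
\item definable types orthogonal to $\VG$, when the value group pair is trivial;
\item bounded definable types, when there are no infinite elements;
\item all definable types, and the remaining combinations obtained by intersecting these conditions;
\item in case (2), additionally the constraint on the relative imperfection degree $e_P$.
\end{itemize}
Theorem~\ref{thm:beauty-SCVF} should then give that $\cK$ has AP and EP, that $\cK$-beautiful pairs exist, that their theory is axiomatized by the stated invariants, and that beauty transfer holds. Since each of these pairs is stably embedded and realizes the given invariants, every listed theory is consistent and is a completion.

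The main obstacle is the first step: showing that every non-trivial stably embedded elementary pair is automatically $vs$-defectless, so that Theorem~\ref{thm:SCVF-Pair-Complete} applies to all of them. I would try first to use definability of $\tp(a/P(M))$ for finite tuples $a$ from $M$. Definable types over a model of $\SCVF$ should generate extensions whose finitely generated separable subextensions are defectless, via the $\ACVF$ result applied in the algebraic closure together with separability.

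A second delicate point is realizing every $e_P\in\N\cup\{\infty\}$ in case (2) by a beautiful pair with beauty transfer. Here I would rely on the case distinction built into Theorem~\ref{thm:beauty-SCVF}.
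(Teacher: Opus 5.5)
Your overall architecture is the paper's. The paper proves Theorem B simply by combining Proposition~\ref{lem:domination} with Theorem~\ref{thm:beauty-SCVF}, and the pieces of your count are correct:
\begin{itemize}
\item the relative imperfection degree is $0$ when $e<\infty$;
\item there are $2\cdot 4$ residue field/value group combinations;
\item both-trivial invariants force the trivial pair, since an immediate $vs$-defectless extension is trivial;
\item deriving that each listed theory is a completion of $(\SCVF_{p,e})_{\SE}$ from the existence of $\cK$-beautiful pairs is legitimate, because these lie in $\cK\subseteq\cK_{\Def}$.
\end{itemize}

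The weak point is the step you flag as the main obstacle. Within the paper it is not open. Proposition~\ref{lem:domination}(1), applied with $K=P(M)$ and $L=M$, says exactly that $\tp(M/P(M))$ is definable if and only if $\VF(M)/\VF(P(M))$ is $vs$-defectless and the value group type is definable. That gives $vs$-defectlessness for every stably embedded elementary pair, and you should simply cite it.

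The route you sketch instead has a real hole. To use the $\ACVF$ characterization over $K^{\alg}$, you would need $\tp_{\cLgk}(a/K^{\alg})$ to be definable in $\ACVF$. Definability of $\tp_{\cL}(a/K)$ in $\SCVF$ gives no direct handle on this. The $\SCVF$-definitions may involve $K^p$ and the $\lambda$-functions, and such sets are not traces on $K$ of $\ACVF$-definable subsets of $K^{\alg}$. $\ACVF$-definability over $K^{\alg}$ can only be recovered a posteriori, once $vs$-defectlessness is already known.

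The missing idea is the separable substitute for maximally complete models, which the paper uses as follows:
\begin{itemize}
\item It constructs an immediate elementary extension $K_1\succcurlyeq K$ that is separably maximally complete.
\item A realization $L$ of the definable type over $K_1$ satisfies $\VF(L)\ind^{\ld,p}_{\VF(K)}\VF(K_1)$. So the compositum is separable over $\VF(K_1)$, and base change preserves dimensions.
\item Proposition~\ref{prop:SepMaxima->separated} then yields a $\VF(K_1)$-valuation basis of the base-changed space.
\item The existence of a valuation basis is transferred back down to $K$ using definability of the type.
\end{itemize}
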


In \cite{HKR2016}, given a definable set $X$ in $\SCVF_{p,e}$ for finite $e$, strict pro-definability of the space $\widehat{X}$ of all generically stable types on $X$ is established, using an abstract version of the proof of the corresponding result for $\ACVF$, which plays a crucial role in the work \cite{HL} by Hrushovski-Loeser. The following is a vast generalization of the result from \cite{HKR2016}.

\begin{theoremC}[Corollary~\ref{Cor:strictpro}]
    Let $\cK=\cK_{p,(e,e_P)}(\cK_{\VG},\cK_{\RF})$ be a natural class of definable types in the theory $\SCVF_{p,e}$, and let $X$ be an $A$-definable set, where  $A\subseteq\cU\models\SCVF_{p,e}$. Then the space $\cF_X$ carries naturally the structure of a strict $A$-pro-definable set. In particular, the space $S^{\DE}_{X}$ of all definable types, the space $\widetilde{X}$ of all bounded definable types and the space $\widehat{X}$ of all generically stable definable types are strict pro-definable.\qedhere
\end{theoremC}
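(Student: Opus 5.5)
The plan is to obtain Theorem C from the general machinery of \cite{CuHiYe26}, which turns beauty transfer into strict pro-definability. I would not work with the spaces of definable types directly.

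Fix the natural class $\cK=\cK_{p,(e,e_P)}(\cK_{\VG},\cK_{\RF})$. By Theorem~\ref{thm:beauty-SCVF} and Corollary~\ref{cor:completions-SE}, $\cK$ has AP and EP, and the theory $T_{\cK}$ of $\cK$-beautiful pairs has beauty transfer. I would then invoke the abstract criterion of \cite{CuHiYe26}: for a natural class $\cK$ with AP and EP in a complete theory $T$, beauty transfer for $\cK$-beautiful pairs is equivalent to uniform definability of $\cK$-types together with the property that $\cF_X$ is strict pro-definable. This is the analogue of Poizat's nfcp equivalence.

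The mechanism behind that criterion should be checked in the present setting, since it is where the real content lies.
\begin{enumerate}
\item Every $p\in\cF_X$ is realized over a small model $M$ in a $\cK$-beautiful pair $(N,M)$.
\item By beauty transfer, the $\cL$-formula $\phi(x,y)$ gives a type whose $\phi$-definition, i.e. the set $\{b\in P:\ \phi(a,b)\}$, is $\cL_P$-definable uniformly in $a$.
\item Since $P$ is stably embedded with pure induced structure, that set is an $\cL$-definable subset of $P$. Compactness, via the saturation supplied by beauty transfer, then gives a uniform $\cL$-formula $d\phi(y,z)$ whose instances cover all $\phi$-definitions of types in $\cF_X$.
\item This presents $\cF_X$ as a pro-definable set, namely the projective limit over finite sets of formulas of the codes of the definitions.
\item Strictness comes from the fact that the image of $\cF_X$ in each finite level is $\cL_P$-type-definable and invariant under automorphisms of $P$. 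Beauty transfer makes it definable in $P$, hence $\cL$-definable.
\end{enumerate}
Working over $A$ amounts to naming parameters, so strict $A$-pro-definability follows from the parameter-free version applied to $X$ in $T(A)$, provided the class is closed under this. I would cite the relevant lemma of \cite{CuHiYe26}.

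For the ``in particular'' part, I would identify the three spaces with specific natural classes.
\begin{enumerate}
\item $S^{\DE}_X$ is the class of all definable types. Here $\cK_{\VG}$ and $\cK_{\RF}$ are all definable types in DOAG and in ACF, and $e_P$ is arbitrary, which is needed when $e=\infty$.
\item $\widetilde{X}$ corresponds to bounded definable types in $\VG$, i.e. no infinite elements over $P$.
\item $\widehat{X}$ corresponds to types orthogonal to $\VG$, i.e. the value group pair is trivial. This uses the fact that in $\SCVF$ the generically stable types are exactly those orthogonal to $\VG$, which I would deduce as in ACVF via $\VG$-orthogonality, or cite.
\end{enumerate}

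The main obstacle I expect is checking that the hypotheses of the abstract criterion hold here, in particular that $\cK$ is a natural class closed under the needed operations, including products and definable images, when $e=\infty$ and $e_P$ varies. The point to verify is that ``relative imperfection degree $e_P$'' is preserved under the constructions used, such as taking definable closures of realizations. My first approach would be to express membership in $\cK$ through the valued-field data together with the $\lambda$-function data of the pair. I would then use the Embedding Lemma~\ref{lem:Embedding} to see that this data is determined by the restrictions to $\RF$, to $\VG$ and to the $p$-independence structure.
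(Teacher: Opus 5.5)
Your proposal is correct and follows the paper's route. Theorem~\ref{thm:beauty-SCVF} gives AP and beauty transfer for $\cK$, and Fact~\ref{fact:main-beauty}(2) then yields strict $A$-pro-definability of $\cF_X$ directly, so you need not re-derive the mechanism or pass to $T(A)$. The lemma preceding Theorem~\ref{thm:beauty-SCVF} identifies the three special spaces; for $S^{\DE}_X$ with $e=\infty$ this means taking $e_P=\infty$, not an arbitrary $e_P$, since a finite $e_P$ only captures definable types whose realizations have relative imperfection degree at most $e_P$.

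Do not argue ``as in ACVF'' for the equality of generically stable and $\VG$-orthogonal types when $e=\infty$, because stable domination is not available there (cf.\ Conjecture~\ref{Conj:stdom}). Instead cite that lemma's moreover part: there the paper uses Proposition~\ref{lem:domination}(2) to get $p(x)\otimes p(y)=p(y)\otimes p(x)$ from the symmetry of $\ind^p$ and of independence in $\RF$, and concludes by NIP.
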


These two results give a complete answer  -- in the affirmative -- to \cite[Question~9.3.6]{CuHiYe26}, and this even in arbitrary degree of imperfection.

\subsection*{Structure of the article} 
In Section~\ref{S:prelim-sep} we provide preliminaries on separable field extensions and the model theory of separably closed fields. Section~\ref{S:Basic-SCVF} is devoted to basic results on separably closed valued fields and their model theory, and in Section~\ref{S:beauty} we quickly present the formalism of beautiful pairs from \cite{CuHiYe26}, with particular emphasis on the case of $\ACVF$. The separable analogue of Keisler's result is then treated in Section~\ref{S:Keisler}. 

The last three sections deal with $vs$-defectless pairs in $\SCVF$. In Section~\ref{S:Dom-SCVF} we prove a domination result and deduce from it an algebraic characterization of definable types in $\SCVF$, while the study of separable $vs$-defectless pairs of models of $\SCVF$ is the content of Section~\ref{S:sep-vs-defectless}. Finally, we show in Section~\ref{S:SCVF-BP} our main result on beautiful pairs in $\SCVF$ and deduce from it Theorem~B and Theorem~C..

\subsection*{Acknowledgements} A substantial portion of the results of this article was obtained in the master thesis of the first author at the University of M\"unster, supervised by the second author. The second author would like to thank Silvain Rideau-Kikuchi for helpful discussions.

\section{Preliminaries on separable field extensions and separably closed fields}\label{S:prelim-sep}

\subsection*{Notation and conventions}We use standard notations from model theory, as may be found, e.g., in \cite{tent-ziegler}. 

Given fields $K\subseteq L,M\subseteq\cU$, we write $L\ind_K^{ld}M$ when $L$ and $M$ are \emph{linearly disjoint} over $K$, i.e., when any $K$-linearly independent tuple from $L$ is $M$-linearly independent. 

We now fix a prime $p$ and consider fields of characteristic $p$.

\begin{itemize}
    \item For a field $K$ and $n\in\mathbb{Z}$ we set $K^{p^n}:=\{x^{p^n}\mid x\in K\}$, which is a subfield of $K^{\alg}$. We set $K^{p^\infty}:=\bigcap_{n\geq0} K^{p^n}$ and $K^{p^{-\infty}}:=\bigcup_{n\geq0} K^{p^{-n}}$.
    \item A field extension $L/K$ is called \emph{separable} if $L^p\ind^{ld}_{K^p}K$ (inside $L$), equivalently if $L\ind^{ld}_KK^{p^{-\infty}}$ (inside $L^{\alg}$).
    \item For $A\subseteq K$, we set $\cl_K^p(A):=K^p(A)=K^p[A]$, called the \emph{$p$-closure of $A$ in $K$}. It is well known that $\cl_K^p$ defines a pregeometry on $K$, in particular the Steinitz Exchange Lemma holds for $\cl_K^p$. We thus get corresponding notions of \emph{$p$-independence}, \emph{$p$-basis} and \emph{$p$-dimension}. For $A,B,C\subseteq K$ we write $A\ind^p_BC$ if $A$ is $p$-independent from $C$ over $B$ (inside $K$).
    \item The \emph{imperfection degree} of $K$ is given by
    \[e(K):=\begin{cases}
        n & \,\text{if $K$ has a finite $p$-basis of cardinality $n$}\\
\infty & \, \text{else}\end{cases}\]
For an extension $K/K_0$, the relative imperfection degree $e(K/K_0)$ is similarly defined.
\item If $\mathbf{b}=(b_,\ldots,b_n)$ is a $p$-independent tuple from $K$ and $a\in\cl_K^p(\{b_1,\ldots,b_n\})=K^p[b_1,\ldots,b_n]$, it follows from the definitions that $a$ can be uniquely written as 
\begin{equation}\label{eq:lambda}
a=\sum_\nu a_\nu^p\mathbf{b}^\nu,    
\end{equation}
where $a_\nu\in K$, $\nu$ runs over $p^n=\{0,\ldots,p-1\}^n$ and $\mathbf{b}^\nu:=\prod_ib_i^{\nu_i}$.
\item Let $\cLr^{\lambda}:=\cLr\cup\{\lambda_{n,\nu}\mid n\geq0, \nu\in p^n\}$, where $\lambda_{n,\nu}$ is an $(n+1)$-ary function symbol. Using the uniqueness observed in the previous item, one may turn any field $K$ of characteristic $p$ into an $\cLr^{\lambda}$-structure by defining 
\[\lambda_{0,\emptyset}(a):=\begin{cases}
 a^{1/p}& \,\text{if $a\in K^p$}\\
 0 & \,\text{else}
\end{cases}\]
and
\[\lambda_{n,\nu}(a;\mathbf{b}):=\begin{cases}
 0 & \, \text{if $\mathbf{b}$ is not $p$-independent or $a\not\in\cl_K^p(\{b_1,\ldots,b_n\})$}\\
 a_\nu& \,\text{if $\mathbf{b}$ is $p$-independent and $a=\sum_\mu a_\mu^p\mathbf{b}^\mu$ as in \eqref{eq:lambda}}
\end{cases}\]
for $n>0$ and $\nu\in p^n$. The functions $\lambda_{n,\nu}$ are called the \emph{parametrized $p$-coordinate functions}. 
\end{itemize}

\begin{fact}\label{F:characterize-sep-ext}
    For an extension $L/K$ of fields of characteristic $p$, the following are equivalent:
    \begin{enumerate}
        \item $L/K$ is separable.
        \item There exists a $p$-basis of $K$ which is $p$-independent in $L$.
        \item Any tuple from $K$ which is $p$-independent in $K$ is $p$-independent in $L$.
        \item If $K$ and $L$ are considered as $\cLr^{\lambda}$-structures, then $K$ is a substructure of $L$.
        \item For every field $L_0$ with $K\subseteq L_0\subseteq L$ and $L_0/K$ finitely generated there exists a transcendence basis $T$ of $L_0$ over $K$ such that $L_0/K(T)$ is separable algebraic.
    \end{enumerate}
\end{fact}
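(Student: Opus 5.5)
We argue in the cycle (1)$\Rightarrow$(3)$\Rightarrow$(2)$\Rightarrow$(1), then (3)$\Leftrightarrow$(4), and treat (5) separately via MacLane's criterion. The unifying tool is the definition of separability as linear disjointness $L^p\ind^{ld}_{K^p}K$, together with the fact that $p$-independence of a tuple $\mathbf b$ from $K$ in $K$ means exactly that the monomials $\mathbf b^\nu$, $\nu\in p^n$, are $K^p$-linearly independent. This holds because $K^p[\mathbf b]$ is spanned over $K^p$ by these monomials, and it has degree $p^n$ over $K^p$ iff $\mathbf b$ is $p$-independent.

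For (1)$\Rightarrow$(3), take $\mathbf b$ from $K$ that is $p$-independent in $K$. Then the $\mathbf b^\nu$ are $K^p$-linearly independent, so by linear disjointness they are $L^p$-linearly independent, i.e. $\mathbf b$ is $p$-independent in $L$. For (3)$\Rightarrow$(2), any $p$-basis $B$ of $K$ works, since $p$-independence is of finite character. For (2)$\Rightarrow$(1), let $B$ be a $p$-basis of $K$ that is $p$-independent in $L$. The monomials $B^\nu$ (finite exponent vectors $<p$) form a $K^p$-basis of $K$, and they are $L^p$-linearly independent. Any $K^p$-linearly independent family in $K$ spans a subspace of the span of finitely many $B^\nu$; a change of basis over $K^p\subseteq L^p$ then shows it remains $L^p$-independent. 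So $L^p\ind^{ld}_{K^p}K$.

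For (3)$\Leftrightarrow$(4), being a substructure means that $K$ is closed under the $\lambda_{n,\nu}$ computed in $L$, and that these agree with those of $K$. Assuming (3), let $\mathbf b$ and $a$ lie in $K$. If $\mathbf b$ is $p$-dependent in $L$, it is $p$-dependent in $K$ by (3), so both values are $0$. If $\mathbf b$ is $p$-independent in $L$ and $a\in K^p[\mathbf b]$, the $K$-decomposition is also an $L$-decomposition, so uniqueness gives agreement. If $a\notin K^p[\mathbf b]$ but $a\in L^p[\mathbf b]$, then $\mathbf b$ is $p$-independent in $K$ while $\mathbf b,a$ is $p$-dependent in $L$, contradicting (3). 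The case $\mathbf b$ independent in $L$ but dependent in $K$ also contradicts (3). Conversely, assume (4) and suppose $\mathbf b$ is $p$-independent in $K$ but not in $L$. Choose a maximal $p$-independent (in $L$) subtuple $\mathbf b'$ and some $b_i\in L^p[\mathbf b']$ outside it. Then $\lambda^L$ expresses $b_i=\sum c_\mu^p\mathbf b'^\mu$ with $c_\mu\in K$ by closure, so $b_i\in K^p[\mathbf b']$, a contradiction. Here the tuple $\mathbf b'$ must be $p$-independent in both fields, which follows since independence in $L$ implies independence in $K$.

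For (5) I would quote MacLane's criterion: a finitely generated $L_0/K$ is separably generated iff $L_0^p\ind^{ld}_{K^p}K$. Since linear disjointness is checked on finitely generated subextensions, (1) and (5) are then equivalent. I expect this step to be the main obstacle if proved from scratch. The direction (1)$\Rightarrow$(5) needs the classical argument of extracting a separating transcendence basis by induction on the number of generators, using a minimal polynomial in which some variable occurs with exponent not divisible by $p$. Since this is textbook material (Lang, Algebra VIII.4), I would cite it rather than reprove it.
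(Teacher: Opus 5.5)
Your proposal is correct and takes the same route as the paper, which simply declares the equivalences among (1)--(4) and the implication (5)$\Rightarrow$(1) straightforward and cites Mac Lane for (1)$\Rightarrow$(5); you write out the routine parts and cite Mac Lane's criterion in the same way. One cosmetic fix: in the case $a\in L^p[\mathbf b]\setminus K^p[\mathbf b]$ of (3)$\Rightarrow$(4), the contradiction with (3) comes from the tuple $(\mathbf b,a)$ being $p$-independent in $K$ (by exchange, since $a\notin K^p[\mathbf b]$), not from $\mathbf b$ alone.
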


\begin{proof}
    The equivalences of (1), (2), (3) and (4) are straight forward, as is (5)$\Rightarrow$(1). The non-trivial implication (1)$\Rightarrow$(5) is due to Mac Lane \cite{Mac39}.
\end{proof}

The following observation is useful when passing from a subring to its field of fractions.

\begin{lemma}\label{L:lambda-subfield}
    Let $K$ be a field of characteristic $p$ and $R\subseteq K$ an $\cLr^{\lambda}$-substructure of $K$ such that $R\not\subseteq K^{p^{\infty}}$. Then $R$ is a field.
\end{lemma}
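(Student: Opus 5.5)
We plan to show directly that every nonzero $a\in R$ has $a^{-1}\in R$. The idea is to produce $a^{-1}$ as a value of a parametrized $p$-coordinate function, using an auxiliary element of $R$ that is not a $p$-th power. First we reduce to the case where $a$ is a $p$-th power of an element of $R$. Then we handle that case with the function $\lambda_{1,(1)}$.

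\emph{Step 1: $R$ contains an element $b\notin K^p$.} By hypothesis there is $b_0\in R\setminus K^{p^\infty}$, so $b_0\notin K^{p^n}$ for some $n\geq 0$. Assume towards a contradiction that $R\subseteq K^p$. For $x\in R\subseteq K^p$ we have $\lambda_{0,\emptyset}(x)=x^{1/p}\in R$. Iterating, $b_0^{1/p^m}\in R$ for all $m$. Since $b_0^{1/p^{n}}$ would have to lie in $R\subseteq K^p$, we would get $b_0\in K^{p^{n+1}}$, and by induction $b_0\in K^{p^m}$ for all $m$. This contradicts $b_0\notin K^{p^\infty}$. So we may fix $b\in R\setminus K^p$.

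\emph{Step 2: reduction to $p$-th powers.} Let $a\in R$ be nonzero. Since $a^{-1}=a^{p-1}\cdot(a^p)^{-1}$ and $R$ is a ring, it suffices to show $(a^p)^{-1}\in R$. Put $a':=a^p$. Then $a'=c^p$ with $c:=a\in R$ and $c\neq 0$.

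\emph{Step 3: the case $a'=c^p$ with $0\neq c\in R$.} Consider $d:=a'b=c^pb\in R$. Since $c^p\in K^p\setminus\{0\}$ and $b\notin K^p$, we have $d\notin K^p$, so the $1$-tuple $(d)$ is $p$-independent in $K$. Moreover $b=(c^{-1})^p\,d$ lies in $K^p[d]$. By the uniqueness in \eqref{eq:lambda}, this is the $p$-coordinate expansion of $b$ with respect to $(d)$, whose only nonzero coordinate is the one at $\nu=(1)$. Hence $\lambda_{1,(1)}(b;d)=c^{-1}$, and this element lies in $R$ because $R$ is an $\cLr^{\lambda}$-substructure. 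Therefore $(a')^{-1}=(c^{-1})^p\in R$. Combined with Step 2, this gives $a^{-1}\in R$, so $R$ is a field.

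The only genuine obstacle is Step 1. The hypothesis only excludes $R\subseteq K^{p^\infty}$, not $R\subseteq K^p$, and the $\lambda_{0,\emptyset}$ iteration is exactly what bridges this gap.
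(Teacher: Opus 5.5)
Your proof is correct and follows essentially the paper's route. Both arguments first iterate $\lambda_{0,\emptyset}$ to obtain some $b\in R\setminus K^p$, and then invert using $\lambda_{1,(1)}$ with a $p$-independent parameter of the form (nonzero $p$-th power)$\cdot b$. The only difference is cosmetic: since $c=a$, your Step~3 already yields $a^{-1}=\lambda_{1,(1)}(b;a^pb)\in R$ for every nonzero $a$, so Step~2 is superfluous. The paper instead first inverts $a_0\notin K^p$ via $\lambda_{1,(1)}(a_0;a_0^{p+1})$, which is your formula with $a=b=a_0$, and then handles elements of $K^p$ by multiplying with $a_0$.
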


\begin{proof}
If $R\subseteq K^p$, iteratively applying $\lambda_{0,\emptyset}$ would yield $R\subseteq K^{p^{\infty}}$, contradicting the assumption.
Thus we have $R\not\subseteq K^p$. Let $a_0\in R\backslash K^p$, and $\nu:1\to p$, $0\mapsto 1$. Then $\lambda_{1,\nu}(a_0;a_0^{p+1})=1/a_0\in R$. It follows that $R$ is a field, as for $b\in R\cap K^p$, we have $a_0\cdot b\in R\backslash K^p$ and thus $1/b=a_0\cdot\frac{1}{a_0\cdot b}\in R$.   
\end{proof}

For $e\in\mathbb{N}\cup\{\infty\}$ we denote by $\SCF_{p,e}$ the $\cLr$-theory of separably closed fields of characteristic $p$ and degree of imperfection $e$. By abuse of notation, we also write $\SCF_{p,e}$ for the corresponding $\cLr^{\lambda}$-theory, which is a definitional expansion. The following result is classical.

\begin{fact}\label{F:QE-SCF}
The theory $\SCF_{p,e}$ has quantifier elimination in $\cLr^{\lambda}$ and is complete. It is stable, and it is non-superstable if and only if $e>0$.
\end{fact}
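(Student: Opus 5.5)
The plan is the standard back-and-forth argument, with the $\lambda$-functions absorbing the $p$-th root structure. \textbf{Quantifier elimination} is the main task; completeness and stability then follow.

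For QE I use the embedding criterion. Let $K, L \models \SCF_{p,e}$ with $L$ $|K|^+$-saturated. Let $A \subseteq K$ be an $\cLr^{\lambda}$-substructure and $f\colon A\to L$ an embedding. I want to extend $f$ to an embedding of $K$ into $L$.

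\begin{itemize}
\item \emph{Reduction to fields.} If $A\subseteq K^{p^\infty}$, then $A$ generates a perfect subring whose fraction field is perfect, so it sits in $K^{p^\infty}$ and the extension of $f$ is harmless. Otherwise Lemma~\ref{L:lambda-subfield} shows that $A$ is already a field.
\item \emph{Separability.} Since $A$ is a $\lambda$-substructure of $K$, Fact~\ref{F:characterize-sep-ext}(4) gives that $K/A$ is separable. Likewise $L/f(A)$ is separable.
\item \emph{Completing a $p$-basis.} Extend a $p$-basis of $A$ to a $p$-basis $B$ of $K$. Using saturation, choose in $L$ elements that are $p$-independent over $f(A)$ to serve as images of $B\setminus A$; this is possible because $e(L)=e(K)$. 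When $e$ is finite, separability already forces $e(A)\le e$, and the count of new basis elements matches, since a $p$-basis of $A$ remains $p$-independent in both $K$ and $L$. The resulting field map $A(B)\to L$ stays $\lambda$-compatible, by uniqueness of coordinates and algebraic independence, and both extensions are separable.
\item \emph{Reaching $K$.} With the $p$-basis fixed, $K$ is separable algebraic over $A(B)^{\,}$ closed under $p$-th roots of coordinates. Concretely, $K$ is the separable closure of $\bigcup_n A(B)$-coordinates, i.e.\ the separable closure of the field generated by iterated $\lambda$-images.
\item \emph{The extension step.} Each $\lambda$-image is determined: it equals $\lambda$ of elements already mapped, so these maps extend uniquely. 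Separable algebraic extensions embed into the separably closed field $L$. To make this precise, I work with the $p$-independent transcendentals handled by saturation and the remaining part by a realization of quantifier-free types in $L$.
\end{itemize}

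\textbf{Main obstacle.} The delicate step is the transcendental part when $e=\infty$ or when $B$ is infinite. Each new element of $K$ transcendental over the current subfield must be realized in $L$ with the same $\lambda$-type. I will handle this by an $\omega$-chain: at each stage I close under $\lambda$ and separable algebraic closure. The quantifier-free type of a new element is determined by its coordinates relative to the fixed $p$-basis, and saturation realizes the corresponding partial types.

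\textbf{Completeness.} The prime field $\mathbb F_p$ is a common substructure of all models, since $\lambda$ vanishes appropriately on it. QE then yields completeness.

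\textbf{Stability.} Using QE, I count types: a type over $M$ is determined by the $\lambda$-closure of $Ma$, which is a countably generated field extension. This gives at most $|M|^{\aleph_0}$ types, so the theory is stable. For non-superstability when $e>0$, I use a $p$-basis element $b$ and the infinite descending chain of definable subgroups $K^{p^n}$. Each has infinite index, because $K^{p^n}\supsetneq K^{p^{n+1}}$ with index $p^{e}$ or more as additive groups, in fact an infinite chain of $\lambda$-coordinates. This yields $2^{\aleph_0}$ types over a countable set, branching along the chain, so the theory is not superstable. When $e=0$ we have $\ACF$, which is $\omega$-stable.
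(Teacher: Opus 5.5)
Your overall strategy (embedding test, reduction to fields, adjoining a $p$-basis) is sound. However, the quantifier-elimination argument has a genuine gap exactly where the work lies.

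The bullet ``Reaching $K$'' is false. You have just shown that $K/A(B)$ is separable, so $A(B)$ is already closed under all $\lambda_{n,\nu}$ (Fact~\ref{F:characterize-sep-ext}(4)). Closing under coordinates therefore adds nothing, and $K$ is usually not separable algebraic over $A(B)$: take $e=0$, $A=\F_p$, $B=\emptyset$ and $K$ of positive transcendence degree. So your ``extension step'' is empty. Everything rests on what you call the main obstacle: extending $f$ from a subfield $F\supseteq A(B)$ with $K/F$ separable to the $\lambda$-closure $F\langle c\rangle$ of $F\cup\{c\}$, for $c$ transcendental over $F$. There you only say that saturation realizes the relevant partial type. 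But saturation applies only once that type is known to be finitely satisfiable in $L$, and this is precisely what must be shown.

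One way to do it:
\begin{itemize}
\item Let $(c_s)_s$ be the tree of iterated coordinates of $c$ with respect to $B$, where each node uses a finite tuple $\mathbf{b}_s$ from $B$. Then $F\langle c\rangle=F(c_s: s)$, which has $B$ as a $p$-basis.
\item Since $c_s=\sum_\nu c_{s\nu}^p\mathbf{b}_s^{\nu}$, the field $F_n=F(c_s:|s|=n)$ contains all lower levels. By uniqueness of coordinates, plus the rank criterion for $p$-independence, $\qftp(c/F)$ is determined by the ring types over $F$ of the level-$n$ tuples.
\item Each $F_n/F$ is finitely generated and separable, since it lies inside $K/F$. By Mac Lane it has a separating transcendence basis, so it embeds over $f$ into the separably closed saturated field $L$.
\item Define $d\in L$ from the images of the level-$n$ coordinates via $f(\mathbf{b}_s)$. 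Because $f(B)$ is $p$-independent in $L$, the $\lambda$-coordinates of $d$ are exactly those images. This gives finite satisfiability.
\item Saturation then yields $d$ realizing $f(\qftp(c/F))$. Finally, $L/f(F)\langle d\rangle$ is separable because $f(B)$ is a $p$-basis of $f(F)\langle d\rangle$.
\end{itemize}
The paper quotes this Fact as classical and gives no proof. Its proof of Proposition~\ref{prop:QE-SCVF} uses a slicker route you could adapt instead. Realize in $L$ the field diagram of $K$ over $A$, together with the requirement that tuples $p$-independent over $A$ in $K$ stay $p$-independent over $f(A)$ in $L$. The resulting field embedding $g$ has $L/g(K)$ separable, so it is automatically an $\cLr^{\lambda}$-embedding.

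There are two smaller problems.

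First, in the non-superstability argument, $2^{\aleph_0}$ types over a countable set only refutes $\omega$-stability. The right ingredient is that $K\supsetneq K^p\supsetneq K^{p^2}\supsetneq\cdots$ is a chain of definable additive subgroups, each of infinite index in its predecessor. This holds because $K^{p^n}$ is a $K^{p^{n+1}}$-vector space of dimension $p^e>1$; note that $p^e$ is the linear degree, not the group index. You can then either quote that superstable groups admit no such chain, or note that it produces $\lambda^{\aleph_0}>\lambda$ types over a model of size $\lambda$ when $\mathrm{cf}(\lambda)=\omega$.

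Second, in the stability count, the claim that $M\langle a\rangle$ is countably generated over $M$ needs an argument when $e=\infty$, since $M$ then need not contain a $p$-basis of $K$. Adjoin the countably many elements of a $p$-basis of $K$ over $M$ that occur in the coordinate tree of $a$. The resulting field is $\lambda$-closed, and subextensions of countably generated extensions are countably generated.
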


\section{Basic results on separably closed valued fields}\label{S:Basic-SCVF}

\subsection*{Languages and theories of valued fields}\label{sec:languages}
For a valued field $(K,v)$, we let $\VG(K)$ denote its value group $v(K^\times)$, $\mathcal{O}(K)$ its valuation ring with unique maximal ideal $\cM(K)$, $\RF(K)=\mathcal{O}(K)/\cM(K)$ its residue field and $\res\colon \mathcal{O}(K)\to \RF(K)$ the residue map. By default, unless otherwise stated, by a valued field extension $K\subseteq L$ we implicitly mean $(K, v)\subseteq (L,v)$ and always use $v$ for the underlying valuation. When working with multi-sorted structures, we will also use the notation $\VF(K)$ to denote the field $K$. 

\medskip

We will mainly work in the following languages and theories of valued fields. 
\begin{itemize}[leftmargin=*]
    \item The 3-sorted language of valued fields $\cLgk$ has sorts $\VF, \RF$, and $\VGi$, where $\VF$ and $\RF$ are equipped with $\cLr$, $\VGi$ is equipped with $\cLog$ together with a new constant symbol for $\infty$, and we have additional function symbols for the valuation $v\colon \VF\rightarrow\VGi$ and the following residue map $\Res\colon \VF^2\rightarrow \RF$ interpreted in a valued field $(K,v)$ by     
\[
\Res(x,y)\coloneqq
\begin{cases}
\res(\frac{x}{y}) & \text{ if } \infty\neq v(y)\leqslant v(x);\\
0 & \text{otherwise.}
\end{cases}
\]
We let $\VG$ be $\VGi\setminus \{\infty\}$. We denote by $\ACVF$ the $\cLgk$-theory of algebraically closed non-trivially valued fields.     

\item Fix a prime $p>0$. We will treat valued fields of characteristic $p$ in the language $\cLgk^\lambda:=\cLgk\cup\{\lambda_{n,\nu}\mid n\in\mathbb{N}, \nu\in p^n\}$, where the parametrized $p$-coordinate functions are functions on sort $\VF$. We denote by $\SCVF_p$  the $\cLgk^{\lambda}$-theory of separably closed non-trivially valued fields of characteristic $p$. For $e\in\mathbb{N}\cup\{\infty\}$, $\SCVF_{p,e}$ denotes the theory $\SCVF_p$ together with axioms stipulating that the imperfection degree of the valued field is $e$.  
\end{itemize}

The following elementary fact shows that the auxiliary sorts in models of $\SCVF_p$ are those from their algebraic closure.

\begin{fact}[\mbox{\cite[Lemma~5.2.5 \& Remark~5.2.6]{HonPhD}}]\label{F:dense}
Let $(K,v)\models\SCVF_p$. Then $K^{p^n}$ is $v$-dense in $K^{\alg}$ for every $n\geq0$. In particular, the extension $K^{\alg}/K^{p^n}$ is immediate. 

In case $(K,v)$ is $\omega$-saturated, $K^{p^\infty}$ is $v$-dense in $K^{\alg}$.
\end{fact}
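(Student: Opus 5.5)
The plan is to prove the two density assertions separately, and then read off immediacy from the first. The case $n=0$ is the classical fact that a separably closed non-trivially valued field is dense in its algebraic closure. From it, the density of $K^{p^n}$ follows by applying the Frobenius power.

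\emph{Density of $K$ in $K^{\alg}$.} Let $a\in K^{\alg}$ and $\gamma\in\VG$. The hardest step is to find $b\in K$ with $v(a-b)>\gamma$. I would first reduce to $a$ separable over $K$. Since $a^{p^m}$ is separable over $K$ for some $m$, we get $a^{p^m}\in K$, so $a$ is purely inseparable over $K$. The standard trick is to perturb the minimal polynomial $x^{p^m}-c$ of $a$. Pick $t\in K$ with $v(t)$ large, which is possible because $v$ is non-trivial. Then $f(x)=x^{p^m}-tx-c$ is separable, as $f'=-t\neq0$. Its roots are $a+\epsilon$, where $\epsilon$ runs over the roots of $\epsilon^{p^m}-t\epsilon-ta=0$. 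Hence $\epsilon$ lies in $K$ or $K^{\alg}$ with $p^m v(\epsilon)\geq \min(v(t)+v(\epsilon),v(t)+v(a))$. By a Newton polygon computation, $v(\epsilon)\to\infty$ as $v(t)\to\infty$. Since $K$ is separably closed, all roots of $f$ lie in $K$, so $b=a+\epsilon\in K$ with $v(a-b)=v(\epsilon)>\gamma$. This is where the non-triviality of $v$ is used, and it is the only step that needs genuine care.

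\emph{Density of $K^{p^n}$.} Frobenius $x\mapsto x^{p^n}$ is an automorphism of $K^{\alg}$, and it multiplies valuations by $p^n$. Given $a\in K^{\alg}$, choose $b\in K$ with $v(a^{1/p^n}-b)>\gamma/p^n$. Then $v(a-b^{p^n})>\gamma$, so $K^{p^n}$ is dense in $K^{\alg}$. Immediacy follows at once. For $a\in K^{\alg}$ nonzero, pick $b\in K^{p^n}$ with $v(a-b)>v(a)$. Then $v(b)=v(a)$, and if $v(a)=0$ also $\res(a)=\res(b)$. So the value group and residue field do not grow.

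\emph{The $\omega$-saturated case.} Fix $a\in K^{\alg}$ and $\gamma\in\VG(K)$; note that $\VG(K^{\alg})$ is the divisible hull of $\VG(K)$, so $\gamma$ may be taken in $\VG(K)$. Pass to the minimal polynomial $g$ of $a$ over $K$ and replace $a$ by a root approximant. Consider the partial type over the finitely many coefficients of $g$ and a parameter $c\in K$ with $v(c)=\gamma$:
\[\Sigma(x)=\{\exists y\,(y^{p^n}=x)\mid n\geq0\}\cup\{v(g(x))>N\gamma'\mid N\in\N\},\]
where $\gamma'$ is chosen suitably in terms of $\gamma$. Alternatively, and more simply, one can express "$x$ is within $\gamma$ of some root of $g$" by the first-order condition $v(g(x))>\delta$ for a suitable $\delta$ computed from $\gamma$ and the root configuration; this is a standard Krasner-type estimate. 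By the previous paragraph each finite subset of $\Sigma$ is realized, since $K^{p^n}$ is dense for every $n$. Hence $\omega$-saturation yields $x\in\bigcap_n K^{p^n}=K^{p^\infty}$ close to $a$. The main technical care is in choosing the valuative condition on $x$ so that it is first-order over finitely many parameters. Using the single formula "$v(g(x))>\delta$" for a fixed $\delta$, together with the countably many formulas $\exists y\,(y^{p^n}=x)$, suffices.
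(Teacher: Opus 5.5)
Your proof is correct. The paper gives no argument of its own for this statement: it is imported as a Fact from Hong's thesis. So your proposal is a self-contained verification of a cited result rather than an alternative route. Your three steps are the standard ones:
\begin{itemize}
\item perturbing the purely inseparable minimal polynomial $x^{p^m}-c$ to the separable $x^{p^m}-tx-c$ to get density of $K$ in $K^{\alg}$;
\item Frobenius rescaling to pass from $K$ to $K^{p^n}$ and read off immediacy;
\item compactness for $K^{p^\infty}$.
\end{itemize}
Your valuation estimate is also sound. From $\epsilon^{p^m}=t(\epsilon+a)$ you get either $v(\epsilon)\geq v(t)/(p^m-1)$ or $v(\epsilon)=(v(t)+v(a))/p^m$, and both tend to infinity with $v(t)$.

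Two presentational fixes are worth making.

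First, at the start of the density step you say you reduce to $a$ \emph{separable} over $K$. You mean purely inseparable, as your next sentence shows; if $m=0$ there is nothing to prove.

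Second, in the $\omega$-saturated case the ``root approximant'', the ``Krasner-type estimate'' and the first version of $\Sigma$ with the conditions $v(g(x))>N\gamma'$ are all unnecessary. Since $K$ is separably closed, the minimal polynomial of $a$ is $g(x)=x^{p^m}-c=(x-a)^{p^m}$. Hence $v(g(x))=p^m v(x-a)$, and the single formula $v(x^{p^m}-c)>p^m\gamma$ says exactly $v(x-a)>\gamma$. Its parameters are $c\in K$ and $\gamma\in\VG(K)$, and you may take $\gamma\in\VG(K)$ since $\VG(K)=\VG(K^{\alg})$ by the immediacy you just proved.

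This observation is also what justifies your step from ``within $\gamma$ of some root of $g$'' to ``close to $a$''. For a minimal polynomial with several distinct roots that inference would fail, so state explicitly that $a$ is the unique root.

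With this in place, finite satisfiability of $\{\exists y\,(y^{p^n}=x)\mid n\geq 0\}\cup\{v(x^{p^m}-c)>p^m\gamma\}$ follows from the density of each $K^{p^N}$ in $K^{\alg}$. Then $\omega$-saturation yields $x\in K^{p^\infty}$ with $v(x-a)>\gamma$, as required.
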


\subsection*{Quantifier elimination in separably closed valued fields} Let us first state the result in the algebraically closed case.

\begin{fact}[\cite{HHM1}]\label{F:QE-ACVF}
The theory $\ACVF$ eliminates quantifiers in $\cLgk$. Its completions are given by fixing the characterics of the valued field and of the residue field. In a model $M$ of $\ACVF$, the residue field $\RF(M)$ is stably embedded with induced structure a pure algebraically closed field, and the value group $\VG(M)$ is stably embedded with induced structure a pure model of $\DOAG$ (with a constant for $v(p)$ in mixed characteristic $(0,p)$). Moreover, $\RF(M)\perp\VG(M)$.\qedhere
\end{fact}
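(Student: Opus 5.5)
Fact~\ref{F:QE-ACVF} is classical and due to Haskell--Hrushovski--Macpherson (building on Robinson). To prove it directly, I would use the standard back-and-forth criterion for quantifier elimination. Take $M,N\models\ACVF$ with $N$ $|M|^+$-saturated, a substructure $A\subseteq M$ in $\cLgk$, and an embedding $f\colon A\to N$. It suffices to extend $f$ to $M$, or at least to add one further element. I will proceed by a sequence of reductions.

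First, I extend $f$ on the auxiliary sorts $\RF$ and $\VGi$ so that they are generated by the image of the field sort. An arbitrary residue element that is not yet covered is handled by adjoining a lift, i.e.\ a residually transcendental unit; its type over the field is determined by genericity. An arbitrary value $\gamma$ is handled by adjoining an element of value $\gamma$ (a valuation-transcendental extension), whose type is determined by the cut of $\gamma$ over the current value group. Saturation of $N$ provides the images. Next I pass from the subring $\VF(A)$ to its fraction field, which carries a unique extension of the valuation. Then I pass to its henselization, using the universal property, and to its algebraic closure, using the uniqueness of the extension of a henselian valuation to algebraic extensions. At this stage $\VF(A)$ is algebraically closed, so its residue field is algebraically closed and its value group is divisible.

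Now add a single element $a\in M$. There are three cases, according to the extension $\VF(A)(a)$. If it is residual, with $a$ a lift of a residue element transcendental over $\RF(A)$, the valuation is the Gauss valuation and the type is determined. If it is value-group increasing, the type is determined by the cut. If it is immediate, I treat it via pseudo-Cauchy sequences. Since $\VF(A)$ is algebraically closed, every pseudo-Cauchy sequence without a limit is of transcendental type (Kaplansky), so $v(P(a))$ is determined by the sequence for all polynomials $P$. Saturation of $N$ then yields a pseudo-limit realizing the image sequence. I expect this immediate case to be the main obstacle; this is exactly where algebraic closedness, which excludes algebraic-type sequences, is used.

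Completeness follows from quantifier elimination. The prime substructure is determined by the characteristics $(\mathrm{char}\,K,\mathrm{char}\,\RF)$ together with the restriction of the valuation to the prime field, and that restriction is $p$-adic or trivial accordingly. For stable embeddedness and the induced structures, I use quantifier elimination to reduce each definable subset of $\RF^n$ to a Boolean combination of conditions $\Res$ of polynomials. Pulling parameters into the sort $\RF$ via $\Res$ shows these are $\cLr$-definable in $\RF$; the same argument applies to $\VG$ with $\cLog$, together with the constant $v(p)$ in mixed characteristic. Orthogonality is obtained as follows. By quantifier elimination, a formula in variables from $\RF$ and $\VG$ reduces to terms mixing the two sorts only through field-sort parameters. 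After the above reduction to an algebraically closed base, the residue field and the value group of any extension generated by residue and value elements vary independently, by the Abhyankar-type construction. Hence every definable subset of $\RF^n\times\VG^m$ is a finite union of rectangles.
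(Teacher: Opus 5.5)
The paper does not prove this Fact; it quotes it from \cite{HHM1}. Your back-and-forth outline is the standard route behind that citation, and much of it is fine. Over an algebraically closed $\VF(A)$ your one-element step is correct: the residual case, the value-increasing case, and the immediate case via Kaplansky's theorem that limitless pseudo-Cauchy sequences over an algebraically closed field are of transcendental type. Completeness via the prime substructure, and stable embeddedness and purity of $\RF$ and $\VG$ by inspection of terms, are also correct.

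The gap is in your first reduction. Put $K:=\mathrm{Frac}(\VF(A))$ and $k_0:=\res(\OO(K))$; note $k_0\subseteq\RF(A)$ because $\res(x/y)=\Res(x,y)$. You claim two things:
\begin{itemize}
\item every $\alpha\in\RF(A)\setminus k_0$ is covered by adjoining a residually transcendental lift whose type is determined by genericity;
\item every $\gamma\in\VG(A)\setminus v(K^\times)$ is covered by a valuation-transcendental element whose type is determined by the cut.
\end{itemize}
You do this \emph{before} passing to $K^{\alg}$, so $\alpha$ may be algebraic over $k_0$ and $\gamma$ may be torsion modulo $v(K^\times)$. In those cases both claims fail. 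If $n\gamma=v(b)$ with $b\in K$, an arbitrary $a$ of value $\gamma$ does not determine $v(a^n-b)$. If $\bar P\in k_0[x]$ is the minimal polynomial of $\alpha$ and $P$ is a monic lift, an arbitrary lift $a$ of $\alpha$ does not determine $v(P(a))$.

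For torsion $\gamma$ the repair is just to reorder. The embedding of the divisible hull of $v(K^\times)$ into $\VG(N)$ is unique, so any extension of $f$ to $K^{\alg}$ agrees with $f$ on such $\gamma$.

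For algebraic $\alpha$, reordering does not help, and an extra idea is needed. The extension of $f$ from the henselization to $K^{\alg}$ is far from unique, and its residue map need not agree with $f$ on $\RF(A)$. One fix is to adjoin a specific lift:
\begin{itemize}
\item Since $P$ splits over $\VF(M)$ with reduction $\bar P$, some root $a$ of $P$ has $\res(a)=\alpha$.
\item The fundamental inequality gives $[K(a):K]=n=\deg\bar P$ with residue degree $n$.
\item Because $1,\alpha,\dots,\alpha^{n-1}$ are $k_0$-linearly independent, $v(\sum_{i<n}c_ia^i)=\min_i v(c_i)$.
\item Choosing a root $a'$ of $f(P)$ in $N$ with $\res(a')=f(\alpha)$ then gives a valued field isomorphism $K(a)\cong f(K)(a')$ compatible with $f$ on $k_0(\alpha)\subseteq\RF(A)$.
\end{itemize}
Alternatively, for henselian $K$ every automorphism of $k_0^{\alg}$ over $k_0$ is induced by some element of $\mathrm{Aut}(K^{\alg}/K)$, and you can adjust the extension accordingly. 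Only once $\VF(A)$ is algebraically closed are new residues necessarily transcendental and new values non-torsion, so that your description applies.

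Finally, the orthogonality $\RF(M)\perp\VG(M)$ needs no Abhyankar-type argument. There is no function symbol between $\RF$ and $\VGi$, nor from these sorts into $\VF$. So by quantifier elimination, a formula in $\RF$-variables and $\VG$-variables is a Boolean combination of formulas in each group separately, and hence defines a finite union of rectangles.
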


The following result is well known. We include a proof for convenience, as we could not find it stated in this form in the literature. A modern proof of the corresponding QE result in the one-sorted language $\cL_{\mathrm{div}}^\lambda$ may be found in \cite{Hon16}.

\begin{proposition}\label{prop:QE-SCVF}
Fix a prime $p$ and $e\in \mathbb{N}\cup\{\infty\}$. Then the following holds.
\begin{enumerate}
\item $\SCVF_{p,e}$ eliminates quantifiers in $\cLgk^{\lambda}$ and is complete. 
    \item In $M\models \SCVF_{p,e}$, the residue field $\RF(M)$ is stably embedded with induced structure a pure model of $\ACF_p$, and the value group $\VG(M)$ is stably embedded with induced structure a pure model of $\DOAG$. Moreover, $\RF(M)\perp\VG(M)$.
\end{enumerate}
\end{proposition}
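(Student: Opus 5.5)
The plan is a back-and-forth argument for part (1), reducing to quantifier elimination in $\ACVF$ (Fact~\ref{F:QE-ACVF}) together with the structure of separable extensions (Fact~\ref{F:characterize-sep-ext}). I would show that if $M,N\models\SCVF_{p,e}$ with $N$ $|M|^+$-saturated, $A\subseteq M$ is an $\cLgk^\lambda$-substructure and $f\colon A\to N$ is an embedding, then $f$ extends to an embedding of $M$ into $N$. This yields quantifier elimination. Completeness then follows because $\mathbb{F}_p$, with the trivial valuation and auxiliary sorts $\{0\}$, $\mathbb{F}_p$ and $\{0,\infty\}$ (suitably generated), embeds into every model; more precisely, the prime substructure is determined by $e$ up to isomorphism.

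\textbf{Normalizing $A$.} First handle the auxiliary sorts. Enlarge $\VF(A)$ to a field: if $\VF(A)\not\subseteq M^{p^\infty}$, use Lemma~\ref{L:lambda-subfield}; otherwise note that $\VF(A)$ consists of elements of the perfect subfield, and there the fraction field is unproblematic since $M^{p^\infty}$ is a field closed under $\lambda_{0,\emptyset}$. Then pass to the valued field generated by $\VF(A)$ inside $M^{\alg}$, with sorts $\RF(A)$ and $\VG(A)$, using $\ACVF$ QE. Extending $f$ on the $\RF$ and $\VG$ sorts past the field sort is standard, exactly as in the $\ACVF$ proof, via residue and value-group transcendentals realized by saturation.

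\textbf{Separable closure and the extension step.} The key point is that an $\cLr^\lambda$-substructure $K=\VF(A)$ makes $M/K$ separable (Fact~\ref{F:characterize-sep-ext}(4)), and likewise $N/f(K)$. I would extend $f$ to $K^{\mathrm{sep}}\cap M$ using henselianity; note that $M$ is henselian, being separably closed. Then I would add a $p$-basis: if $e(K)<e$, choose $b\in M$ $p$-independent over $K$. By saturation and density (Fact~\ref{F:dense}), the quantifier-free $\ACVF$-type of $b$ over $K$ can be realized by some $b'\in N$ that is $p$-independent over $f(K)$. Here I use that in $N$ the set of elements $p$-independent over a small set is dense enough to meet any nonempty $\ACVF$-type over $K$ realized in an immediate or transcendental position. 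The result is a separable extension $K(b)$ with $K(b)\cong f(K)(b')$ as $\cLr^\lambda$-valued fields. Once $K$ contains a $p$-basis of $M$, the extension $M/K$ is separable with $e(M/K)=0$, hence $M\subseteq$ closure of $K(M^{p^\infty})$ obtained by separable algebraic steps over $K$ adjoined with $p^n$-th powers. Adding an element $a\in M$ then amounts to realizing $\mathrm{qftp}_{\ACVF}(a/K)$ by an element of $N$ in the image-compatible position, which saturation of $N$ provides. Because $K$ already contains a $p$-basis, the $\lambda$-functions are determined by the field structure, so any valued-field embedding of $K(a)$ is automatically a $\lambda$-embedding.

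\textbf{Main obstacle and part (2).} The main obstacle is the step adding a new $p$-independent element while preserving the valued-field quantifier-free type. Here I would exploit that $K^{p}(b)$ is dense, and argue via Fact~\ref{F:dense} that a type realized in $N^{\alg}$ can be approximated by elements of $N$ with the prescribed $p$-independence, then pass to the limit using saturation.

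For (2): since $M\preceq$-embeds into $M^{\alg}\models\ACVF$ with the same $\RF$ and $\VG$ (Fact~\ref{F:dense}), quantifier elimination shows that any $\cLgk^\lambda$-formula restricted to $\RF\times\VG$ is equivalent to a quantifier-free formula. Atomic formulas involving field terms in auxiliary variables reduce to $\RF$- and $\VG$-formulas, since only terms of sort $\RF$ or $\VG$ appear. Stable embeddedness, purity and orthogonality then transfer verbatim from Fact~\ref{F:QE-ACVF}.
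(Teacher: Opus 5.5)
Your normalization of $A$, the passage to the relative separable closure via henselianity, and the step adjoining one element $b$ that is $p$-independent over $K$ are fine. That last step works because $b$ is transcendental, so its qf type is open and can be realized in the dense set $t_0\VF(N)^p$; it is exactly the special case in the paper's proof. Two small corrections:
\begin{itemize}
\item The test for continuing should be $e(M/K)>0$, i.e.\ $M\neq M^pK$, not $e(K)<e$, which says nothing when $e=\infty$.
\item Part (2) is just the syntactic consequence of QE in $\cLgk^\lambda$; no elementary embedding of $M$ into $M^{\alg}$ is involved.
\end{itemize}

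The genuine gap is the final step. Once $K$ contains a $p$-basis $B$ of $M$, it is false that any valued-field embedding of $K(a)$ is a $\lambda$-embedding, because $K(a)$ is in general not an $\cLgk^\lambda$-substructure. Take $c\in M$ transcendental over $K$ and set $a:=c^p$. Then $\lambda_{0,\emptyset}(a)=c\notin K(a)$. If $e\geq 1$, pick $t_0\in\VF(N)\setminus\VF(N)^p$. The set $t_0\VF(N)^p$ is dense and disjoint from $\VF(N)^p$, so the open qf $\cLgk$-type of $a$ over $K$ is realized by some $a'\notin\VF(N)^p$. Since $\lambda_{0,\emptyset}(a')=0$, the map $a\mapsto a'$ does not extend to any $\cLgk^\lambda$-embedding of the substructure generated by $K\cup\{a\}$. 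So choosing images one element at a time, according to their valued-field type alone, gets stuck. Your phrase ``image-compatible position'' hides precisely the missing control.

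What is true is the global version. Any valued-field embedding $g$ of all of $M$ extending $f$ is a $\lambda$-embedding: $g(B)=f(B)$ is a $p$-basis of $g(\VF(M))$ which is $p$-independent in $\VF(N)$, so $\VF(N)/g(\VF(M))$ is separable.

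So this step should be done by compactness: realize in $N$ the qf $\cLgk$-type of all of $M$ over the current substructure. This type is finitely satisfiable. Each finitely generated $M_0/K$ has a separating transcendence basis (Fact~\ref{F:characterize-sep-ext}), whose open types are realized by density of $\VF(N)$ in $\VF(N)^{\alg}$. The separable algebraic part then embeds because $N$ is separably closed.

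This is the paper's strategy. The paper even skips your $p$-basis step. It adds to $\qftp_{\cLgk}(M/A)$ the partial type saying that tuples $p$-independent over $\VF(A)$ in $M$ remain $p$-independent in $N$, and then reads off separability of $\VF(N)/g(\VF(M))$ at the end.
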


\begin{proof}
Part (2) follows from (1) using syntactic considerations. 

We will now prove (1). Let $M,N$ be models of $\SCVF_{p,e}^\lambda$
	with $M$ countable and $N$  $\aleph_1$-saturated and
	let $A$ be a common substructure of $M$ and $N$. The goal is to find an embedding $g:M\hookrightarrow N$ over $A$.

\emph{Step 1. We may assume $\VF(A)$ is a field.}

Indeed, by Lemma~\ref{L:lambda-subfield}, if $\VF(A)$ is not a field, then $\VF(A)\subseteq \VF(M)^{p^\infty}$ in which case $\text{Frac}(\VF(A))\subseteq \VF(M)^{p^\infty}$ as well, so for $a,b_1,\ldots,b_n\in\text{Frac}(\VF(A))$, we have
	\[
	\lambda_{n,\nu}(a;b_1,\cdots,b_n)=
	\begin{cases}
		a^{1/p},   & n=0; \\
		0,             & \text{otherwise.}
	\end{cases}
	\]
It follows that $A':=(\text{Frac}(\VF(A)),\VG(\cA),\RF(\cA))$ is an $\cLgk^{\lambda}$-substructure, and its isomorphism type is uniquely determined over $A$.

\emph{Step 2. We may assume $\RF(A)=\RF(M)$ and $\VG(A)=\VG(M)$.}

This follows from quantifier elimination for $\ACF_p$ in $\cLr$ and for $\DOAG$ in $\cLog$.

The following is the main step:

\begin{claim}
    Let $A\subseteq M_0\subseteq M$, where $M_0$ is an $\cLgk$-substructure of $M$ such that $\VF(M_0)$ is a field and the field extension $\VF(M_0)/\VF(A)$ is finitely generated. 
    
    Then there is an $\cLgk$-embedding $f:M_0\hookrightarrow N$ over $A$ such that whenever a tuple $\mathbf{b}=(b_1,\ldots,b_d)$ from $\VF(M_0)$ is $p$-independent over $\VF(A)$ in $\VF(M)$, then $f(\mathbf{b})$ is $p$-independent over $\VF(A)$ in $\VF(N)$.
\end{claim}

Observe that $e(\VF(N)/\VF(A))\geq e(\VF(M)/\VF(A))$ by the saturation assumption on $N$. 

To prove the claim, assume first that $\VF(M_0)=\VF(A)(t)$ for some element $t$ which is $p$-independent in $\VF(M)$ over $\VF(A)$. Then necessarily $t$ is transcendental over $\VF(A)$. Thus, by Fact~\ref{F:QE-ACVF}, every quantifier-free $\cLgk$(A)-formula $\phi(x)$ satisfied by $t$ has non-empty interior. 

Since $e(\VF(N)/\VF(A))\geq e(\VF(M)/\VF(A))$, there is $t_0\in\VF(N)$ which is $p$-independent over $\VF(A)$. As $\VF(N)^p$ is dense in $\VF(N)^{alg}$ (by Fact~\ref{F:dense}), so is $t_0\VF(N)^p$. By saturation, it follows that $\qftp_{\cLgk}(t/A)$ is realized in $\VF(N)$ by an element $t'$ which is $p$-independent over $\VF(A)$ in $\VF(N)$.

Now let $M_0$ be arbitrary as in the claim. By Fact~\ref{F:characterize-sep-ext}, the finitely generated separable extension $\VF(M_0)/\VF(A)$ admits a transcendence basis $(b_1,\ldots,b_n)$ such that the extension $\VF(M_0)/\VF(A))(b_1,\ldots,b_n)$ is separable algebraic. We may assume that $(b_1,\ldots,b_d)$  form a $p$-basis of $\VF(M_0)$ over $\VF(A)$ in $\VF(M)$. By the special case, treated at the beginning, and induction on $d$, we find a tuple $(b_1',\ldots, b_d')$ in $\VF(N)$ which is $p$-independent over $\VF(A)$ such that $b_i\mapsto b_i'$ defines an $\cLgk$-embedding $f_0:A(b_1,\ldots,b_d)\hookrightarrow N$ over $A$. By the density of $\VF(N)$ in $\VF(N)^{alg}$, using Fact~\ref{F:QE-ACVF} again, $f_0$ extends to an $\cLgk$-embedding $f_1: A(b_1,\ldots,b_n)\hookrightarrow N$, which extends to an $\cLgk$-embedding $f:M_0\hookrightarrow N$, as $\VF(M_0)/\VF(A)(b_1,\ldots,b_n)$ is separable algebraic and $\VF(N)$ is separably closed.

It is straight forward to check that $f$ is as required, finishing the proof of the claim.

By the claim and the saturation assumption on $N$, the partial type
$$\qftp_{\cLgk}(M/A) \cup \left\{ 
\begin{aligned}
\text{``$(x_{b_1}, \ldots, x_{b_m})$ is $p$-independent over $\VF(A)$ in $\VF(N)$''} \mid \,\,\,\,\,\,\,\,\,\,\,\,\,\,\,\,\\
\,\,\,\,\,\,\,\,\,\,\,\,\,\,\,\,\text{$(b_1, \ldots, b_m)$ is $p$-independent in $\VF(M)$ over $\VF(A)$}
\end{aligned} \right\}$$
is realized in $N$. This yields an $\cLgk$-embedding $g:M\hookrightarrow N$. Note that, by construction, $\VF(N)/g(\VF(M))$ is separable. Thus $g$ preserves the functions $\lambda_{n,\nu}$ as well and is thus an $\cLgk^\lambda$-embedding as required.

Completeness of $\SCVF_{p,e}$ follows from quantifier elimination since $\mathbb{F}_p$ is a substructure of every model.
\end{proof}

\section{A quick review of beautiful pairs}\label{S:beauty}
In this section we recall the key notions and results from \cite{CuHiYe26} on beautiful pairs in not necessarily stable theories.

We fix a language $\cL$ and a complete $\cL$-theory $T$ with quantifier elimination. We let $\cU$ be a monster model of~$T$. 
A small subset of $\cU$ is a subset of cardinality smaller than the degree of saturation of $\cU$. The parameter sets we consider are small subsets of $\cU$, and the models of $T$ we consider are small elementary substructures of $\cU$.

\subsection*{Definable types and stably embedded pairs}
Recall that given $A\subseteq M\models T$, a type $p\in S_x(M)$ is \emph{$A$-definable} if for every $\cL$-formula $\varphi(x,y)$ there is an $\cL(A)$-formula $d_p\varphi(y)$ such that for every $c\in M^y$
\begin{equation}\label{Eq:def-scheme}
\varphi(x,c)\in p(x) \Leftrightarrow \ M\models d_p\varphi(c). 
\end{equation}
We say $p\in S_x(M)$ is \emph{definable} if it is $M$-definable. Given any set $B$ containing $M$, we use $p|B$ to denote the type $\{\varphi(x,b)\mid b\in B^{y}\text{ such that } \cU\models d_p\varphi(b)\}$, i.e.\ the (complete) type given by applying the scheme of definition \eqref{Eq:def-scheme} to tuples from $B^{y}$. We let $S_x^\Def(A)$ denote the subset of $S_x(\cU)$ of $A$-definable types, and for an $A$-definable set $X$, we denote by $S_X^\Def(A)$ the set of global $A$-definable types concentrating on $X$. Here, by a 'global' type we mean a type over $\cU$.

Stably embedded (and in particular beautiful) pairs are a way to turn definable types into structures, in order to facilitate their model-theoretic study. Let $\cL_P:=\cL\cup\{P\}$ be the language of pairs of $\cL$-structures. An $\cL_P$-structure $\cA=(A,P(\cA))$ is called a \emph{stably embedded pair}, in short, \emph{\sepa-pair}, if $P(\cA)\models T$, $P(\cA)\subseteq A\models T_{\forall}$ and $\tp_{\cL}(A/P(\cA))$ is definable. We denote by $\cK_\Def$ the class of \sepa-pairs. 

Given a model $A\models T$, one associates to $\cA$ the trivial \sepa-pair $A_\triv:=(A,A)$. If $\cA,\cB\in\cK_{\Def}$ we write $\cA\subseteq_{\BP}\cB$ and call $\cA$ a \emph{$\BP$-substructure} of $\cB$, if $\cA\subseteq_{\cL_P}\cB$ and $\tp_{\cL}(A/P(\cB))=\tp_{\cL}(A/P(\cA))|P(\cB)$. A \emph{$\BP$-embedding} is an isomorphism onto a $\BP$-substructure.

Given $\cA\in\cK_{\Def}$ and $B$ small such that $P(\cA)\elres B\elres\cU$, it follows from the definitions that there is a unique amalgam $\cA_B$ of \sepa-pairs with $\BP$-embeddings
\[
\begin{tikzcd}
             & \cA_B                       &              \\
\cA \arrow[ru,dotted,"g_1"] &                     & B_\triv \arrow[lu,dotted,"g_2"'] \\
             & P(\cA)_\triv \arrow[lu] \arrow[ru] &             
\end{tikzcd}
\]
such that $\mathcal{A}_B=(\langle g_1(A)\cup g_2(B)\rangle_{\cL},B)$, where $\langle C\rangle$ denotes the $\cL$-substructure generated by $C$. 

Let $\cF$ denote a class of global definable types. For $A\subseteq \cU$ and a tuple of variables $x$ (possibly infinite), we let $\cF_x(A)$ denote the subset of $\cF$ of $A$-definable types in variables $x$. If $X$ is an $A$-definable set, $\cF_X(A)$ denotes the subset of $\cF(A)$ concentrating on $X$.

\begin{definition}\label{def:natuclass-F} A non-empty class $\cF$ of global definable types is called \emph{natural} if $\cF$ satisfies the following properties for every $\cM\preccurlyeq \cU$:
\begin{itemize}[leftmargin=*]
    \item For any tuple $x$ of variables, $p\in \cF_x(M)$ if and only if $p|_{x'}\in \cF_{x'}(M)$ for any finite $x'\subseteq x$.
    \item Given $\sigma\in \Aut(\cU)$, $\sigma(\cF(M))=\cF(\sigma(M))$. 
    \item $\cF(M)$ is closed under push forwards by $M$-definable functions.
\end{itemize}
\end{definition}

Given a natural class of global definable types $\cF$, one defines the class of \sepa-pairs $\cK_\cF$ associated to $\cF$ which consists of the \sepa-pairs $\cA$ which satisfy $\tp_{\cL}(A/P(\cA))\in\cF(P(\cA))$. One may characterize axiomatically the subclasses $\cK$ of $\cK_{\Def}$ which are of the form $\cK_{\cF}$ for some natural class of global definable types $\cF$ (see \cite[Definition~2.26 \& Lemma~2.3.3]{CuHiYe26}). We call them \emph{natural classes} of \sepa-pairs. Given a natural class $\cK\subseteq\cK_{\Def}$, one denotes by $\cF_{\cK}$ the (uniquely determined) natural class of definable types with $\cK=\cK_{\cF}$. 

The two extreme natural classes of definable types are the class $S^{\Def}$ of all definable types and the class $\cF^{\triv}$ consisting of all realized types, with corresponding natural classes $\cK_{\Def}$ and $\cK_\triv$ of \sepa-pairs. The class of all definable types which are orthogonal to a given sort (in the sense of \cite[Section 2.5]{HL}), or to a given set of $\emptyset$-invariant types, is natural. Another natural class is that of all generically stable types. 

\subsection*{Beautiful pairs, beauty transfer and its consequences} 
We start with the definition of the key concept. 

\begin{definition}\label{def:bpair} Let $\cK$ be a natural class of \sepa-pairs and $\lambda\geqslant |T|^+$. An \sepa-pair $\cM\in\cK$ is called a \emph{$\lambda$-$\cK$-beautiful pair} if the following condition is satisfied:
\begin{itemize}\label{BP-condition}
    \item[(BP)] whenever there are $\BP$-embeddings $f\colon \cA \to \cM$ and $g\colon \cA \to \cB$ with $\cA,\cB\in \cK$ and $|\cB|<\lambda$, there is a $\BP$-embedding $h\colon \cB\to \cM$ such that $f=h\circ g$. 
\end{itemize}
We call an $\cL_P$-structure $\cM$ a \emph{$\cK$-beautiful} pair if it is a $\lambda$-$\cK$-beautiful pair for some $\lambda\geqslant|T|^+$, and we will omit the $\cK$ when $\cK=\cK_{\Def}$.
\end{definition}



The natural class $\cK$ is said to have 
\begin{itemize} 
\item the \emph{amalgamation property} (AP) if whenever $f\colon\cA\to\cB$ and $g\colon\cA\to\cC$ are $\BP$-embeddings between structures from $\cK$, there are $\cD\in\cK$ and $\BP$-embeddings $h\colon\cB\to\cD$ and $i\colon\cC\to\cD$ such that $h\circ f=i\circ g$; 
\item the \emph{extension property} (EP) if whenever $x$ is a single variable, $\cA\in\cK$ and $\varphi(x)$ a consistent (with $T$) $\cL(A)$-formula, then there is $\cB\in \cK$, a $\BP$-embedding $f\colon\cA\to \cB$ and $b\in B$ satisfying $\varphi(x)$. 
\end{itemize}

\begin{fact}[\mbox{\cite[Theorem~2.4.5, Corollary~2.4.6 \& Lemma~2.4.9]{CuHiYe26}}]\label{fact:charct-beaut-pairs} For a natural class $\cK$, the following holds.
\begin{enumerate}
    \item $\cK$ has AP if and only if $\lambda$-$\cK$-beautiful pairs exist for all $\lambda\geqslant |T|^+$. Moreover, assuming $\cK$ has AP, $\cK$-beautiful pairs are elementary pairs if and only if $\cK$ has EP.
    \item Any two $\cK$-beautiful pairs are back-and-forth equivalent, so in particular elementary equivalent. If $\cK$ has AP, the common theory of all $\cK$-beautiful pairs, denoted by $T_\BP(\cK)$, is thus (consistent and) complete.
    \item Assume AP holds in $\cK$. Then the following are equivalent.
    \begin{enumerate}
      \item  There exists $\lambda\geqslant |T|^+$ and $\cM\models T_\BP(\cK)$ which is $\lambda$-saturated and $\lambda$-$\cK$-beautiful; 
    \item  for every $\lambda\geqslant |T|^+$ and every $\cM\models T_\BP(\cK)$, $\cM$ is $\lambda$-saturated if and only if $\cM$ is $\lambda$-$\cK$-beautiful. 
    \end{enumerate}
    \end{enumerate}
\end{fact}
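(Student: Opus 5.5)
The statement is quoted from \cite{CuHiYe26}. To reprove it I would follow the Fraïssé-style pattern familiar from Poizat's stable case, with $\BP$-embeddings in place of elementary embeddings. Everything rests on one preliminary closure property: $\cK$ is closed under unions of $\BP$-chains, and if $\cA\subseteq_{\BP}\cB\subseteq_{\BP}\cC$ then $\cA\subseteq_{\BP}\cC$. For unions, every finite tuple from the union has its type over $P$ of the union equal to the canonical extension of its (definable) type over some $P(\cA_i)$. That type lies in $\cF(P(\cA_i))$, and by naturality its canonical extension is in $\cF$ of the larger model. Together with downward Löwenheim--Skolem for $\BP$-substructures, this gives the basic toolkit.

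For (1), assume AP and fix $\lambda$. I would build a continuous $\BP$-chain $(\cM_i)_{i<\kappa}$ in $\cK$, with $\kappa$ regular and $\kappa\geq\lambda$, using AP at successor steps. Bookkeeping over all pairs $\cA\to\cM_i$, $\cA\to\cB$ with $|\cB|<\lambda$ ensures that each such pair is solved at a later stage, so the union is $\lambda$-$\cK$-beautiful. Conversely, suppose beautiful pairs exist and we are given $\cA\to\cB$ and $\cA\to\cC$; take a $\lambda$-$\cK$-beautiful $\cM$ with $\lambda>|\cB|+|\cC|$. Applying (BP) to trivial pairs shows that $P(\cM)$ is $\lambda$-universal, so $P(\cA)$ elementarily embeds into it. This gives $P(\cA)_\triv\to\cM$, which (BP) extends to $\cA\to\cM$ and then to $\cB$ and $\cC$; the needed amalgam is then a small $\BP$-substructure of $\cM$. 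For EP, elementarity of $P(\cM)\elres M$ reduces by Tarski--Vaught to realizing in $M$ consistent formulas with parameters in a small $\BP$-substructure. EP supplies the extension in which the formula is realized, and (BP) pulls the witness back into $\cM$. For the converse, one takes as witness in $M$ a small $\BP$-substructure containing $\cA$ and the realization.

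For (2), the family of $\BP$-isomorphisms between small $\BP$-substructures of two $\cK$-beautiful pairs is nonempty: the trivial pair on a small elementary substructure of $P$ works, using universality of the $P$'s as above, since the relevant completions of $T$ agree. It also has the back-and-forth property: to add an element on one side, one closes it to a small $\BP$-substructure and applies (BP) on the other side. Karp's theorem then gives elementary equivalence, and completeness of $T_\BP(\cK)$ follows.

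Part (3) is where I expect the main obstacle. The direction (b)$\Rightarrow$(a) is immediate once saturated models exist, for instance in a suitable cardinal or via special models. For (a)$\Rightarrow$(b), fix $\cM_0$ which is both $\lambda$-saturated and $\lambda$-beautiful. First, any $\lambda$-beautiful $\cM$ is $\lambda$-saturated: the back-and-forth system from (2), restricted to substructures of size $<\lambda$, lets me transport a type over a small set in $\cM$ to $\cM_0$, realize it there, and pull the realization back. Second, any $\lambda$-saturated $\cM$ is $\lambda$-beautiful. The key point is that the extension condition in (BP) for a single element, over a small $\BP$-substructure $\cA\subseteq\cM$, is expressed by a small partial $\cL_P$-type over $A$. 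This needs the observation that the $\BP$-condition ``$\tp(Ab/P)$ equals the canonical extension'' is type-definable, by the definability schemes. In $\cM_0$ this type is realized, by beauty, for every configuration isomorphic to $\cA$ with the same $\cL_P$-type. Hence the type is consistent with $T_\BP(\cK)$ together with the elementary diagram, so saturation of $\cM$ realizes it. Finally, I would iterate element by element, taking unions of $\BP$-chains at limit stages. The delicate part is verifying this type-definability and checking that one-point extensions suffice.
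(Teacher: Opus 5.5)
The paper gives no proof of this Fact; it is quoted from \cite{CuHiYe26}, so I judge your sketch on its own. Parts (1) and (2) are fine. Closure of $\cK$ under unions of $\BP$-chains, transitivity of $\subseteq_{\BP}$, L\"owenheim--Skolem for $\BP$-substructures, (BP) applied to $N_\triv\subseteq_{\BP}N'_\triv$ (which makes $P(\cM)$ $\lambda$-universal, even $\lambda$-saturated), Tarski--Vaught for EP and Karp for (2) are exactly the standard ingredients. Part (3), however, has a genuine gap.

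\emph{Membership in $\cK$ is not shown.} To conclude that a $\lambda$-saturated $\cM\models T_\BP(\cK)$ is $\lambda$-$\cK$-beautiful, you must first show $\cM\in\cK$. That is, $\tp_{\cL}(M/P(\cM))$ must be definable and lie in $\cF$. Your phrase ``small $\BP$-substructure $\cA\subseteq\cM$'' presupposes this, but it does not follow from $\cM$ merely being a model of the complete theory $T_\BP(\cK)$. This is a place where hypothesis (a) must be used, and in a uniform way. Since $\cM_0\in\cK$ is $\aleph_1$-saturated, compactness gives for each $\cL$-formula $\phi(x,y)$ a single $\psi_\phi(y,z)$ with $\cM_0\models\forall x\,\exists z\in P\,\forall y\in P\,(\phi(x,y)\leftrightarrow\psi_\phi(y,z))$. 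These sentences lie in $T_\BP(\cK)$, which yields definability in $\cM$. Membership in $\cF$ then follows by realizing $\tp_{\cL_P}(a,(c_\phi)_\phi)$ in $\cM_0$ (possible as $\lambda_0\geq|T|^+$) and using automorphism invariance and finite character of $\cF$.

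\emph{Cardinal mismatch and one-point extensions.} Hypothesis (a) provides one cardinal $\lambda_0$, but you fix $\cM_0$ that is $\lambda$-saturated and $\lambda$-beautiful for the arbitrary $\lambda$ of (b), and both halves of your argument depend on this. The back-and-forth half needs it outright. Your consistency check needs a copy of $\cA$ with the same $\cL_P$-type inside $\cM_0$ and applies beauty to $\cB$, which requires $|\cA|,|\cB|<\lambda_0$. Moreover, the element-by-element iteration through unions of $\BP$-chains is not available: a one-point extension of a $\BP$-substructure is in general not in $\cK$, since $P$ must remain a model of $T$.

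The remedy is to prove ``saturated $\Rightarrow$ beautiful'' first, for every $\lambda$. Realize in one step the partial type $\Sigma(x_B)$ over $A$ formed by the quantifier-free $\cL_P$-diagram of $\cB$ over $\cA$, together with the formulas $\forall y\in P\,(\phi(x_b,y)\leftrightarrow d\phi(y,x_c))$ from the definition scheme of $\tp(B/P(\cB))$. For finite satisfiability, take $\cA_0\subseteq_{\BP}\cB_0$ of size at most $|T|$, with $\cA_0\subseteq_{\BP}\cA$ and $\cB_0\subseteq_{\BP}\cB$, containing the finitely many relevant elements and definition parameters. Realize $\tp_{\cL_P}(A_0)$ in $\cM_0$; the copy is again a $\BP$-substructure, because the $\BP$-condition is $\cL_P$-expressible. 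Then embed $\cB_0$ over it by $\lambda_0$-beauty.

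Once this holds for all $\lambda$, any $\lambda$-saturated model of $T_\BP(\cK)$ is $\lambda$-beautiful; such models exist for every $\lambda$. Only then does your back-and-forth argument give that $\lambda$-beautiful pairs are $\lambda$-saturated.
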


When $\cK=\cK_{\Def}$, we will simply write $T_\BP$ instead of $T_\BP(\cK)$.

\begin{definition}Let $\cK$-beautiful be a natural class of \sepa-pairs with AP. We say that $\cK$ has \emph{beauty transfer} if one of the equivalent conditions in Fact~\ref{fact:charct-beaut-pairs} (3) holds. 
\end{definition}

Given a natural class $\cF$ of definable types, we say $T$ has UDDT($\cF$) if for every $\cL$-formula $\varphi(x, y)$ there is an $\cL$-formula $\psi(y,z)$ such that for every model $M$ of $T$ and every $p\in \cF_x(M)$ there is a $c\in M^z$ such that $\psi(y,c)\equiv d_p\varphi(y)$.

Part (2) of the following result was the main motivation for the introduction and study of beautiful pairs in \cite{CuHiYe26}.

\begin{fact}[\mbox{\cite[Proposition~2.4.12, Theorem~2.5.4 \& Theorem~2.4.16]{CuHiYe26}}]\label{fact:main-beauty}
    Let $\cF$ be a natural class of definable types. Assume that $\cK_{\cF}$ has AP and beauty transfer. Then the following holds.
    \begin{enumerate}
        \item $T$ has UDDT($\cF$).
        \item If $X$ is an $A$-definable set, then $\cF_X(\cU)$ may be canonically identified with a strict pro-definable set over $A$, i.e., with the projective limit of $A$-definable sets, where the projective system has a small index set and surjective $A$-definable transition functions. 
        \item $\cL_P$-isomorphisms between $\BP$-substructures of models of $T_{\BP}(\cK)$ are elementary maps, and the predicate $P$ is purely stably embedded in models of $T_{\BP}(\cK)$, i.e., it is stably embedded with induced structure given by $\cL$.
    \end{enumerate}
\end{fact}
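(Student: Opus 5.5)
We would prove the three parts in the order (3), (1), (2), since (3) is the structural input for the others. Throughout we work in a $\lambda$-saturated model $\cM\models T_\BP(\cK)$ with $\cK=\cK_\cF$. By beauty transfer (Fact~\ref{fact:charct-beaut-pairs}(3)), such an $\cM$ is $\lambda$-$\cK$-beautiful.

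For (3), we would show that the family of $\cL_P$-isomorphisms between small $\BP$-substructures of two such $\lambda$-saturated models has the back-and-forth property. Let $f\colon\cA\to\cA'$ be such an isomorphism and let $b\in M$. Since $\cM\in\cK$, any small set containing $\cA\cup\{b\}$ sits inside a small $\BP$-substructure $\cB$ of $\cM$. For instance, take a small $P(\cB)\elres P(\cM)$ containing $P(\cA)$ and the parameters of the relevant definitions, and let $B=\langle A b P(\cB)\rangle$. Now apply (BP) in the target model to $f^{-1}$ followed by $\cA\subseteq_\BP\cB$. This gives a $\BP$-embedding $h\colon\cB\to\cM'$ extending $f$. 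The back step is symmetric. Since small $\BP$-substructures include $P_0{}_\triv$ for small $P_0\elres P(\cM)$, the system is nonempty.

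It follows that such isomorphisms are elementary, and a standard saturation argument transfers this to arbitrary models of $T_\BP(\cK)$. For stable embeddedness, note that any $\cL$-elementary map between small $P_0,P_0'\elres P(\cM)$ gives an isomorphism of trivial $\BP$-substructures. Hence it is $\cL_P$-elementary, so the $\cL_P$-type of a tuple from $P$ is determined by its $\cL$-type. Combined with the usual automorphism criterion in a saturated and homogeneous model, this yields that $P$ is purely stably embedded.

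For (1), we argue by contradiction using compactness. Fix $\varphi(x,y)$ and suppose no $\psi(y,z)$ uniformly defines $d_p\varphi$ for $p\in\cF$. Since finitely many candidate $\psi$'s can be coded into a single one, we obtain a counterexample for each $\psi$: some $N\models T$ and $p\in\cF_x(N)$ such that $d_p\varphi$ is not of the form $\psi(y,c)$ with $c\in N$. The pair $(\langle Na\rangle,N)$ with $a\models p$ lies in $\cK$, and by (BP) it $\BP$-embeds into a beautiful pair $\cM$ over $N_\triv$. Then $\tp(a/P(\cM))=p|P(\cM)$, and by elementarity $d_p\varphi$ is still not defined by any $\psi(y,c)$ with $c\in P(\cM)$. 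Hence $T_\BP(\cK)$ proves, for every $\psi$, the sentence
\[\exists x\,\forall z\in P\;\neg\forall y\in P\,\bigl(\varphi(x,y)\leftrightarrow\psi(y,z)\bigr).\]
The set of these sentences is closed under the coding. By compactness and saturation, a single $a$ in a saturated model $\cM$ realizes all of them simultaneously. Since $\cM\in\cK$, however, $\tp(a/P(\cM))$ is definable, which is a contradiction.

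For (2), (1) gives, for each $\varphi$, a uniform $\psi_\varphi(y,z)$. We send $p\in\cF_X(\cU)$ to the tuple of canonical parameters $\ulcorner d_p\varphi\urcorner$, as elements of the imaginary sorts $z/E_\varphi$ where $E_\varphi$ is equivalence of the defined sets. This map is injective, and it is compatible with finite subfamilies of formulas. The image is the intersection, over finite sets $\Phi$ of formulas, of its projections $\pi_\Phi(\text{image})$.

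The main obstacle is showing that each $\pi_\Phi(\text{image})$ is $A$-definable, rather than merely type-definable. Our first attempt would use beauty: $e\in\pi_\Phi(\text{image})$ iff, in a saturated pair $\cM$, there is $a\in X$ with $\varphi(a,y)\leftrightarrow\psi_\varphi(y,e_\varphi)$ on $P$ for all $\varphi\in\Phi$. This is a single $\cL_P$-formula in $e$ ranging over $P$. Indeed, any $\BP$-point of $\cM$ has type in $\cF$, and conversely any $p\in\cF$ with parameters in $P(\cM)$ is realized in $\cM$ by (BP). By (3), this formula is equivalent on $P$ to an $\cL$-formula, which is $A$-definable by automorphism invariance. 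The transition maps between these projections are then definable and surjective, which gives the strict pro-definable structure.
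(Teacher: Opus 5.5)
The paper gives no proof of this statement; it is quoted from \cite{CuHiYe26}, so I am judging your outline on its own. The overall route is the natural one: saturated models of $T_\BP(\cK)$ are beautiful, you run a back-and-forth via (BP), you get UDDT by compactness, and you cut out the projections of the codes on $P$ by $\cL_P$-formulas and then use purity. Part (1) and the elementarity half of (3) go through. In (1) you first have to $\BP$-embed $N_\triv$ into $\cM$, which uses saturation of $P(\cM)$. There are, however, two genuine gaps.

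\emph{Stable embeddedness in (3).} What you derive is that the $\cL_P$-type of a tuple from $P$ is determined by its $\cL$-type. This holds over $\emptyset$, or over parameters from $P$. It only says the induced structure is pure with respect to parameters from $P$. It does not give stable embeddedness, which concerns $\cL_P$-definable subsets of $P^n$ with parameters from outside $P$. For instance, a predicate carrying a generic bipartite graph to its complement has pure induced $\emptyset$-structure but is not stably embedded. Appealing to an automorphism criterion does not close this. You would have to lift an arbitrary $\cL$-automorphism of $P(\cM)$, which is not small, to $\cM$, and (BP) alone does not control the $P$-part of the extensions.

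The missing ingredient is the definability of $\tp(B/P(\cM))$. Take a small $\cB\subseteq_\BP\cM$ and $c,c'$ from $P(\cM)$ with the same $\cL$-type over $P(\cB)$. Choose $\tau\colon P_1\to P_1'$ elementary over $P(\cB)$ between small elementary substructures of $P(\cM)$, with $\tau(c)=c'$. Since $\tp(B/P(\cM))$ is $P(\cB)$-definable, $\id_B\cup\tau$ extends to an $\cL_P$-isomorphism $(\langle BP_1\rangle,P_1)\cong(\langle BP_1'\rangle,P_1')$ of $\BP$-substructures. This isomorphism is therefore elementary, so $\tp_{\cL_P}(c/B)$ is determined by $\tp_\cL(c/P(\cB))$. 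This is exactly the pattern of the proof of Theorem~\ref{T:Keisler}(3).

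\emph{Closedness of the image in (2).} Your claim that the image is the intersection of the preimages of its finite projections is not automatic. Take a compatible family $(e_\varphi)_\varphi$ all of whose finite subfamilies are codes of types in $\cF_X(\cU)$. It does define a complete global definable type on $X$, but nothing you have said puts that type in $\cF$. Membership in a natural class is in general not determined by finitely many definitions, so as written you only get a pro-definable set containing the codes of $\cF_X(\cU)$.

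Beauty transfer has to be used once more here. Identify $\cU$ with $P(\cM)$ for a saturated $\cM\models T_\BP(\cK)$. The partial $\cL_P$-type
\[
\{x\in X\}\cup\{\forall y\in P\,(\varphi(x,y)\leftrightarrow\psi_\varphi(y,e_\varphi))\}_\varphi
\]
is finitely satisfiable, by your own description of the projections. It is therefore realized by some $a\in M$. Since $\cM\in\cK$, $\tp(a/P(\cM))\in\cF$, and its code is $(e_\varphi)_\varphi$. With these two additions the argument is complete.
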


\subsection*{Examples of theories of beautiful pairs}

Assuming that $T$ has UDDT($\cF^{\Def}$), it is easy to see that the class of stably embedded pairs of models of $T$ is elementary (see, e.g., \cite{cubi-ye}). We let $T_{\SE}$ be the $\cL_P$-theory of this class.

All the examples we will discuss in what follows will have UDDT($\cF^{\Def}$). Let us start with a remark, illustrating that things may be more complicated than one might expect.

\begin{remark}
\begin{enumerate}
    \item In \cite[p.\ 14]{CuHiYe26} a theory $T$ is constructed in which $\cK_{\Def}$ has AP and beauty transfer but not EP. 
    \item Building on the example from (1), Mennuni and the second author constructed a theory in which $\cK_{\Def}$ does not have AP \cite{hils-mennuni2}.
\end{enumerate}
\end{remark}

The following is due to Poizat \cite{poizat} (see \cite[Section~3.1]{CuHiYe26} for a discussion).

\begin{fact}\label{fact:stable-nfcp}
Assume $T$ is stable. Then $\cK_{\Def}$ has AP and EP. Moreover, $\cK_{\Def}$ has beauty transfer if and only if $T$ is nfcp.    
\end{fact}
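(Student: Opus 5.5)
This is Poizat's result (Fact~\ref{fact:stable-nfcp}), and I would prove it in the order: AP, then EP, then the nfcp equivalence. Throughout I use that in a stable $T$ every type over a model is definable. So for models $M\preccurlyeq N$, the pair $(N,M)$ lies in $\cK_{\Def}$, and for $\cA\in\cK_{\Def}$ with $P(\cA)\preccurlyeq B$ the type $\tp(A/P(\cA))|B$ is the unique nonforking (heir $=$ coheir) extension. In stable language, $\cA\subseteq_{\BP}\cB$ means $A\subseteq B$, $P(\cA)\subseteq P(\cB)$, and $A\ind_{P(\cA)}P(\cB)$.

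\textbf{AP.} Let $\cA\subseteq_{\BP}\cB$ and $\cA\subseteq_{\BP}\cC$. I will first choose a model $M\succcurlyeq P(\cB),P(\cC)$ such that $P(\cB)\ind_{P(\cA)}P(\cC)$ and $M$ is independent from $BC$ over the appropriate sets. Then I place $B$ and $C$ so that
\[ B\ind_{P(\cB)}M \quad\text{and}\quad C\ind_{P(\cC)}MB, \]
and set $\cD=(\langle BC\rangle,M)$. To see that $\cB\subseteq_{\BP}\cD$, I need $B\ind_{P(\cB)}M$, which holds by construction. For $\cC\subseteq_{\BP}\cD$ I need $C\ind_{P(\cC)}M$, which also holds. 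One must still check that the $P$-parts are consistent: $P(\cD)\cap B$ should equal $P(\cB)$. This follows from independence, since an element of $B\cap M$ is algebraic over, hence lies in, the model $P(\cB)$. Finally, $\tp(BC/M)$ is definable because $T$ is stable.

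\textbf{EP.} Given $\cA$ and a consistent $\varphi(x)$, I extend $\cA$ to a model $N$ containing $A$ with $N\ind_{P(\cA)}\cdots$; more simply, I take $N\succcurlyeq A$ realising $\varphi$ with $N\ind_{A}$-generic. Then $(N,P(\cA))$ works, since $\tp(N/P(\cA))$ is definable and the $\BP$-condition is vacuous here because $P$ is unchanged.

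\textbf{nfcp $\Leftrightarrow$ beauty transfer.} This is the main step, and I would route it through Fact~\ref{fact:main-beauty}(1) and Poizat's classical argument. For ($\Leftarrow$): beauty transfer gives UDDT, i.e.\ uniform definability of types over models. In a stable theory this is equivalent to nfcp, by Shelah's characterisation that nfcp $\Leftrightarrow$ every $\varphi$-type over a model is defined by an instance of a fixed formula $d\varphi(y,z)$ (elimination of $\exists^\infty$ in $T^{eq}$-style uniformity). For ($\Rightarrow$): under nfcp, the statement ``$P(M)\preccurlyeq M$ and every finite tuple $a$ of $M$ has its $\varphi$-type over $P$ defined by some instance of $d\varphi$ with parameters in $P$'' is first order. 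I would then show that a $\lambda$-saturated model of $T_{\BP}$ is $\lambda$-beautiful. Given $\cA\subseteq_{\BP}\cM$ and $\cA\subseteq_{\BP}\cB$ with $|B|<\lambda$, I realise $\tp(B/A)$ in $M$ together with the conditions that each $\varphi$-definition over $P(\cM)$ is the uniform one with parameters $c$ realising $\tp(\text{def.\ params}/P(\cA))|P(\cM)$. The nfcp hypothesis is what makes these conditions finitely satisfiable in the beautiful (hence $T_{\BP}$-modelling) pair; to see this I would use that finitely many conditions are witnessed in some $\lambda$-beautiful pair, which has the same theory.

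The main obstacle is the nfcp direction. In particular, verifying that ``$\tp(b/P)$ is the nonforking extension'' is type-definable in $\cL_P$ requires exactly the uniform definitions that nfcp provides, and without nfcp it fails via the finite cover property witness.
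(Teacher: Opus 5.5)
\textbf{Main gap: you have misidentified what nfcp provides.} Uniform definability of $\varphi$-types over models holds in \emph{every} stable theory. The $\varphi$-definition of a complete $\varphi$-type over a model is a positive Boolean combination of boundedly many instances $\varphi(a_i,y)$, with the bound depending only on $\varphi$. So UDDT is automatic for stable $T$; it holds, for instance, for an equivalence relation with exactly one class of each finite size, which has fcp. This has two consequences:
\begin{itemize}
\item Your ($\Leftarrow$) argument through Fact~\ref{fact:main-beauty}(1) yields nothing.
\item In ($\Rightarrow$), the sentence you call first order ``under nfcp'' is first order anyway, and so is the type-definability of ``$\tp(b/P)$ is the nonforking extension''.
\end{itemize}

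What you need is Shelah's characterization. A stable $T$ is nfcp iff for every $\varphi$ there is $k$ such that every $k$-consistent set of instances of $\varphi$ and their negations is consistent. Equivalently, for every $\varphi$, the a priori only type-definable set $G_\varphi$ of parameters $c$ for which $d\varphi(y;c)$ defines a consistent $\varphi$-type is definable.

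\textbf{How ($\Leftarrow$) should go.} Take a sufficiently saturated $(M,P)\models T_\BP$; it is beautiful by beauty transfer. Applying (BP) to a small trivial pair $(P_0,P_0)$ with $c\in P_0\preccurlyeq P$ shows that, for $c\in P$,
\[
c\in G_\varphi \iff (M,P)\models\exists x\,\forall y\,\bigl(P(y)\to(\varphi(x,y)\leftrightarrow d\varphi(y,c))\bigr).
\]
So $G_\varphi\cap P$ is both $\cL_P$-definable and type-definable. Saturation reduces it to finitely many of the defining $\cL$-conditions, and $P\preccurlyeq\cU$ transfers this to $\cU$. Hence $G_\varphi$ is definable and $T$ is nfcp.

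\textbf{How ($\Rightarrow$) should go.} First, the definition parameters must lie in $P(\cM)$, so they cannot ``realize a type over $P(\cM)$''. Instead, embed $P(\cB)$ into $P(\cM)$ over $A$, using saturation of $P(\cM)$ and definability of $\tp(A/P(\cM))$ over $P(\cA)$. Then show finite satisfiability in $\cM$ of $\tp(B/AP(\cB))$ together with the conditions $\forall y\in P\,(\varphi(x,y)\leftrightarrow d\varphi(y,c_\varphi))$, where $c_\varphi\in P(\cB)$.

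Saying that finitely many conditions are ``witnessed in a beautiful pair with the same theory'' transfers nothing by itself. You must exhibit an $\cL_P$-sentence true in beautiful pairs whose hypothesis is a first-order property of the parameters holding in $\cM$; this is exactly where nfcp enters. With $k$ given by nfcp for the relevant formula, every beautiful pair satisfies
\[
\forall a\,\forall e,c\in P\,\Bigl[\forall y_1\ldots y_k\in P\;\exists x\,\Bigl(\theta(x,a,e)\wedge\bigwedge_{i\le k}\bigl(\varphi(xa,y_i)\leftrightarrow d\varphi(y_i,c)\bigr)\Bigr)\;\rightarrow\;\exists x\,\Bigl(\theta(x,a,e)\wedge\forall y\in P\,\bigl(\varphi(xa,y)\leftrightarrow d\varphi(y,c)\bigr)\Bigr)\Bigr]
\]
Here $\theta(x,a,e)\in\tp(B/AP(\cB))$, and finitely many formulas have been coded into one $\varphi$. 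To verify the sentence, realize the $k$-consistent, hence consistent, set in $\cU$, then pull the realization into the pair by (BP) over a small $\BP$-substructure $(\langle P_0a\rangle,P_0)$ with $e,c\in P_0$. In $\cM$ the hypothesis holds for $a$ from $A$ and $e,c$ from $P(\cB)$: it is witnessed by $B$ over $P(\cB)$, and $\tp(a/P(\cM))$ is defined over $P(\cA)$. Without nfcp, no such first-order hypothesis is available.

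\textbf{AP placement.} As written it is inconsistent. Since $A\subseteq B\cap C$, the condition $C\ind_{P(\cC)}MB$ gives $A\ind_{P(\cC)}A$, hence $A\subseteq P(\cC)$, i.e.\ $A=P(\cA)$. The fix:
\begin{itemize}
\item Keep $A$ fixed and choose $M\supseteq P(\cB)\cup P(\cC)$ with $A\ind_{P(\cA)}M$.
\item Realize $\tp(B/P(\cB))|M$ and $\tp(C/P(\cC))|M$ over $A$. These are compatible, since both restrict on $A$ to $\tp(A/P(\cA))|M=\tp(A/M)$.
\item Take $\cD=(\langle BCM\rangle,M)$, since $\langle BC\rangle$ need not contain $M$.
\end{itemize}
Your EP argument is fine.
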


Observe that, trivially, $\cK_\triv$ has AP, EP and beauty transfer in any theory $T$.

\begin{example}[\mbox{$\ACF$ \cite[Proposition~3.1.3]{CuHiYe26}}]\label{Ex:ACF}
Let $T$ be a completion of the $\cLr$-theory $\ACF$ of algebraically closed fields. Then $T_{\SE}$ has two completions, both corresponding to $T_\BP(\cK)$ for a natural class of \sepa-pairs with AP, EP and beauty transfer. 

The theory of trivial pairs corresponds to $T_\BP(\cK_\triv)$, and the theory of non-trivial pairs to $T_\BP=T_\BP(\cK_{\Def})$.
\end{example}

Let $\cLog=\{0,+,<\}$ be the language of ordered groups, and let $\DOAG$ be the $\cLog$-theory of non-trivial divisible ordered abelian groups. It is well-known that $\DOAG$ is complete with quantifier elimination, so in particular $o$-minimal. Thus, by the Marker-Steinhorn Theorem \cite{MS}, $\DOAG_{\SE}$ equals the theory of pairs of models $(M,P(M))$ of $\DOAG$ such that $P(M)$ is Dedekind complete in $M$. 

Let $\cF^{\bdd}$ be the natural class of all bounded definable types, i.e., definable types orthogonal to the type $p_{+\infty}$ at infinity, and let $\cF^{\OOnv}$ be the natural class of all definable types orthogonal to the type $p_{0^+}$ of an infinitesimal element. We denote by $\cK_{\bdd}$ and $\cK_{\OOnv}$ the corresponding natural classes of \sepa-pairs in $\DOAG$. Consider the following axioms in the language of pairs:

\begin{align*}
\label{infty}\tag{$E_{\infty}$} & \exists x\forall y(P(y) \to y<x);  \\
\label{0+}\tag{$E_{0^+}$}  & \exists x\forall y((P(y) \wedge 0<y) \to 0<x<y).   
\end{align*}

\begin{example}[\mbox{$\DOAG$ \cite[Theorem~4.1.1]{CuHiYe26}}]\label{Ex:DOAG}
For $T=\DOAG$, the theory $T_{\SE}$ has four completions, corresponding to $T_\BP(\cK)$ for $\cK$ equal to $\cK_{\triv}$, $\cK_{\bdd}$, $\cK_{\OOnv}$ and $\cK_{\Def}$. All these natural classes of \sepa-pairs have AP, EP and beauty transfer. Moreover,
\begin{enumerate}
    \item $T_\BP(\cK_\triv)$ is axiomatized by $T_{\SE}\cup\{\neg E_{\infty},\neg E_{0^+}\}$.
    \item $T_\BP(\cK_\bdd)$ is axiomatized by $T_{\SE}\cup\{\neg E_{\infty},E_{0^+}\}$.
    \item  $T_\BP(\cK_\OOnv)$ is axiomatized by $T_{\SE}\cup\{E_{\infty},\neg E_{0^+}\}$.
    \item  $T_\BP$ is axiomatized by $T_{\SE}\cup\{E_{\infty},E_{0^+}\}$.
\end{enumerate}
\end{example}

Next we will discuss beautiful pairs for $\ACVF$. Recall that if $K\subseteq L$ is an extension of valued fields and $(a_,\ldots,a_n)\in L^n$, the tuple $(a_,\ldots,a_n)$ is called \emph{$K$-valuation independent} if $v(\sum_{i=1}^n c_ia_i) = \min_{i}(v(c_ia_i))$ for every $K$-linear combination $\sum_{i=1}^n c_ia_i$. The extension $L/K$ is called \emph{$vs$-defectless}\label{VS-def}\footnote{This is the same as ``separated'' in W. Baur's and F. Delon's terminology.} if every finite-dimensionl $K$-vector subspace $V$ of $L$ admits a \emph{$K$-valuation basis}, that is, a $K$-valuation independent tuple which spans $V$ over $K$. 

Given a completion $T$ of $\ACVF$ in the 3-sorted language $\cL=\cLgk$, an $\cL_P$-structure $\cA=(A,P(\cA))$ is an \sepa-pair if $P(\cA)\elres\cU\models T$, and $A$ is a substructure of $\cU$ such that $\VF(\mathrm{Frac}(A))/P(\cA)$ is $vs$-defectless, $(\VG(A),\VG(P(\cA)))$ is an \sepa-pair in the sense of $\DOAG$ and $(\RF(A),\RF(P(\cA)))$ is an \sepa-pair in the sense of $\ACF_p$, where $p$ is the characteristic of $\RF(\cU)$. It follows that $T$ has UDDT($\cF^{\Def}$), and $T_{\SE}$ is given by the theory of $vs$-defectless elementary pairs of models of $T$ such that both the value group and the residue field pair are \sepa-pairs.

Now let $T_{\RF}$ be a completion of $(\ACF_p)_{\SE}$ and $\cK_{\RF}$ the natural class with $T_{\RF}=(\ACF_p)_\BP(\cK_{\RF})$ (see Example~\ref{Ex:ACF}). Similarly, let $T_{\VG}$ be a completion of $\DOAG_{\SE}$ and $\cK_{\VG}$ the natural class with $T_{\VG}=\DOAG_\BP(\cK_{\VG})$ (see Example~\ref{Ex:DOAG}). Then the class $\cK(\cK_{\RF},\cK_{\VG})$ of \sepa-pairs $\cA$ in $T$ such that $(\RF(A),\RF(P(\cA)))\in\cK_{\RF}$ and $(\VG(A),\VG(P(\cA)))\in\cK_{\VG}$ is a natural class. Let $T(T_{\RF},T_{\VG})$ be $T_{\SE}$ together with axioms expressing that the residue field pair is a model of $T_{\RF}$ and the value group pair is a model of $T_{\VG}$.

\begin{example}[\mbox{$\ACVF$ \cite[Theorem~7.1.5 \& Corollary~7.1.6]{CuHiYe26}}]\label{Ex:ACVF}
For $T$ a completion of $\ACVF$, the theory $T_{\SE}$ has eight completions, corresponding to $T_\BP(\cK)$ for $\cK$ equal to $\cK(\cK_{\RF},\cK_{\VG})$ for some $\cK_{\RF}\in\{\cK_{\RF}^\triv,\cK_{\RF}^{\Def}\}$ and $\cK_{\VG}\in\{\cK_{\VG}^\triv,\cK_{\VG}^\bdd,\cK_{\VG}^\OOnv,\cK_{\VG}^\Def\}$. All these natural classes of \sepa-pairs have AP, EP and beauty transfer. Moreover, $T_\BP(\cK(\cK_{\RF},\cK_{\VG})))=T(T_{\RF},T_{\VG})$.
\end{example}

\section{An analogue of Keisler's theorem for separable extensions}\label{S:Keisler}
A theorem of Keisler's from \cite{Kei64} asserts that the theory of pairs of fields $(K,P(K))$, assuming $K\models\ACF$ and the extension $K/P(K)$ is infinite, is determined by the $\cLr$-theory $T$ of $P(K)$. In their recent work \cite{DeKaNe24}, d'Elb\'ee, Kaplan and Neuhauser have undertaken a comprehensive study of the theory of $(K,P(K))$ which they denote by $\ACF_T$, establishing in particular transfer results for model-theoretic tameness properties. In this section, we prove analogs of some of these results in the separable category.

\begin{notation}
    \begin{itemize}
    \item Given fields $K\subseteq L,M\subseteq\cU$ of characteristic $p>0$, we write $L\ind_K^{\ld,p}M$ as an abbreviation for $[L\ind_K^{p}M \text{ and }L\ind_K^{\ld}M]$.
        \item We denote by $\cL:=\cLr$ the language of rings, and we set $\cL_P:=\cL\dot{\cup}\{P\}$.
    \end{itemize}
\end{notation}

\begin{definition}
    Let $p$ be a prime number, $T$ a complete $\cL$-theory of fields of characteristic $p$, and let $e\in\N\cup\{\infty\}$. \begin{enumerate}
        \item We define $\SCF_T$ to be the $\cL_P$-theory expressing the following for an $\cL_P$-structure $(K,P(K))$:
    \begin{enumerate}
        \item $(K,P(K))$ is a pair of fields with $K/P(K)$ a separable extension,
        \item $P(K)\models T$,
        \item $K$ is separably closed, and
        \item $[K:P(K)]=\infty$.
            \end{enumerate}
            \item We define $\SCF_{T,e}$ to be the theory $\SCF_T$ together with the axiom  $e(K/P(K))=e$.
        \item Let $(K,P(K))\models \SCF_T$. We call a substructure $(K_0,P(K_0))\subseteq (K,P(K))$ \emph{good} if the following properties hold:
        \begin{enumerate}[(i)]
            \item $K_0$ is a subfield of $K$,
            \item $K/K_0$ is separable,
            \item $P(K)/P(K_0)$ is separable, and
            \item $K_0\ind^{\ld,p}_{P(K_0)}P(K)$.
            \end{enumerate}
            \item An isomorphism between good substructures $(K_0,P(K_0))$ and $(K_1,P(K_1))$ of models of $\SCF_T$ is called \emph{good} if its restriction to $P(K_0)$ is elementary in the sense of $T$.
    \end{enumerate}
  
    \end{definition}

    \begin{remark}
        If $(K_0,P(K_0))\subseteq (K,P(K))$ is a good substructure of a model of $\SCF_T$, then $K_0/P(K_0)$ is separable, as both $K/P(K)$ and $P(K)/P(K_0)$ are separable by assumption.
    \end{remark}

For $M_0\subseteq M_1,M_2\subseteq \cU\models \SCF_T$, where $M_i$ is a good substructure of $\cU$ for $i=0,1,2$ we define 
\[M_1\ind^*_{M_0}M_2\,\,\,\,\, :\Leftrightarrow \,\,\,\,\,\left[M_1
\ind^{\ld,p}_{M_0P(\cU)}M_2\text{ and }P(M_1)\ind^{\ld,p}_{P(M_0)} P(M_2)\right]\]




\begin{lemma}\label{L:Abstract-ind}
Assume that $A_0\subseteq A_i\subseteq\cU=(U,P(U))$ for $i=1,2$ and  that $\ind^+$ is a symmetric ($A\ind^+_BC$ iff $C\ind^+_{B}A$) and fully transitive (for $C\subseteq D$ one has $A\ind^+_BD$ iff $[A\ind^+_BC$ and $A\ind^+_{BC}D]$) ternary relation on subsets of $U$. Assume that $A_i\ind^+_{P(A_i)}P(U)$ for $i=0,1,2$, $A_1\ind^+_{A_0P(U)}A_2$ and $P(A_1)\ind^+_{P(A_0)}P(A_2)$. Then the following properties hold:
\begin{enumerate}[(i)]
\item $A_1\ind^+_{A_0}A_2$\label{item:abstr ind ohne P}
    \item $A_1A_2\ind^+_{P(A_1)P(A_2)}P(U)$\label{item:abstract product good}
\end{enumerate}
\end{lemma}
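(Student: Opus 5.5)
Throughout, write $P:=P(U)$ and $P_i:=P(A_i)$. I would only use symmetry and full transitivity, and I would treat a union of sets as the base (so $A_0P$ means $A_0\cup P$). Since $A_0\subseteq A_i$, we have $P_0\subseteq P_i\subseteq P$, and these inclusions make the bases line up.

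For (i), the plan is to enlarge $A_2$ to $A_2P$ and then cut back down.
\begin{itemize}
\item[(a)] From $A_2\ind^+_{P_2}P$, symmetry gives $P\ind^+_{P_2}A_2$.
\item[(b)] Combining $A_1\ind^+_{A_0P}A_2$ with $A_1\ind^+_{A_0}P$ via full transitivity would give $A_1\ind^+_{A_0}A_2P$. Restricting to the subset $A_2$ then yields $A_1\ind^+_{A_0}A_2$.
\item[(c)] This reduces (i) to proving $A_1\ind^+_{A_0}P$.
\end{itemize}
To get $A_1\ind^+_{A_0}P$, start from $A_1\ind^+_{P_1}P$. I want to move the base from $P_1$ to $A_0$. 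Apply transitivity in the form: $A_1\ind^+_{P_1}P$ implies $A_1\ind^+_{P_1A_0}P$ (base enlargement inside the left set is allowed, because $A_0\subseteq A_1$). To justify this, I would note that full transitivity with $C=A_0\subseteq A_1$ on the symmetric side gives $P\ind^+_{P_1}A_1 \Rightarrow P\ind^+_{P_1A_0}A_1$. Since $P_1A_0\subseteq A_1$ and $P_1\subseteq P$, I then need to drop the $P_1$ from the base. For that I would use $A_0\ind^+_{P_0}P$ together with $P_1\subseteq P$, which gives $A_0\ind^+_{P_0P_1}P$ by transitivity. Here I expect care with how transitivity mixes bases: the honest version is to chain $P\ind^+_{P_0}A_0$ and $P\ind^+_{A_0}A_1$ (equivalent to what is wanted) using base monotonicity-type consequences of full transitivity. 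This bookkeeping of bases is the main obstacle, and I would write each step explicitly with the form $A\ind^+_B D \iff A\ind^+_BC \wedge A\ind^+_{BC}D$.

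For (ii), I would build up over $P$ in stages.
\begin{itemize}
\item[(a)] From $P_1\ind^+_{P_0}P_2$ and $P_i\ind^+_{P_0}\dots$ one gets the independence of $P_1P_2$ over the pieces. Concretely, I want $A_1A_2\ind^+_{P_1P_2}P$.
\item[(b)] By full transitivity on the left (after symmetry), it suffices to show $A_2\ind^+_{P_1P_2}P$ and $A_1\ind^+_{A_2P_1P_2}P$.
\item[(c)] The first follows from $A_2\ind^+_{P_2}P$ by enlarging the base by $P_1\subseteq P$, which transitivity allows when the added set lies on the right side.
\item[(d)] For the second, I would derive from (i) and $A_1\ind^+_{A_0P}A_2$ that $A_1\ind^+_{P_1A_2}P$ holds. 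This comes from combining $A_1\ind^+_{P_1}P$ with $A_1\ind^+_{A_0P}A_2$ to get $A_1\ind^+_{P_1}PA_2$, and then applying transitivity to split it as $A_1\ind^+_{P_1}A_2 \wedge A_1\ind^+_{P_1A_2}P$.
\item[(e)] Finally, since $P_2\subseteq P$, enlarging the base gives $A_1\ind^+_{A_2P_1P_2}P$.
\end{itemize}
The hypothesis $P_1\ind^+_{P_0}P_2$ enters when I verify the splitting at the level of the predicates, namely that nothing is lost when passing between $P_0$-bases and $P_1P_2$-bases.
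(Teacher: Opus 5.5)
There is a genuine gap in your proof of (i). Step (c) reduces it to $A_1\ind^+_{A_0}P$, and that statement is false in general, so the base bookkeeping you flag as the main obstacle cannot be completed. Take $\ind^+=\ind^{\ld}$, subfields $P_0\subsetneq P_1\subseteq P$, and set $A_0=A_2=P_0$, $A_1=P_1$. Every hypothesis holds trivially, but $A_1\ind^+_{A_0}P$ reads $P_1\ind^{\ld}_{P_0}P$. This fails for any $a\in P_1\setminus P_0$, since $1,a$ are $P_0$-independent but $P$-dependent.

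Symmetry and full transitivity let you enlarge a base, but shrinking it costs an extra independence. Removing $P_1$ from the base in $P\ind^+_{A_0P_1}A_1$ requires $P\ind^+_{A_0}P_1$, which is exactly what fails here. A related symptom is that your argument for (i) never uses $P_1\ind^+_{P_0}P_2$. That hypothesis is indispensable: for $A_i=P_i\subseteq P$ ($i=0,1,2$), all other hypotheses hold trivially and (i) becomes literally $P_1\ind^+_{P_0}P_2$.

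The repair, which is the paper's route, is to work over $A_0P_1$. Use the predicate hypothesis only to remove $P_1$ from the base relative to $A_2$, not relative to $P$.
\begin{enumerate}
\item From $A_1\ind^+_{P_1}P$, base enlargement (using $A_0\subseteq A_1$) gives $A_1\ind^+_{A_0P_1}P$.
\item Since $A_0P_1P=A_0P$, transitivity with $A_1\ind^+_{A_0P}A_2$ gives $A_1\ind^+_{A_0P_1}A_2P$, hence $A_1\ind^+_{A_0P_1}A_2$.
\item Next, $P\ind^+_{P_2}A_2$ gives $P_1\ind^+_{P_2}A_2$. Together with $P_1\ind^+_{P_0}P_2$ this gives $P_1\ind^+_{P_0}A_2$, and then $P_1\ind^+_{A_0}A_2$.
\item Transitivity with $P_1\subseteq A_1$ now yields (i).
\end{enumerate}

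Step (d) of your (ii) has the same defect. Transitivity would combine $A_1\ind^+_{P_1}P$ with $A_1\ind^+_{P}A_2$, not with $A_1\ind^+_{A_0P}A_2$. Moreover, the claimed $A_1\ind^+_{P_1}PA_2$ is false whenever $A_0\not\subseteq P$: for $\ind^{\ld}$, any $a\in A_0\setminus P$ makes $1,a$ independent over $P_1$ but dependent over $PA_2$. Replace it by $A_1\ind^+_{A_0P_1}A_2P$ from step 2 above and enlarge the base by $A_2$; this gives $A_1\ind^+_{A_2P_1}P$. After that, your steps (b), (c), (e) finish (ii) as in the paper.
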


\begin{proof}
The proof is straight forward. We give it for convenience of the reader. 

By $A_1\ind^+_{P(A_1)}P(U)$, we get $A_1\ind^+_{A_0P(A_1)}P(U)$, which together with $A_1\ind^+_{A_0P(U)}A_2$ yields 
\begin{equation}\label{Eq:first}A_1\ind^+_{A_0P(A_1)}A_2P(U)
\end{equation}

Next, by $P(U)\ind^+_{P(A_2)}A_2$ we get $P(A_1)\ind^+_{P(A_2)}A_2$, which together with $P(A_1)\ind^+_{P(A_0)}P(A_2)$ yields 
\begin{equation}\label{Eq:second}P(A_1)\ind^+_{P(A_0)}A_2
\end{equation}

By \eqref{Eq:first} we have $A_1\ind^+_{A_0P(A_1)}A_2$, and from \eqref{Eq:second} we infer $P(A_1)\ind^+_{A_0}A_2$. Together, this yields $A_1\ind^+_{A_0}A_2$, proving \ref{item:abstr ind ohne P}.

Part \ref{item:abstract product good} is shown as follows. Part \ref{item:abstr ind ohne P} yields $A_1\ind^+_{A_2P(A_1)}P(U)$, and from $A_2\ind^+_{P(A_2)}P(U)$ we infer $A_2\ind^+_{P(A_2)P(A_1)}P(U)$. Together, these imply \ref{item:abstract product good}.
\end{proof}

\begin{lemma}\label{L:product}
Let $M_0\subseteq M_1,M_2\subseteq \cU\models \SCF_T$, where $M_i$ is a good substructure of $\cU$ for $i=0,1,2$. Assume that $M_1\ind^*_{M_0}M_2$. Then the following properties hold:
\begin{enumerate}[(a)]
    \item $M_1\ind^{\ld,p}_{M_0}M_2$.\label{item:ind-wo-P}
    \item $M_1M_2$ is a good substructure of $\cU$.\label{item:product-good}
\end{enumerate}
\end{lemma}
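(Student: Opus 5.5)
The plan is to apply Lemma~\ref{L:Abstract-ind} twice: once with $\ind^+$ taken to be $\ind^{\ld}$ (linear disjointness of the generated fields, inside the ambient field $U$), and once with $\ind^+$ taken to be $\ind^{p}$ ($p$-independence in $U$). For this, I first check that both relations are symmetric and fully transitive on subfields of $U$, with subsets replaced by the fields they generate.

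\emph{Linear disjointness.} Symmetry is classical. Full transitivity is the standard tower property: for $B\subseteq C\subseteq D$ with $C$ a field containing $B$, $A\ind^{\ld}_B D$ holds if and only if $A\ind^{\ld}_B C$ and $AB\ind^{\ld}_{C}D$. Here the base $BC$ means the compositum, and the relation is read as $A(BC)\ind^{\ld}_{BC} D$.

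\emph{$p$-independence.} These properties follow because $\cl^p_U$ is a pregeometry, so the associated independence notion is symmetric and satisfies full transitivity for closed sets in the usual matroid way. We also need that closing a base under the field operations does not change $\cl^p$, since $\cl^p_U(A)=U^p(A)$.

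Next I verify the hypotheses of Lemma~\ref{L:Abstract-ind} with $A_i:=M_i$.
\begin{itemize}
\item The condition $M_i\ind^{\ld,p}_{P(M_i)}P(U)$ is exactly clause (iv) of goodness.
\item The conditions $M_1\ind^{\ld,p}_{M_0P(U)}M_2$ and $P(M_1)\ind^{\ld,p}_{P(M_0)}P(M_2)$ are the two parts of the definition of $M_1\ind^*_{M_0}M_2$.
\end{itemize}
Part (i) of the lemma, applied for both relations, then yields (a) immediately.

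For (b), I check the four clauses of goodness for $M_1M_2$. We set $P(M_1M_2)=P(M_1)P(M_2)$, the compositum, and interpret $M_1M_2$ as the compositum field; taking the compositum is harmless for both relations.
\begin{itemize}
\item Clause (i) holds by construction.
\item Clause (iv) is part (ii) of Lemma~\ref{L:Abstract-ind}, applied to both relations.
\item Clause (iii): we need $P(U)/P(M_1)P(M_2)$ separable. By Fact~\ref{F:characterize-sep-ext}(3), it suffices to show that a $p$-basis of $P(M_1)P(M_2)$ stays $p$-independent in $P(U)$. Take $p$-bases $B_i$ of $P(M_i)$ over $P(M_0)$ and $B_0$ of $P(M_0)$. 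Using $P(M_1)\ind^p_{P(M_0)}P(M_2)$ inside $U$, together with the separability of $P(U)$ over each $P(M_i)$ and the fact that $U/P(U)$ is separable (so that $p$-independence in $P(U)$ agrees with $p$-independence in $U$ for tuples from $P(U)$), the set $B_0\cup B_1\cup B_2$ is $p$-independent in $U$. Hence it is $p$-independent in $P(U)$, and it $p$-spans $P(M_1)P(M_2)$.
\item Clause (ii): $U/M_1M_2$ is separable. By an analogous argument, a $p$-basis of $M_1M_2$ can be built from a $p$-basis of $M_0$ together with $p$-bases of $M_i$ over $M_0$. Each $M_i$ is separable in $U$, and (a) gives $M_1\ind^p_{M_0}M_2$ in $U$; together these make the union $p$-independent in $U$.
\end{itemize}

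The main subtlety I expect is the compatibility of $p$-independence computed inside $U$ with $p$-independence inside the subfields, and the correct form of transitivity for $\ind^{\ld}$ and $\ind^p$ when bases are composita rather than unions. I would handle this by working throughout with $\cl^p_U$ and fields generated over $U^p$, and by invoking Fact~\ref{F:characterize-sep-ext}(3) whenever passing between a subfield and $U$.
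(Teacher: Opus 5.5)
Your route is the paper's: (a) comes from Lemma~\ref{L:Abstract-ind}(i) applied to $\ind^{\ld}$ and to $\ind^{p}$. For (b), separability of $U/M_1M_2$ and of $P(U)/P(M_1)P(M_2)$ is obtained via $p$-bases, and clause (iv) comes from Lemma~\ref{L:Abstract-ind}(ii).

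One step, however, is not legitimate as written: you say ``we set $P(M_1M_2)=P(M_1)P(M_2)$''. The compositum $M_1M_2$ is to be a substructure of $\cU=(U,P(U))$, so its predicate is not yours to choose. It is forced to be $P(M_1M_2)=M_1M_2\cap P(U)$. The equality $M_1M_2\cap P(U)=P(M_1)P(M_2)$ therefore needs proof. Without it, your verification of clauses (iii) and (iv) is about the field $P(M_1)P(M_2)$, not about $P(M_1M_2)$, so goodness of $M_1M_2$ is not yet established. This identification is also what later uses rely on implicitly. For instance, in the proof of Theorem~\ref{T:preserve-tame}, $\qftp_{\cL_P}(L'K/K)$ is determined only because $P(L'K)=P(L')P(K)$.

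The repair is short, and it is exactly the first thing the paper extracts from Lemma~\ref{L:Abstract-ind}(ii). Set $F:=P(M_1)P(M_2)$, so that $F\subseteq M_1M_2\cap P(U)$. For $\ind^{\ld}$, Lemma~\ref{L:Abstract-ind}(ii) gives $M_1M_2\ind^{\ld}_{F}P(U)$. Linear disjointness over $F$ then forces $M_1M_2\cap P(U)=F$. Indeed, suppose $x\in(M_1M_2\cap P(U))\setminus F$. Then $1,x$ are $F$-linearly independent elements of $M_1M_2$, yet $x\cdot 1+(-1)\cdot x=0$ is a nontrivial $P(U)$-linear relation between them. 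Insert this identification before you check clauses (iii) and (iv); with it, your separability and disjointness arguments apply to the correct field $P(M_1M_2)$ and the proof is complete.
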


\begin{proof}
    Part \ref{item:ind-wo-P} follows from Lemma~\ref{L:Abstract-ind}\ref{item:abstr ind ohne P} as $\ind^{\ld}$ and$\ind^p$ are both symmetric and fully transitive. 
    
    As for part \ref{item:product-good}, separability of $\cU/M_1M_2$ follows from the separability of $\cU/M_i$ for $i=0,1,2$ together with $M_1\ind^p_{M_0}M_2$. Moreover, from Lemma~\ref{L:Abstract-ind}\ref{item:abstract product good} we infer $P(M_1M_2)=P(M_1)P(M_2)$. Separability of the extension $P(\cU)/P(M_1M_2)$ thus follows in a similar way, as $P(M_1)\ind^p_{P(M_0)}P(M_2)$. We leave the straight forward verification of the remaining property $M_1M_2\ind^{\ld,p}_{P(M_1M_2)}P(\cU)$ to the reader.
\end{proof}

    \begin{lemma}\label{L:Keisler-main}
    Let $\cK=(K,P(K))\models\SCF_{T,e}$ and $\cK'=(K',P(K'))\models \SCF_{T,e'}$, with $e\leq e'$, $\cK$ countable and $\cK'$ $\aleph_1$-saturated. Let $(K_0,P(K_0))$ be a good substructure of $\cK$ and $f:(K_0,P(K_0))\hookrightarrow\cK'$ a good embedding onto a good substructure of $\cK'$. Then $f$ extends to a good embedding $\tilde{f}:\cK\hookrightarrow\cK'$ with $\tilde{f}(\cK)$ a good substructure of $\cK'$.
    \end{lemma}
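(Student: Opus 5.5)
We prove the statement by a back-and-forth style extension argument in two stages: first extend $f$ on the predicate, then on the big field. Since $\cK$ is countable, it suffices to show the one-step extension property: given a good substructure $(K_0,P(K_0))$ with $K_0$ countable and a good embedding $f$ onto a good substructure of $\cK'$, and given finitely many elements (either from $P(K)$ or from $K$), we can extend $f$ to a good embedding of a larger good substructure containing them, with good image. Iterating over an enumeration of $K$ and taking unions (goodness being preserved under directed unions, as all conditions are finitary) gives $\tilde f$. Here we use that the image of a good embedding automatically satisfies $e(\cdot)$-compatibility via the $\aleph_1$-saturation of $\cK'$ and $e\le e'$.

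\emph{Step 1: extending on $P$.} Let $a$ be a tuple from $P(K)$. Since $f|_{P(K_0)}$ is elementary in $T$ and $P(K')$ is $\aleph_1$-saturated, we realize $\tp_T(a/P(K_0))$ by $a'$ in $P(K')$. Let $P_1=P(K_0)(a)^{\text{(relative separable closure-free)}}$; more precisely let $P_1\preceq$-style closure: we take $P_1$ to be the field generated by $P(K_0)$ and $a$, enlarged so that $P(K)/P_1$ is separable (e.g. close off under the $\lambda$-functions of $P(K)$, which is countable and definable in $T$, so elementarity transfers it). Then set $K_1:=K_0P_1$. Using the goodness $K_0\ind^{\ld,p}_{P(K_0)}P(K)$ we get $K_1\cong K_0\otimes_{P(K_0)}P_1$ as fields with $p$-independence preserved, and $P(K_1)=P_1$; likewise on the $\cK'$ side with $K_0'P_1'$. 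So $f$ extends to a field isomorphism $K_1\to K_1'$ which is good. Checking that $K_1$ is good (separability of $K/K_1$ and $K_1\ind^{\ld,p}_{P_1}P(K)$) is done via Lemma~\ref{L:Abstract-ind} with $\ind^+=\ind^{\ld,p}$, taking $A_0=A_2$-type configurations; this is routine transitivity.

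\emph{Step 2: extending on $K$ with $P$ fixed.} Now suppose $b\in K$ and $(K_0,P(K_0))$ good. Enlarging first by Step 1 we may assume $P(K_0)$ is a model of $T$ large enough... but we cannot make $P(K_0)=P(K)$. Instead, the key case: $b$ with $K_0(b)$, we need $b'$ in $K'$ such that $K_0(b)$ and $K_0'(b')$ are isomorphic, $K'/K_0'(b')$ separable, and $K_0'(b')\ind^{\ld,p}_{P(K_0')}P(K')$, with $P(K_0(b))=P(K_0)$ after possibly first adding to $P(K_0)$ the finitely many elements of $P(K)$ needed (Step 1). Mirroring the Claim in Proposition~\ref{prop:QE-SCVF}, decompose $K_0(b)$ (made separably closed-relative by $\lambda$-closure) into a $p$-independent transcendental part over $K_0P(K)$ and a separable algebraic part. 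For a $p$-independent transcendental $t$ over $K_0P(K)$, use $[K':P(K')]=\infty$, $e\le e'$ and $\aleph_1$-saturation to find $t'\in K'$ which is $p$-independent over $K_0'P(K')$ (when $e$ contributes) or merely transcendental over $K_0'P(K')$ and lying in $K'^p K_0' P(K')$-appropriate position (when no new $p$-independence is allowed). The separable algebraic part then embeds since $K'$ is separably closed, and linear disjointness from $P(K')$ transfers because linear disjointness over $P(K_0)$ of a separable algebraic extension of a purely transcendental one is determined by the field isomorphism type.

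The main obstacle is Step 2's bookkeeping: guaranteeing simultaneously that the new substructure is good in $\cK'$ (linearly disjoint and $p$-independent from $P(K')$) and that $p$-degree is matched. I would handle it by choosing a $p$-basis of $K$ over $K_0P(K)$ ahead of time, first adjoining elements of it (as in the Claim), then arguing that any element of $K$ lies in a separable algebraic extension of $K_0P(K)(\text{basis part})(\text{transcendentals in }K^p)$, and using $\ind^*$ together with Lemma~\ref{L:product} to glue steps.
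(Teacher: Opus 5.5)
Your outer architecture (extend on $P$, then on the big field, with unions of chains) is sound, and Step~1 is fine. The gap is in Step~2, which is the actual content of the lemma.

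\textbf{Where Step~2 fails.} Any good substructure containing $K_0$ and $b$ must have $K$ separable over it. So it must contain the $\lambda$-closure of $K_0(b)$ in $K$, which is in general an infinitely generated extension of $K_0$, possibly of infinite transcendence degree.
\begin{enumerate}
\item This closure has no decomposition into ``a $p$-independent transcendental part and a separable algebraic part''. Mac Lane only gives separating transcendence bases for finitely generated subextensions $K_0(B_0)$. An element of such a basis may be $p$-independent in $K_0(B_0)$ yet $p$-dependent over $K_0P(K)$ in $K$.
\item For exactly these elements you only say the image should lie in a ``$(K')^pK_0'P(K')$-appropriate position''. Which $p$-dependences the image must satisfy depends on the rest of $K$, so this cannot be settled at a finite stage while keeping every intermediate stage good.
\item The fallback in your last paragraph is false. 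If a transcendental $s\in K^p$ is put into the base field, then $s^{1/p}\in K$ is purely inseparable over it, so $K$ is not separable algebraic over $K_0P(K)(\text{basis})(\text{transcendentals in }K^p)$.
\item Your claim that linear disjointness from $P(K')$ ``is determined by the field isomorphism type'' is incorrect; it depends on how the image sits relative to $P(K')$.
\end{enumerate}

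\textbf{The missing idea.} Give up on intermediate good substructures and argue by compactness, after first reducing to $P(K_0)=P(K)$.
\begin{enumerate}
\item Contrary to your remark, this reduction is possible. $P(K)$ is countable and $P(K')$ is $\aleph_1$-saturated, so $f|_{P(K_0)}$ extends to an elementary $f_P\colon P(K)\to P(K')$. Then $f\cup f_P$ is a good embedding of the good substructure $K_0P(K)$ by Lemma~\ref{L:product}.
\item The reduction matters because $P(K')/f(P(K))$ is then regular. That is what turns ``images algebraically independent over $f(K_0)P(K')$'' into linear disjointness from $P(K')$ over $f(P(K))$. Over a $P(K_0)$ that is not relatively algebraically closed in $P(K)$, algebraic independence alone would not suffice.
\item Next, ``$p$-independent over $f(K_0)P(K')$'' is a partial $\cL_P$-type over $f(K_0)$.
\item For finite $A_0\subseteq K$, choose finite $B_0\supseteq A_0$ such that the $p$-dimension of $A_0$ over $K_0$ is the same whether computed in $K_0(B_0)$ or in $K$.
\item Take a $p$-basis $a_1,\ldots,a_h$ of $K_0(B_0)/K_0$ whose first $d$ entries form a $p$-basis of $A_0$ over $K_0$. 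By Mac Lane this is a separating transcendence basis.
\item Send $a_1,\ldots,a_d$ to elements $p$-independent over $f(K_0)P(K')$, using $e\le e'$ and saturation. Send $a_{d+1},\ldots,a_h$ merely to elements algebraically independent over $f(K_0)P(K')$. Extend using that $K'$ is separably closed.
\item Finite satisfiability and $\aleph_1$-saturation then yield a field embedding of all of $K$. The $p$-dependences are respected automatically because the whole field is embedded, so only $p$-independence ever has to be forced.
\end{enumerate}
This is precisely the bookkeeping your Step~2 leaves open.
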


    \begin{proof}
    Let $f:(K_0,P(K_0))\hookrightarrow\cK'$ be a good embedding onto a good substructure of $\cK'$. Note that $(P(K_0),P(K_0))$ and $(P(K),P(K))$ are good substructures of $\cK$, and clearly we have $K_0\ind^*_{P(K_0)}P(K)$. Thus the base change $(K_0P(K),P(K))$ is a good substructure of $\cK$ by Lemma~\ref{L:product}. Moreover, by $\aleph_1$-saturation of $P(K')$, the partial elementary map (in the sense of $T$) $f\upharpoonright_{P(K_0)}:P(K_0)\hookrightarrow P(K')$ extends to an elementary embedding $f_P:P(K)\hookrightarrow P(K')$. Using Lemma~\ref{L:product}\ref{item:ind-wo-P}, $f\cup f_P$ induces a good embedding of $K_0P(K)$ onto a good substructure of $\cK'$. We may thus assume that $P(K_0)=P(K)$.

        \begin{claim}\label{Cl:1}
            Let $\cK=(K,P(K))$ be a pair of fields of characteristic $p$. For every $m\in\N$ there is a partial $\cL_P$-type $\pi_{
            K}(x_1,\ldots,x_m)$ over $K$ expressing, uniformly in every $\cL_P$-extension $(L,P(L))$ of pairs of fields of $\cK$,  that $(x_1,\ldots,x_m)$ is $p$-independent over $KP(L)$.
        \end{claim}

        To prove the claim, for $d\geq1$ set $X_d:=\{\sum_{i=1}^d a_ib_i\mid a_i\in K, b_i\in P(L)\}$. Then $\pi_{K}$ is given by the set of sentences negating, for any $d\geq1$, the existence of $0\neq g_\mu$ and $0\neq h_\mu$ from $X_d$ and $c_\mu\in L$ not all 0, for $\mu\in p^m$, such that
        $$\sum_\mu c_\mu^p \frac{g_\mu}{h_\mu}\overline{x}^{\mu}=0.$$
        This proves Claim~\ref{Cl:1}.

        Now we return to our proof. Assume that $f:(K_0,P(K_0))\hookrightarrow\cK'$ is a good embedding onto a good substructure of $\cK'$, where $(K_0,P(K_0))$ is a good substructure of $\cK$ and $P(K_0)=P(K)$.

          \begin{claim}\label{Cl:2}
            For any finite $A_0\subseteq K$ there is $A_0\subseteq B_0\subseteq K$ with $B_0$ finite and a field embedding $\tilde{f}:\widetilde{K_0}:=K_0(B_0)\hookrightarrow K'$ extending $f$ such that the following properties hold:
            \begin{enumerate}[(i)]
            \item $\tilde{f}(\widetilde{K_0})\ind^{ld}_{f(P(K_0))}P(K')$
            \item Whenever a tuple $\overline{a}$ from $A_0$ is $p$-independent in $K$ over $K_0=K_0P(K)$, then $\tilde{f}(\overline{a})$ is $p$-independent in $K'$ over $f(K_0)P(K')$.
            \end{enumerate}  
                 \end{claim}

To prove Claim~\ref{Cl:2}, choose $B_0\subseteq K$ finite containing $A_0$ such that, setting $\widetilde{K_0}:=K_0(B_0)$, the $p$-dimension of $A_0$ over $K_0$ is the same whether we compute it in $K$ or in $\widetilde{K_0}$. It then follows that for any $A_1\subseteq A_0$ and $a\in A_0$, $a\in\cl_{\widetilde{K_0}}(K_0A_1)$ if and only if $a\in \cl_K(K_0A_1)$. Let $(a_1,\ldots,a_d)$ be a $p$-basis of $A_0$ over $K_0$ (in $\widetilde{K_0}$). Choose $a_{d+1},\ldots,a_h\in \widetilde{K_0}$ such that $\overline{a}:=(a_1,\ldots,a_h)$ is a $p$-basis of $\widetilde{K_0}/K_0$. By \cite[Theorem~15]{Mac39}, $\overline{a}$ is algebraically independent over $K_0$ and $\widetilde{K_0}/K_0(\overline{a})$ is separable algebraic.   

By $\aleph_1$-saturation of $\cK'$ and as $e\leq e'$ by assumption, there is a tuple $(a_1',\ldots,a_h')$ in $K'$ which is algebraically independent over $f(K_0)P(K')$ and such that $(a_1',\ldots,a_d')$ is $p$-independent over $f(K_0)P(K')$. As $\widetilde{K_0}/K_0(\overline{a})$ is separable algebraic and $K'$ is separably closed, $\overline{a}\mapsto\overline{a'}$ extends to a field embedding $\tilde{f}:\widetilde{K_0}\hookrightarrow K'$ extending $f$ and satisfying $\tilde{f}(\widetilde{K_0})\ind^{alg}_{f(P(K_0))}P(K')$. Thus (i) follows, as the extension $P(K')/f(P(K_0))$ is regular (being elementary). Property (ii) holds by construction, proving Claim~\ref{Cl:2}.

The two claims, combined with $\aleph_1$- saturation of $\cK'$, yield the existence of good embedding $\tilde{f}:\cK\hookrightarrow\cK'$ extending $f$. 
        \end{proof}

    \begin{theorem}\label{T:Keisler}
        For any complete theory $T$ of fields of characteristic $p>0$ we have the following.
        \begin{enumerate}
            \item \label{part:complete}The completions of $\SCF_T$ are given by $\SCF_{T,e}$, for $e\in\N\cup\{\infty\}$.
        \item \label{part:good}For a fixed $e\in\N\cup\{\infty\}$, any good isomorphism between good substructures of models of $\SCF_{T,e}$ is an elementary map
        \item \label{part:SE-Keisler}In $\SCF_{T,e}$, $P$ is stably embedded with induced structure a pure field.
        \end{enumerate}
    \end{theorem}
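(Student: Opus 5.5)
The plan is to derive everything from Lemma~\ref{L:Keisler-main} by a standard back-and-forth argument, and to prove part~(\ref{part:good}) first, since parts~(\ref{part:complete}) and~(\ref{part:SE-Keisler}) follow from it.

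For~(\ref{part:good}), let $f\colon (K_0,P(K_0))\to(K_1,P(K_1))$ be a good isomorphism between good substructures of $\cK\models\SCF_{T,e}$ and $\cK'\models\SCF_{T,e}$. By L\"owenheim--Skolem and passing to elementary extensions, we may assume both models are $\aleph_1$-saturated. Take a countable $\cL_P$-elementary substructure $\cK_c\preccurlyeq\cK$ containing a finite tuple of interest. The intermediate goal is to show that $K_0\cap$ (countable piece) can be arranged to be a good substructure of $\cK_c$.

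This arrangement is the delicate point. I would first try to replace $K_0$ by a countable good substructure $K_{00}\subseteq K_0$ that is also good inside $\cK_c$. This should be possible by a closure argument: close under witnesses to separability, to $p$-independence, and to linear disjointness, alternating $\omega$ many times between $K_0$ and an elementary substructure.

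With that in place, Lemma~\ref{L:Keisler-main}, applied with $e=e'$, extends $f\upharpoonright K_{00}$ to a good embedding of $\cK_c$ into $\cK'$ with good image. The symmetric statement applies as well. A back-and-forth between countable elementary substructures then gives that $f$ preserves all $\cL_P$-formulas on finite tuples.

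An alternative, likely the paper's, is to work with $\aleph_1$-saturated models directly. One would show that the system of good isomorphisms between countable good substructures is a back-and-forth system: the lemma's proof, via Claims~\ref{Cl:1} and~\ref{Cl:2}, really only requires countability of the part being extended. I would phrase it that way, checking that Lemma~\ref{L:Keisler-main} applies with $\cK$ replaced by a countable elementary substructure, so that elements can be added one at a time.

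For~(\ref{part:complete}), the prime field gives the common substructure. Let $F$ be the relative algebraic closure of $\F_p$ in the $P$-part; more simply, use $(\F_p,\F_p)$. I have to check that it is good: $K/\F_p$ is separable because $\F_p$ is perfect; $P(K)/\F_p$ is separable; and $\F_p\ind^{\ld,p}_{\F_p}P(K)$ holds trivially. The identity map is good because its restriction to $\F_p$ is elementary in $T$, $T$ being complete and fixing $\mathrm{tp}(\F_p)$. So any two models of $\SCF_{T,e}$ are elementarily equivalent by~(\ref{part:good}). Distinct values of $e$ give distinct theories, since $e(K/P(K))=e$ is axiomatized, and every model of $\SCF_T$ satisfies exactly one $\SCF_{T,e}$. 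Here $e=\infty$ is expressed by infinitely many axioms, each saying $e(K/P(K))\ge n$.

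For~(\ref{part:SE-Keisler}), take $a,b\in P(\cK)$ in a saturated model with the same $\cL$-type in $P(K)$. Consider $(\F_p(a),\F_p(a))$. I would rather use $P(K_0)=\mathrm{dcl}$-closed: take $K_0$ to be an elementary substructure $P_0\preccurlyeq P(K)$ containing $a$, which is a good substructure (trivially, since $P_0\ind^{\ld,p}_{P_0}P(K)$ and separability comes from $P_0\preccurlyeq P(K)$ in $T$). Map it via an $\cL$-elementary map sending $a\mapsto b$ onto $P_1\preccurlyeq P(K)$. This is a good isomorphism, so by~(\ref{part:good}) $\mathrm{tp}_{\cL_P}(a)=\mathrm{tp}_{\cL_P}(b)$. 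Hence every $\cL_P$-definable subset of $P^n$ is $\cL$-invariant. A standard compactness argument, with parameters handled by the same trick over good substructures containing them plus $P_0$, yields that such sets are $\cL(P)$-definable, which is stable embeddedness with pure field structure. The main obstacle throughout is~(\ref{part:good}), specifically verifying that the countable-reduction needed to apply Lemma~\ref{L:Keisler-main} preserves goodness.
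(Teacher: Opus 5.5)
Your architecture is the paper's: (2) follows from Lemma~\ref{L:Keisler-main} by a back-and-forth between countable good substructures of $\aleph_1$-saturated models, (1) follows from (2) via the good substructure $\F_p$, and (3) follows from (2) applied to maps of the form $\id\cup\sigma$. The countable reduction you call delicate is routine. Choose $K_{00}\subseteq K_0$ countable, closed under the $\lambda$-functions and with $K_{00}\ind^{\ld,p}_{P(K_{00})}P(K_0)$; these conditions are transported by $f$ and combine with goodness of $K_0$ and $f(K_0)$ by transitivity. What you leave to ``the same trick'' in (3) is what the paper actually writes out. For $(K_0,P(K_0))\preccurlyeq\cK$ and $P(K_0)\preccurlyeq A_i\preccurlyeq P(K)$, linear disjointness of $K_0$ and $P(K)$ over $P(K_0)$ makes $\id_{K_0}\cup\sigma$ a well-defined field isomorphism $K_0A_1\cong K_0A_2$ with $P(K_0A_i)=A_i$. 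The $K_0A_i$ are good by Lemma~\ref{L:product}.

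The step that fails is your justification in (1) that distinct $e$ give distinct theories because each ``$e(K/P(K))\ge n$'' is a first-order axiom. This holds if $T$ has finite imperfection degree $e_T$: with $\mathbf c$ a finite $p$-basis of $P(K)$ one has $K^pP(K)=K^p[\mathbf c]$. It fails if $e_T=\infty$, as the following argument shows.

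\begin{itemize}
\item Separability gives $K^p\otimes_{P(K)^p}P(K)\hookrightarrow K$. Take $c_\nu\in K$ ($\nu\in p^m$) $P(K)$-linearly independent and $b_1,\dots,b_m\in P(K)$ $p$-independent, and let $\varphi_\nu$ be $P(K)^p$-linear functionals dual to the $\mathbf b^\nu$.
\item Suppose $z=\sum_\nu c_\nu^p\mathbf b^\nu$ equals $\sum_{i\le N}x_i^py_i$ with $x_i\in K$, $y_i\in P(K)$ and $N<p^m$. Applying $\id\otimes\varphi_\nu$ puts every $c_\nu^p$ in the $P(K)^p$-span of $x_1^p,\dots,x_N^p$, contradicting the independence of the $c_\nu^p$.
\item Hence the partial type $\{\psi_N(z)\mid N\in\N\}$, where $\psi_N(z)$ says that $z$ is not such a sum of length $N$, is finitely satisfiable in every model of $\SCF_T$.
\item So every $\omega$-saturated model contains some $z\notin K^pP(K)$, i.e.\ $e(K/P(K))\ge 1$. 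Repeating the argument over a putative finite relative $p$-basis even gives $e(K/P(K))=\infty$.
\end{itemize}

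So for $e_T=\infty$ and finite $e$, the condition $e(K/P(K))=e$ is not elementary. Your passage to $\aleph_1$-saturated extensions in (2) silently moves to $e=\infty$; this is harmless there, since the lemma only needs $e\le e'$. Then (2) together with $\F_p$ shows that $\SCF_T$ is complete in this case. The paper's one-line proof of (1) never asserts distinctness, so this is a defect of the statement when $e_T=\infty$ rather than of the route. Your explicit claim is nonetheless false in that case and must be restricted to $e_T<\infty$.
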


    \begin{proof}
      Part (\ref{part:good}) follows directly from Lemma~\ref{L:Keisler-main}. As for part (\ref{part:complete}), this is a consequence of (\ref{part:good}), as $\Fp$ is a good substructure of any model of $\SCF_{T,e}$. 

      Finally, let us prove part (\ref{part:SE-Keisler}). First, let $K\elres L\models \SCF_{T,e}$, and let $P(K)\elres A_i\elres P(L)$ for $i=1,2$ with $\tp_T(A_1/P(K))=\tp_T(A_2(P(K))$. Denote by $\sigma$ the corresponding field isomorphism between $A_1$ and $A_2$ over $K$. Then the subfields $B_i=KA_i$, for $i=1,2$, are good substructures of $L$ (see the proof of Lemma~\ref{L:Keisler-main}) and $\id_K\cup\sigma:B_1\cong B_2$ is a good isomorphism, where $\sigma:A_1\cong A_2$ is the isomorphism over $P(K)$. Thus, by (\ref{part:good}), we infer $\tp_{\cL_P}(A_1/K)=tp_{\cL_P}(A_2/K)$, proving stable embeddedness of $P$, with induced structure given by $\cL$ (maybe over parameters).

      As for purity, let $A_i\elres P(K)$ for $i=1,2$ such that $\tp_T(A_1)=\tp_
      T(A_2)$, say witnessed by the field isomorphism $\sigma:A_1\cong A_2$. Then $\sigma$ is a good isomorphism, thus an elementary map in the sense of $\SCF_{T,e}$ by (\ref{part:good}). The result follows.
    \end{proof}

    \begin{remark}
        Actually, as in the analogous case of $\ACF_T$ treated in \cite[Section~4]{DeKaNe24}, the results in Theorem~\ref{T:Keisler} hold \emph{resplendently}, i.e., for any expansion of the small field $P(K)$, working with $T$ equal to the theory of $P(K)$ in the expanded language.
    \end{remark}

\begin{theorem}\label{T:preserve-tame}
Assume that $T$ is stable. Then $\SCF_T$ is stable. The same holds for `NIP' in place of `stable'.
\end{theorem}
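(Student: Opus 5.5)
We plan to follow the approach of d'Elbée--Kaplan--Neuhauser for $\ACF_T$ and count types over models, using Theorem~\ref{T:Keisler}\eqref{part:good} as the key tool. Fix a completion $\SCF_{T,e}$. Let $\cK=(K,P(K))\models\SCF_{T,e}$, and let $c$ be a finite tuple in a monster model $\cU$. The first step is to attach to $c$ a good substructure generated by $K$ and $c$ whose isomorphism type over $K$ determines $\tp_{\cL_P}(c/K)$.

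Concretely, we would choose a countable subfield $D\subseteq P(\cU)$ such that $K(c)D\ind^{\ld,p}_{P(K)D}P(\cU)$, the extension $\cU/K(c)D$ is separable, and $P(\cU)/P(K)D$ is separable. Such a $D$ exists by a closure argument: iteratively add the finitely many elements of $P(\cU)$ witnessing linear or $p$-dependence, and add $p$-coordinates, in the spirit of Claim~\ref{Cl:2} and the Löwenheim--Skolem argument in Lemma~\ref{L:Keisler-main}. We may take $D$ to be the $P$-part of a countable good substructure of the form $\langle K c d\rangle$, with $d$ a countable tuple from $P(\cU)$. Moreover, we can arrange $P(K)D$ to be $P$ of a small elementary substructure of $P(\cU)$ containing $P(K)$, by enlarging $D$ to a countable elementary extension inside $P(\cU)$. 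By Theorem~\ref{T:Keisler}\eqref{part:good}, $\tp_{\cL_P}(c/K)$ is then determined by two pieces of data: $\tp_T(d/P(K))$, and the quantifier-free field type of $(c,d)$ over $K$, which includes the $\lambda$-functions, i.e.\ the $\SCF$-type $\tp_{\SCF_{p,e(\cU)}}(c,d/K)$.

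For stability, we count types. Take $|K|=\lambda$ with $\lambda^{\aleph_0}=\lambda$. By stability of $T$, there are at most $\lambda$ choices for $\tp_T(d/P(K))$ with $d$ countable. By stability of $\SCF_{p,e}$ (Fact~\ref{F:QE-SCF}), there are at most $\lambda$ quantifier-free $\cLr^\lambda$-types of $(c,d)$ over $K$. Hence $|S_1^{\cL_P}(K)|\le\lambda$, so $\SCF_{T,e}$ is $\lambda$-stable.

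For NIP, we plan to use the standard criterion that a theory is NIP if every formula $\varphi(x,y)$ with $|x|=1$ is NIP. Suppose $(b_i)_{i}$ is indiscernible and $a$ alternates on $\varphi(a,b_i)$. We would take a good substructure containing all the $b_i$ and $a$, as above. By Theorem~\ref{T:Keisler}\eqref{part:good}, the truth of $\varphi(a,b_i)$ is controlled by $\tp_T$ of the $P$-parts together with $\SCF$-quantifier-free data. Concretely, $\varphi(a,b)$ becomes equivalent modulo the theory to a Boolean combination of formulas $\exists \bar z\in P\,(\psi_{\SCF}(a,b,\bar z)\wedge\chi_T(\bar z))$, where $\bar z$ ranges over finite approximations. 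This reduction to "$P$-bounded" formulas is the step we expect to be the main obstacle: goodness involves infinitely many conditions, so a compactness argument is needed to get a finite $\bar z$. Once that reduction holds, NIP follows from the known preservation of NIP under adding a stably embedded predicate whose induced structure is NIP, where the ambient $\SCF$ is NIP (Chernikov--Simon, Jahnke--Simon style, using Theorem~\ref{T:Keisler}\eqref{part:SE-Keisler}). Alternatively, one can use the characterization of NIP by counting coheirs/types over sets of size $\lambda$ in a model, $|S(A)|<2^\lambda$, adapting the type-counting above with $\mathrm{ded}(\lambda)$ bounds.
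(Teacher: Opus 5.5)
\textbf{Stability.} Your argument works in outline, but it takes a different route from the paper's.

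The paper shrinks the base. For $a\in\cU$ it finds countable $K'\elres L'\elres\cU$ with $K'\subseteq K$, $a\in L'$, $L'\ind^*_{K'P(\cU)}K$ and $P(L')\ind^T_{P(K')}P(K)$, using local character of $\ind^{\ld}$, $\ind^p$ and $\ind^T$. It then shows that $\tp_{\cL_P}(L'/K)$ is determined by $\tp_{\cL_P}(L'/K')$ together with these independences. This uses uniqueness of the $\ind^*$-extension on the field side, uniqueness of nonforking extensions in $T$, Lemma~\ref{L:product} and Theorem~\ref{T:Keisler}\eqref{part:good}, and gives $|S_1(K)|\le 2^{\aleph_0}\kappa^{\aleph_0}$.

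You instead enlarge $K(c)$ by a countable $D\subseteq P(\cU)$ until it is good, and count directly using stability of both $T$ and $\SCF_{p,e+e_T}$. This avoids local character and stationarity, at the price of using stability of $\SCF$ as a black box. The paper's formulation via $\ind^*$ is what turns the NIP case into a short variant.

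Two points in your version need repair.
\begin{enumerate}
\item Countability of $D$ is the crux of your count. The closure argument as you phrase it ranges over all finite tuples of $K(c)D$, so it only gives $|D|\le\lambda$. What makes $D$ countable is that $K\elres\cU$ already satisfies $K\ind^{\ld,p}_{P(K)}P(\cU)$. By transitivity it then suffices to get $K(c^*)D\ind^{\ld}_{KD}KP(\cU)$ and $c^*\ind^p_{KD}P(\cU)$, where $c^*$ is $c$ together with the countably many $\lambda$-coordinates you add. The first condition involves only countably many finite sets of monomials in $c^*$; the second involves only finite $p$-dimensions of finite subtuples. An $\omega$-iteration, alternating with $\lambda$-closure in $\cU$ and in $P(\cU)$, finishes.
\item Drop the remark about making $P(K)D$ an elementary substructure of $P(\cU)$. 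Goodness does not require it: good isomorphisms only need their restriction to the $P$-part to be $T$-elementary. With countable $D$ it is also false in general: for $T=\ACF_p$, $P(K)$ uncountable and $D\not\subseteq P(K)$, the field $P(K)D$ is not algebraically closed.
\end{enumerate}

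\textbf{NIP.} Here there is a genuine gap: no proof is given.
\begin{itemize}
\item The alternation sketch is abandoned.
\item The reduction to bounded formulas is flagged as the main obstacle but not established.
\item The conclusion is delegated to an external preservation theorem.
\item The fallback of counting types over arbitrary sets with $\mathrm{ded}(\lambda)$-bounds does not work as stated. Goodness is only available over models, and $(\mathrm{ded}\,\lambda)^{\aleph_0}$ may equal $2^\lambda$, so such a bound does not rule out IP.
\end{itemize}

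Your route can be completed as follows. In a saturated model, pick a countable $d$ in $P$ such that the $\cLr^\lambda$-closure $\langle a d\rangle$ is good with $P$-part $\langle d\rangle$. The following are all bounded formulas in $(a,d)$:
\begin{itemize}
\item the quantifier-free $\cLr^\lambda$-type of $(a,d)$;
\item $\tp_T(d)$, relativized to $P$;
\item for each finite tuple of $\lambda$-terms in $(a,d)$ that is linearly (resp.\ $p$-) independent over $P$, the corresponding statement universal over $P$.
\end{itemize}
If $a'$ has the same bounded type as $a$, saturation gives $d'$ in $P$ realizing these conditions over $a'$. The induced map is then a good isomorphism, so $\tp(a)=\tp(a')$ by Theorem~\ref{T:Keisler}\eqref{part:good}.

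This argument uses universal $P$-quantifiers, so it yields general bounded formulas rather than your Boolean combinations of $\exists\bar z\in P(\ldots)$. General bounded formulas are all that Chernikov--Simon require. Their other hypothesis, that $P_{\mathrm{ind}(\emptyset)}$ is NIP, follows from Theorem~\ref{T:Keisler}\eqref{part:SE-Keisler}.

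The paper's route is shorter and reuses the stability machinery. It uses the criterion that NIP holds iff $\tp(N/M)$ has at most $2^{|M|+|T|}$ global coheirs whenever $|M|=|N|$. A global coheir of $\tp_{\cL_P}(L'/K')$ forces $L'\ind^*_{K'}\cU$ and $P(L')\ind^{\ld,p}_{P(K')}P(\cU)$. Exactly as in the stable case, such a coheir is then determined by the induced coheir of $\tp_{\cL}(P(L')/P(K'))$ in $T$, of which there are at most $2^\kappa$ since $T$ is NIP.
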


\begin{proof}
    Assume first that $T$ is stable. Let $K\elres \cU\models \SCF_{T}$ with $|K|=\kappa$. 

    \begin{claim*}
        For any $a\in\cU$ there are $K'\elres L'\elres\cU$ with $L'$ countable, $K'\subseteq K$ and $a\in L'$ such that
        $L'\ind^*_{K'P(\cU)}K$ and $P(L')\ind^T_{P(K')}P(K)$.
    \end{claim*}
We construct two chains $(K_i)_{i<\omega}$ and $(L_i)_{i<\omega}$ of countable elementary substructures of $\cU$ such that $a\in L_0$ and for every $i<\omega$ we have $K_i\subseteq K$, $L_i\supseteq K_i$, $L_{3i}\ind^{\ld}_{K_{3i+1}P(\cU)}K$, $L_{3i+1}\ind^{p}_{K_{3i+2}P(\cU)}K$, and finally $P(L_{3i+2})\ind^{T}_{P(K_{3i+3})}P(K)$. 

In the inductive step, we find $K_{n+1}$ by local character of $\ind^{\ld}$, $\ind^p$ and $\ind^T$, respectively, and $L_{n+1}$ is chosen to be any countable elementary substructure of $\cU$ containing $L_nK_{n+1}$.

Setting $K':=\bigcup_{i}K_i$ and $L':=\bigcup_i L_i$ we get $L'\ind^{\ld,p}_{K'P(\cU)}K$ and $P(L')\ind^T_{P(K')}P(K)$.
Note that the latter property implies $P(L_{3i+2})\ind^{\ld,p}_{P(K_{3i+3})}P(K)$, so we infer $L'\ind^*_{K'P(\cU)}K$ as well, which proves the claim.

From Lemma~\ref{L:product} we infer that $L'\ind^*_{K'}K$. Now, $\qftp_{\cL_P}(L'K/K)$ is the unique extension of $\qftp_{\cL_P}(L'/K')$ satisfying  $L'\ind^*_{K'}K$. Moreover, as $T$ is stable, $\tp_{\cL}(P(L')P(K)/P(K))$ is uniquely determined by $\tp_{\cL}(P(L')/P(K'))$ together with $P(L')\ind^T_{P(K')}P(K)$. As $L'K$ is a good substructure of $\cU$ by Lemma~\ref{L:product} and as good isomorphisms are elementary maps, $\tp_{\cL_P}(L'/K)$ is determined by $\tp_{\cL_P}(L'/K')$ together with $L'\ind^*_{K'}K$ and $P(L')\ind^T_{P(K')}P(K)$, showing that 
$$|S_1(K)\leq2^{\aleph_0}\lambda^{\aleph_0},$$
thus finishing the proof for $T$ stable.

To prove the second part of the theorem, assume that $T$ is NIP. Recall that a theory $S$ is NIP if and only if for any $M\elres N\models S$ with $|M|=|N|$, $\tp(N/M)$ has $\leq 2^{|S|+|M|}$ many global coheirs (see, e.g., \cite[Proposition~2.43]{simon}). Now let $K'\elres \cU\models \SCF_T$ and $K'\elres L'$ with $|K'|=|L'|=\kappa$. If $\tp_{\cL_P}(L'/\cU)$ is a coheir of $\tp_{\cL_P}(L'/K')$, clearly 
$L'\ind^*_{K'}\cU$ and $P(L')\ind^{\ld,p}_{P(K')}P(\cU)$. The latter holds as $\tp_{\cL}(P(L')/P(\cU))$ is a coheir of $\tp_{\cL}(P(L')/P(K'))$. Since $T$ is NIP, there are at most $2^{\kappa}$ many global coheirs of $\tp_{\cL}(P(L')/P(K'))$. Arguing as in the proof of the stable case, we conclude that any coheir of $\tp_{\cL_P}(L'/K')$ is determined by the respective coheir of $\tp_{\cL}(P(L')/P(K'))$, proving that $\SCF_T$ is NIP.
\end{proof}

\begin{remark}
    The corresponding result for $\ACF_T$, in case $T$ is stable or NIP, is proven in \cite[Theorem~5.16 \& Theorem~5.34]{DeKaNe24}, resplendently with respect to the theory $T$. Similarly, our proofs of preservation of stability and NIP work resplendently, as may be checked in a straight forward manner.
\end{remark}

We suspect that various other tameness notions are preserved as well. In particular, it seems plausible that the methods from \cite{DeKaNe24} employed to show preservation of simplicity and NSOP$_1$ for $\ACF_T$ may be transferred to the case of $\SCF_T$. We did not investigate this further and leave it as a question.

\begin{question}
    Does the preservation result in Theorem~\ref{T:preserve-tame} hold for other tameness notions, in particular for simplicity, NTP$_2$, NIP$_n$ and/or NSOP$_n$ (where $n\in\N$)?
\end{question}

As $\SCF_{p,e}$ is complete and stable for every $e$, $\cK_\Def$ has AP and EP by Fact~\ref{fact:stable-nfcp}. We infer the following simple axiomatizations for $\left(\SCF_{p,e}\right)_{\BP}$, for finite and infinite $e$.

\begin{corollary}
    \begin{enumerate}
        \item For $e\in\N$, the theory of proper elementary pairs of models of $\SCF_{p,e}$ is given by $\SCF_{\left(\SCF_{p,e}\right),0}$. It is complete and equal to $\left(\SCF_{p,e}\right)_{\BP}$.
        \item In infinite degree of imperfection, one has $\left(\SCF_{p,\infty}\right)_{\BP}=\SCF_{\left(\SCF_{p,\infty}\right),\infty}$.
    \end{enumerate}
\end{corollary}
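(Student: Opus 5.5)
The plan is to identify $T_{\BP}$ for $T=\SCF_{p,e}$ with the theory $\SCF_{T,e_P}$ for the appropriate value of $e_P$, using Theorem~\ref{T:Keisler}.

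\emph{Step 1: elementary pairs are separable.} Let $(M,P(M))$ be a proper elementary pair of models of $\SCF_{p,e}$, i.e.\ $P(M)\elres M$. By Fact~\ref{F:QE-SCF}, $P(M)$ is an $\cLr^{\lambda}$-substructure of $M$, so $M/P(M)$ is separable by Fact~\ref{F:characterize-sep-ext}. Since $P(M)\neq M$ and $P(M)$ is separably closed, every element of $M\setminus P(M)$ is transcendental over $P(M)$. Indeed, it cannot be separable algebraic, and it cannot be inseparable algebraic either, because $M/P(M)$ is separable. Hence $[M:P(M)]=\infty$, and $(M,P(M))\models\SCF_T$.

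\emph{Step 2: the relative imperfection degree.} A $p$-basis of $P(M)$ stays $p$-independent in $M$ by Fact~\ref{F:characterize-sep-ext}. If $e<\infty$, it has $e$ elements, so it is already a $p$-basis of $M$. Then $e(M/P(M))=0$ and the pair is a model of $\SCF_{T,0}$.

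Conversely, let $(M,P(M))\models\SCF_{T,0}$ with $e<\infty$. Any $p$-basis of $P(M)$ is $p$-independent in $M$ and spans $M$ over $M^p$, so $e(M)=e$. Thus $M\models\SCF_{p,e}$. To get $P(M)\elres M$, observe that the $\lambda$-functions of $M$ restrict to those of $P(M)$ by separability, so $P(M)$ is an $\cLr^{\lambda}$-substructure. Quantifier elimination (Fact~\ref{F:QE-SCF}) then gives elementarity. Therefore $\SCF_{T,0}$ axiomatizes exactly the proper elementary pairs, and it is complete by Theorem~\ref{T:Keisler}(\ref{part:complete}).

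Since $T$ is stable, $\cK_{\Def}$ has AP and EP by Fact~\ref{fact:stable-nfcp}. Hence beautiful pairs exist and are elementary pairs by Fact~\ref{fact:charct-beaut-pairs}(1). They are proper, because EP lets one realize a non-algebraic formula outside $P$. So $T_{\BP}$ is a completion containing the theory of proper elementary pairs, and since that theory is complete, the two are equal. This proves (1).

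\emph{Step 3: the case $e=\infty$.} The same argument shows that every proper elementary pair models $\SCF_{T,e'}$ for some $e'$. In this case $e(M)=\infty$ is automatic, because $P(M)$ already has infinite imperfection degree. So it remains to show that a beautiful pair has $e(M/P(M))=\infty$. Given finitely many elements, I will use EP/AP to adjoin, via a $\BP$-extension, an element that is $p$-independent over $M^pP(M)$. Concretely, take an elementary extension of $M$ containing an element $t$ that is $p$-independent over $M$, and let $\cB$ be the pair generated by $M$ and $t$ over $P(M)$. For this to work, the extension must be a $\BP$-embedding in the stable sense, i.e.\ $\tp(M t/P(M))$ must be definable. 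In a stable theory every type over a model is definable, so any $M'\succ M$ with $P$ kept fixed yields a pair in $\cK_{\Def}$. The remaining point is to check that $M\subseteq_{\BP}(M',P(M))$, which requires $\tp(M/P(M))$ to have the correct definition; I will arrange this by choosing $t$ independent (nonforking) from $M$ over $P(M)$.

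The (BP) property then embeds such extensions into $M$ over any finite set, producing an infinite $p$-independent set over $P(M)$. Therefore $T_{\BP}\supseteq\SCF_{T,\infty}$, and since $\SCF_{T,\infty}$ is complete, the two theories are equal.

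I expect this last step to be the main obstacle. The delicate part is verifying that the extension adding a new $p$-independent element is a $\BP$-extension, which requires the independence choice described above.
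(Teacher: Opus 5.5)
Your overall route is the paper's: show that a beautiful pair satisfies $\SCF_{T,0}$ (resp.\ $\SCF_{T,\infty}$), then conclude by completeness from Theorem~\ref{T:Keisler}. Steps~1--2 are fine. They include the check that the models of $\SCF_{T,0}$ are exactly the proper elementary pairs, which is more detail than the paper gives. Step~3, however, does not yet prove what is needed, and the properness claim is not justified as written.

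\emph{Step~3.} You place the difficulty in checking that the extension is a $\BP$-extension, but that part is automatic. If $P(\cA)=P(\cB)$, then $\tp(A/P(\cB))=\tp(A/P(\cA))|P(\cB)$ holds trivially, so no nonforking choice of $t$ is needed. Two things you do need are missing.

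First, (BP) only applies to $|\cB|<\lambda$. So you cannot extend $\cM$ itself; you must work over a small $\BP$-substructure.

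Second, and this is the real point, you never explain why the image $h(t)$ under the $\BP$-embedding $h$ is still $p$-independent over $P(M)$ inside $M$. This property involves elements of $M$ and $P(M)$ outside $h(\cB)$, and it is exactly where the $\BP$ condition must be used.

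\emph{Properness.} EP as stated does not control whether the witness lies in $P(\cB)$. So ``EP realizes a non-algebraic formula outside $P$'' does not by itself give properness.

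\emph{A repair.} Take a small $P_0\elres P(M)$. The trivial pair $(P_0,P_0)$ is a $\BP$-substructure of $\cM$. It is also a $\BP$-substructure of $\cB=(B,P_0)$ for any small $B$ with $P_0\elres B$, and $\cB\in\cK_{\Def}$ by stability.

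Choose $B$ containing $\bar t=(t_1,\dots,t_k)$ that is $p$-independent over $P_0$; this is possible since $e=\infty$. Let $h\colon\cB\to\cM$ be given by (BP) over $(P_0,P_0)$. Then $\tp(h(B)/P(M))$ is the definable extension of $\tp(B/P_0)$.

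Suppose $(h(\bar t),\bar b)$ were $p$-dependent for some $p$-independent tuple $\bar b$ from $P(M)$. That is, $\varphi(h(\bar t),\bar b)$ holds for the $\cLr$-formula $\varphi(\bar x,\bar y)$ saying ``$\bar y$ is $p$-independent and $(\bar x,\bar y)$ is $p$-dependent''. Then $d\varphi(\bar b)$ holds. Since $d\varphi$ is over $P_0\elres P(M)$, there is $\bar b'$ in $P_0$ with $d\varphi(\bar b')$, hence $\varphi(\bar t,\bar b')$. This contradicts the choice of $\bar t$.

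So $e(M/P(M))\geq k$ for every $k$, giving $e(M/P(M))=\infty$. The same argument gives properness in the finite case: take $B\succ P_0$ proper, and use that the definable extension of a non-algebraic type is non-algebraic.
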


\begin{proof} Let $\cK=(K,P(K))$ be a beautiful pair of models of $\SCF_{p,e}$. Clearly, $\cK\models \SCF_{\left(\SCF_{p,e}\right),0}$ for $e$ finite and $\cK\models \SCF_{\left(\SCF_{p,\infty}\right),\infty}$ for $e=\infty$. The result now follows from Theorem~\ref{T:Keisler}.
\end{proof}

It is well known that $\SCF_{p,e}$ is nfcp for all $p,e$. We obtain a simple proof of this.

\begin{corollary}
For any $e$, $\SCF_{p,e}$ is nfcp. Equivalently, $\cK_{\Def}$ has beauty transfer in $\SCF_{p,e}$. Moreover, in $\SCF_{p,\infty}$, for any $e\in\mathbb{N}$ the class $\cF^{e}$ of global definable types whose realizations have imperfection degree $\leq e$ over $\cU$ is natural. The corresponding natural class $\cK_e$ has AP, EP and beauty transfer, and one has $\left(\SCF_{p,\infty}\right)_{\BP}(\cK_e)=\SCF_{\left(\SCF_{p,\infty}\right),e}$.
\end{corollary}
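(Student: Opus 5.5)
We have to prove three things: that $\SCF_{p,e}$ is nfcp (equivalently, by Fact~\ref{fact:stable-nfcp}, that $\cK_{\Def}$ has beauty transfer), that $\cF^e$ is natural in $\SCF_{p,\infty}$, and that $\cK_e$ has AP, EP and beauty transfer with the stated theory. The plan is to reduce everything to Theorem~\ref{T:Keisler} together with the abstract criterion in Fact~\ref{fact:charct-beaut-pairs}(3)(a).

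\textbf{Beauty transfer for $\cK_{\Def}$.} Since $\SCF_{p,e}$ is stable, $\cK_{\Def}$ has AP and EP. Beautiful pairs are therefore elementary pairs. A $\lambda$-beautiful pair $(K,P(K))$ is non-trivial, so $[K:P(K)]=\infty$. We compute $e(K/P(K))$:
\begin{itemize}
\item it is $0$ if $e<\infty$, since $K/P(K)$ is separable and $e(K)=e(P(K))=e$;
\item it is $\infty$ if $e=\infty$, using EP and (BP) to realize arbitrarily many elements $p$-independent over $P(K)$.
\end{itemize}
This is the previous corollary, so $T_{\BP}=\SCF_{T,e_0}$ with $T=\SCF_{p,e}$ and $e_0$ as above.

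To verify (3)(a), take a $\lambda$-saturated $\cM=(M,P(M))\models\SCF_{T,e_0}$ with $\lambda\ge|T|^+$; we show $\cM$ is $\lambda$-beautiful. Let $\cA\subseteq_{\BP}\cM$ and $\cA\subseteq_{\BP}\cB$ with $|\cB|<\lambda$. First base-change $\cB$ along $P(\cA)\elres P(M)$ to the amalgam $\cB_{P(M)}$ of Section~\ref{S:beauty}. The aim is then to embed $B$ over $A\cup P(M)$ into $M$ realizing $\tp_{\cL}(B/P(\cB))|P(M)$.

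\textbf{Key step: the $\BP$ condition becomes goodness.} In a stable theory, $\tp(B/P(M))$ being the definable (heir/nonforking) extension means $B\ind_{P(\cB)}P(M)$ in the sense of $\SCF_{p,e}$-forking. By QE, Fact~\ref{F:QE-SCF}, this forking is $\ind^{\ld,p}$ of the generated $\lambda$-closed fields together with separable closure. So $\BP$-substructures of models of $\SCF_{T,e_0}$ are exactly the good substructures whose $P$-part is an elementary substructure. $\BP$-embeddings between them are good isomorphisms, hence elementary by Theorem~\ref{T:Keisler}(\ref{part:good}).

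We then run the back-and-forth of Lemma~\ref{L:Keisler-main} with $\lambda$-saturation in place of $\aleph_1$-saturation. The extension of the $P$-part is already taken care of by the base change. The hypothesis $e\le e'$ needed there holds: either both relative degrees are $\infty$, or $e_0=0$. In the latter case $\cB$ has relative degree $0$ over $P(\cB)$ as well, since it lies in a stable-embedded extension of models with the same finite $e$. Nfcp then follows from Fact~\ref{fact:stable-nfcp}.

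\textbf{The class $\cF^e$ and its beautiful pairs.} For $\cF^e$ in $\SCF_{p,\infty}$, naturality is routine:
\begin{itemize}
\item the finite-character condition holds because imperfection degree over $\cU$ is determined by finite subtuples;
\item automorphism invariance is clear;
\item closure under pushforward holds since $\cU(f(a))\subseteq\cU(a)$, with separability of $\cU(a)^{\mathrm{sep}}$ over $\cU(f(a))$ handled via $\lambda$-functions.
\end{itemize}
AP follows by amalgamating over $P(\cD)$ using $\ind^{\ld,p}$-independent amalgams, with relative degree adding up but bounded by the max once we pass to a common $P$. EP follows by realizing a formula by an element separably algebraic over $P$ plus a realized part.

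Beauty transfer is the same argument as above, now for $\SCF_{T,e}$ with $T=\SCF_{p,\infty}$. I expect the main obstacle to be the hypothesis $e\le e'$ in Lemma~\ref{L:Keisler-main} when the target has relative degree exactly $e$. Here the embedding must send a $p$-basis of $B$ over $P(\cB)$ into $M$ $p$-independently over $P(M)$ while staying inside the degree-$e$ bound. I would handle it by first mapping a relative $p$-basis of $\cA$ and extending to a full relative $p$-basis of $\cM$, which has size $e$. The corresponding basis in $\cB$ has size $\le e$, and matching it uses $\lambda$-saturation.
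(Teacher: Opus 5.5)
Your treatment of nfcp and of beauty transfer follows the paper's route. The paper's proof just says that beauty transfer for $\cK_{\Def}$ and for $\cK_e$ follows from Lemma~\ref{L:Keisler-main}. Your identification of $\BP$-substructures with good substructures whose $P$-part is elementary (via the $\ind^{\ld,p}$ description of nonforking over models), and your check of the hypothesis $e\le e'$, are what make that lemma applicable. One small slip: you cannot base-change $\cB$ along $P(\cA)\elres P(M)$, since the amalgam $\cB_{P(M)}$ needs $P(\cB)\elres P(M)$. You must first embed $P(\cB)$ elementarily into $P(M)$ over $P(\cA)$ by saturation, as in the first paragraph of the proof of Lemma~\ref{L:Keisler-main}.

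The step that fails is your direct proof of AP for $\cK_e$. In an $\ind^{\ld,p}$-independent amalgam over a common $P(\cD)$, the $\BP$ condition forces $e(BP(\cD)/P(\cD))=e(B/P(\cB))$, and likewise for $C$ and $A$. Independence over $AP(\cD)$ then makes the relative imperfection degree equal to $e(B/P(\cB))+e(C/P(\cC))-e(A/P(\cA))$, not the maximum. For instance, take $e=1$, $\cA=N_{\triv}$, $\cB=(\langle Nb\rangle,N)$ and $\cC=(\langle Nc\rangle,N)$, with $b$ and $c$ each $p$-independent over $N$ and of type in $\cF^1$. The independent amalgam has relative degree $2$, so it is not in $\cK_1$. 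A correct amalgam must make $c$ $p$-dependent on $b$ over $P(\cD)$. Your EP sketch also gives no reason why an arbitrary consistent $\cL(A)$-formula has a realization of the form you describe.

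Both properties should instead be obtained as in Theorem~\ref{thm:beauty-SCVF}. The argument of Lemma~\ref{L:Keisler-main} shows directly that every $\lambda$-saturated model of $\SCF_{(\SCF_{p,\infty}),e}$ satisfies (BP) for $\cK_e$. This works because Claim~2 there only requires the new elements to be $p$-independent over $f(K_0)P(K')$, so different embeddings may reuse the same part of the target's relative $p$-basis. Hence $\cK_e$-beautiful pairs exist, which gives AP by Fact~\ref{fact:charct-beaut-pairs}(1). These pairs are elementary pairs, which gives EP. By Theorem~\ref{T:Keisler}, $T_\BP(\cK_e)=\SCF_{(\SCF_{p,\infty}),e}$, and only then does Fact~\ref{fact:charct-beaut-pairs}(3) yield beauty transfer. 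The order matters, since that fact presupposes AP.
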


\begin{proof}
Beauty transfer for $\cK_{\Def}$, and also for $\cK_e$ in infinite degree of imperfection, follows from Lemma~\ref{L:Keisler-main}. The rest is clear.
\end{proof}

\begin{remark}
    Set $e':=e+e_T$, where $e_T\in\N\cup\{\infty\}$ is the imperfection degree of the models of $T$. Then $\SCF_{T,e}\models\SCF_{p,e'}$. \qed
\end{remark}

\section{Domination and definable types in separably closed valued fields}\label{S:Dom-SCVF}
In this subsection, we will generalize some results known for $\ACVF$ to separably closed valued fields, namely results about domination (in particular from \cite{CuHiYe26}) and the characterization of definable types (due to Cubides and Delon \cite{CD}). 


Recall that a valued field is called \emph{maximally complete} if it does not admit any proper immediate extension. We call it \emph{separably maximally complete}  if it does not admit any proper immediate separable extension. 


\begin{fact}[\mbox{\cite[Lemma~4.15]{HKR2016}}]\label{fact:char-sep-max}
For $K\models \SCVF_p$, the following are equivalent:
\begin{enumerate}
    \item $K$ is separably maximally complete.
    \item $K^{alg}$ is maximally complete.\qedhere
\end{enumerate}
\end{fact}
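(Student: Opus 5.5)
The plan is to prove the two implications separately. The only tools needed are Fact~\ref{F:dense}, which says that $K^{\alg}/K$ is immediate and $K$ is $v$-dense in $K^{\alg}$, and Kaplansky's theory of pseudo-Cauchy sequences. Throughout, since $K$ is separably closed, $K^{\alg}=K^{p^{-\infty}}$ is purely inseparable over $K$.

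\emph{(2)$\Rightarrow$(1).} Suppose $L/K$ is a proper immediate separable extension, and work inside $L^{\alg}$.
\begin{enumerate}
\item Form the compositum $LK^{\alg}=\bigcup_n LK^{p^{-n}}$. By separability, $L\ind^{\ld}_K K^{p^{-\infty}}$, so $L\cap K^{\alg}=K$. Hence any $a\in L\setminus K$ lies outside $K^{\alg}$, and $LK^{\alg}\supsetneq K^{\alg}$.
\item To show immediacy, note that $LK^{p^{-n}}=(L^{p^n}K)^{p^{-n}}$. Since $K\subseteq L^{p^n}K\subseteq L$ and $L/K$ is immediate, $L^{p^n}K$ has value group $\VG(K)$ and residue field $\RF(K)$.
\item Applying the inverse Frobenius, $LK^{p^{-n}}$ has value group $\frac1{p^n}\VG(K)$ and residue field $\RF(K)^{1/p^n}$. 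These are the value group and residue field of $K^{p^{-n}}$, which in turn equal those of $K$ by Fact~\ref{F:dense}.
\item Taking the union over $n$, $LK^{\alg}$ is a proper immediate extension of $K^{\alg}$. This contradicts maximal completeness of $K^{\alg}$.
\end{enumerate}

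\emph{(1)$\Rightarrow$(2).} Suppose $K^{\alg}$ is not maximally complete.
\begin{enumerate}
\item There is a pseudo-Cauchy sequence $(a_\nu)$ in $K^{\alg}$ with no limit in $K^{\alg}$. Let $\gamma_\nu=v(a_{\nu+1}-a_\nu)$, which is eventually increasing.
\item By density of $K$ in $K^{\alg}$ (Fact~\ref{F:dense}), choose $b_\nu\in K$ with $v(b_\nu-a_\nu)>\gamma_{\nu+1}$. Then $(b_\nu)$ is pseudo-Cauchy with the same eventual values and the same set of limits in any extension. So $(b_\nu)$ is a pseudo-Cauchy sequence in $K$ with no limit in $K^{\alg}$.
\item Since $K^{\alg}$ is algebraically closed, Kaplansky's theorem shows that $(b_\nu)$ is of transcendental type over $K^{\alg}$. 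Consequently there is an immediate extension $K^{\alg}(x)$ of $K^{\alg}$, where $x$ is a pseudo-limit transcendental over $K^{\alg}$.
\item Restrict to $K(x)\subseteq K^{\alg}(x)$. Its value group and residue field sit between those of $K$ and of $K^{\alg}(x)$, and these coincide because $K^{\alg}/K$ is immediate (Fact~\ref{F:dense}). So $K(x)/K$ is immediate.
\item $K(x)/K$ is purely transcendental, hence separable, and it is proper. This contradicts separable maximal completeness of $K$.
\end{enumerate}

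I expect the main point needing care to be the immediacy computation in (2)$\Rightarrow$(1), in particular the passage through $L^{p^n}K$ and Frobenius, which is where separability of $L/K$ is actually used. I also need to confirm that the approximation step in (1)$\Rightarrow$(2) preserves the absence of limits in $K^{\alg}$; this follows from the ultrametric inequality, since $v(b_\nu-a_\nu)$ eventually exceeds the gaps $\gamma_\nu$.
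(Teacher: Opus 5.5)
The paper gives no proof of this statement; it is quoted from \cite[Lemma~4.15]{HKR2016}. Your argument is correct and self-contained, using only Fact~\ref{F:dense} and standard valuation theory, so nothing needs repair. There is one misstatement in your commentary, and a couple of steps can be shortened.

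In (2)$\Rightarrow$(1), separability of $L/K$ is used only in your step (1), through $L\cap K^{\alg}=K$, to make $LK^{\alg}/K^{\alg}$ proper. The Frobenius computation in steps (2)--(3) works for an arbitrary immediate extension $L/K$, so your closing remark puts the use of the hypothesis in the wrong place. That computation can also be shortened. Immediacy of $K^{\alg}/K$ means $\VG(K)$ is divisible and $\RF(K)$ is algebraically closed. Since $LK^{\alg}/L$ is algebraic, its value group lies in the divisible hull of $\VG(L)=\VG(K)$, and its residue field is algebraic over $\RF(L)=\RF(K)$; so neither grows.

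In (1)$\Rightarrow$(2), your approximation step (2) is redundant. Step (4) only uses that $K^{\alg}(x)/K^{\alg}$ and $K^{\alg}/K$ are immediate, so you could run step (3) directly with $(a_\nu)$. In fact you can avoid pseudo-Cauchy sequences entirely. Take $M\supsetneq K^{\alg}$ immediate and $x\in M\setminus K^{\alg}$. Then $x$ is transcendental over $K$, because $K^{\alg}$ is algebraically closed. So $K(x)\subseteq M$ is a proper, purely transcendental (hence separable) extension of $K$. Its value group and residue field are squeezed between those of $K$ and of $M$, which coincide because $M/K^{\alg}$ and $K^{\alg}/K$ are both immediate.
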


It is a classical fact that all extensions of a maximally complete valued field are $vs$-defectless. The following is an analogue in the separable category.

\begin{proposition}\label{prop:SepMaxima->separated}
Assume that $K$ is a separably maximally complete valued field and $L/K$ is a separable extension of valued fields. Then $L/K$ is $vs$-defectless.
\end{proposition}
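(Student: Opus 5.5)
The plan is the classical ``best approximation'' argument for maximally complete fields, run by induction on the dimension of $V$. The only new work is to check that the pseudo-limits which appear can be found inside $K$, using only separable maximal completeness.

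\emph{Reduction to an approximation statement.} Let $V\subseteq L$ be a finite-dimensional $K$-subspace. Choose a flag $0=V_0\subset V_1\subset\cdots\subset V_n=V$ with $\dim_K V_i=i$. Suppose $V'=V_{n-1}$ already has a $K$-valuation basis $e_1,\ldots,e_{n-1}$, and write $V=V'+Kb$. It then suffices to show that the set $\{v(b-w)\mid w\in V'\}$ has a maximum, attained at some $w_0$. Indeed, for such $w_0$ the tuple $(e_1,\ldots,e_{n-1},b-w_0)$ is $K$-valuation independent. If some combination $c(b-w_0)+w$ with $c\neq0$ had valuation strictly larger than $\min(v(c(b-w_0)),v(w))$, then the two valuations would be equal, and $b-(w_0-c^{-1}w)$ would be a strictly better approximation than $b-w_0$.

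\emph{Assume no maximum exists and extract coordinates.} Suppose the maximum is not attained. Then there is a well-ordered sequence $(w_\rho)$ in $V'$ with $v(b-w_\rho)$ strictly increasing and cofinal in $\{v(b-w)\mid w\in V'\}$. This sequence is pseudo-Cauchy, has $b$ as a pseudo-limit, and has no pseudo-limit in $V'$. Write $w_\rho=\sum_i c_{i,\rho}e_i$. Valuation independence gives
\[v(w_\rho-w_\sigma)=\min_i v\bigl((c_{i,\rho}-c_{i,\sigma})e_i\bigr).\]
From this, each coordinate sequence $(c_{i,\rho})_\rho$ in $K$ is either eventually constant or, after passing to a cofinal subsequence, pseudo-Cauchy. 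Moreover, the convergence rate of each coordinate is controlled by $v(b-w_\rho)$. If every coordinate sequence had a pseudo-limit $a_i\in K$ at the correct rate, then $\sum_i a_ie_i\in V'$ would be a pseudo-limit of $(w_\rho)$. This would give a better approximation of $b$ than all the $w_\rho$, which is a contradiction.

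\emph{Pseudo-limits in $K$: the main obstacle.} So we must show that each coordinate pseudo-Cauchy sequence $(c_\rho)$ has a pseudo-limit in $K$. By Kaplansky's theory, if $(c_\rho)$ has no pseudo-limit in $K$, there are two cases.
\begin{enumerate}
\item If $(c_\rho)$ is of transcendental type, it yields an immediate extension $K(t)$ with $t$ transcendental. This extension is separable, contradicting separable maximal completeness.
\item If $(c_\rho)$ is of algebraic type, take a minimal polynomial $f$. If $f$ can be chosen separable, we get a proper immediate separable algebraic extension of $K$, again a contradiction.
\end{enumerate}
The delicate case is when every such minimal-degree $f$ is inseparable, for instance $f=X^p-a$, so that the sequence ``converges'' only to an element of $K^{p^{-\infty}}\setminus K$. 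This is where separability of $L/K$ has to be used. Using the identity above, I would show that a pseudo-limit of $(w_\rho)$ in $LK^{p^{-\infty}}$ can be built from pseudo-limits of the coordinates. Concretely, work inside $LK^{p^{-\infty}}$ and take pseudo-limits $a_i\in K^{p^{-\infty}}$ where necessary; then $\sum_i a_ie_i$ and $b$ are both pseudo-limits of $(w_\rho)$, which should force $b-\sum_i a_ie_i$ to have valuation larger than every $v(b-w_\rho)$.

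Since $L\ind^{\ld}_K K^{p^{-\infty}}$, the $K$-basis $e_1,\ldots,e_{n-1},b$ of $V$ remains $K^{p^{-\infty}}$-linearly independent. I would try to play this off against the approximation to show that each $a_i$ can in fact be taken in $K$. The first thing to try is to replace $a_i$ by suitable elements of $K$ via density/approximation and recheck the valuation estimate. If that fails, I would instead reduce to the case where $K$ is a model of $\SCVF_p$ and use Fact~\ref{fact:char-sep-max}: there $K^{\alg}$ is maximally complete, so all pseudo-limits exist in $K^{\alg}$, and the problem becomes descending valuation independence from $LK^{\alg}$ to $L$ using linear disjointness with $K^{p^{-\infty}}$. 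I expect this inseparable case to be the crux of the proof.
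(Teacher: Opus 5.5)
Your skeleton is the same as the paper's Delon-style induction on (a$_n$) and (b$_n$): induction on $\dim_K V$, reduction to the existence of a maximum of $\{v(b-w)\mid w\in V'\}$, and a Kaplansky case split. Your transcendental and separable-algebraic cases are fine.

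\textbf{The gap.} It is the case you flag yourself: a pseudo-Cauchy sequence in $K$ of algebraic type all of whose minimal polynomials are inseparable. Neither of your proposed repairs can succeed, because the statement is false precisely in this case.

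\emph{Counterexample.} Let $M$ be an algebraically closed maximally complete field of characteristic $p$, e.g.\ $\mathbb{F}_p^{\alg}((t^{\mathbb{Q}}))$. Let $B$ be a transcendence basis of $M$, fix $b_0\in B$, and let $K\subseteq M$ be the separable closure, inside $M$, of $\mathbb{F}_p(b_0,(B\setminus\{b_0\})^{p^{-\infty}})$.
\begin{itemize}
\item[(i)] Then $K\models\SCVF_p$, $e(K)=1$ and $K^{\alg}=M$. So $K$ is separably maximally complete by Fact~\ref{fact:char-sep-max}, and dense in $M$ by Fact~\ref{F:dense}.
\item[(ii)] Put $y:=b_0^{1/p}\in M\setminus K$ and choose $\delta>\VG(K)$ in $\VG(K)\oplus\mathbb{Z}\delta$. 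Give $M(X)$ the Gauss valuation $w\bigl(\sum_i c_i(X-y)^i\bigr)=\min_i\bigl(v(c_i)+i\delta\bigr)$.
\item[(iii)] $L:=K(X)$ is purely transcendental, hence separable, over $K$. But for $d\in K$ one has $w(X-d)=\min(\delta,v(y-d))=v(y-d)$, which has no maximum since $K$ is dense in $M$.
\item[(iv)] So $X$ has no best approximation from $K$, and $K+KX$ has no $K$-valuation basis. Indeed, a valuation basis of a $2$-dimensional space containing $1$ can be exchanged to one of the form $(1,u)$, which would yield a best approximation. Hence $L/K$ is not $vs$-defectless.
\end{itemize}

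\emph{Why your repairs fail.} In your notation ($n=1$, $e_1=1$, $b=X$), the coordinate sequence is a Cauchy sequence in $K$ converging to $y$. It is of algebraic type, and its minimal polynomials are exactly the scalar multiples of $Z^p-b_0$.
\begin{itemize}
\item[(i)] Since $w(X-y)=\delta$ exceeds all of $\VG(K)$, density of $K$ in $K^{\alg}$ never lets you replace $y$ by an element of $K$.
\item[(ii)] We do have $L\ind^{\ld}_K K^{\alg}$, and $LK^{\alg}/K^{\alg}$ is $vs$-defectless (the span of $1,X$ has valuation basis $(1,X-y)$). Yet nothing descends, because linear disjointness from $K^{p^{-\infty}}$ is not valuative disjointness.
\end{itemize}

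\textbf{The paper's proof has the same hole.} It obtains (b$_1$) from the inference ``$K(c/c')/K$ is a proper extension which is not immediate, hence $\{v(c/c'-d)\mid d\in K\}$ has a maximum''. Its step from (a$_{n+1}$) and (b$_n$) to (b$_{n+1}$) passes to a saturated $K'\succcurlyeq K$ and applies the same inference to $K(r_i)\subseteq K'$. Non-immediacy only rules out the transcendental-type case.

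In the example, $w(X^p-b_0)=w\bigl((X-y)^p\bigr)=p\delta\notin\VG(K)$. Any $K(g)$ with $g\in K(X)\setminus K$ has value group of finite index in that of $K(X)$, so it is not immediate over $K$. Thus $L/K$ has no proper immediate subextension at all, and still (b$_1$) fails.

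So the Proposition as stated needs an extra hypothesis controlling inseparable pseudo-limits, and your proposal cannot be completed. Note also that the example is a model of $\SCVF_p$ with maximally complete algebraic closure. That is a field of the same kind as the $K_1$ to which the Proposition is applied in the proof of Proposition~\ref{lem:domination}.
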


\begin{proof}
The proof of the main theorem in \cite{delon-separe} may be adapted to our situation. Consider the following properties, where $n\in\mathbb{N}$:
\begin{enumerate}
\item[(a$_n$)] Every $n$-dimensional $K$-subvector space of $L$ has a $K$-valuation basis.
\item [(b$_n$)] If $V$ is an $n$-dimensional $K$-subvector space of $L$ and $c\in L$, then $\{v(c-w) \mid w\in V\}$ admits a maximum.
\end{enumerate}

We will prove both properties by induction on $n$.

First, (a$_1$) holds for every extension by the definition of valuation basis.

We claim that (b$_1$) holds whenever the extension $L/K$ does not contain any proper immediate subextension $L'/K$. Indeed, if $V=c'\cdot K$ for $c'\in L\setminus\{0\}$ and $c\in L\setminus V$, we have
\[
\max\{v(c-w) \mid w\in V\}
=\max \{v(c-dc') \mid d\in K\}
=v(c') + \max \{v(c/c'-d) \mid d\in K\}.
\]
If $c\in V$, equivalently $c/c'\in K$, the result is clear. Else, the proper extension $L'=K(c/c')$ of $K$ is not immediate by assumption, so $\max \{v(c/c'-d) \mid d\in K\}$ exists, which proves the claim. 

Note that if $L/K$ is separable and $K$ is separably maximally complete, then $L/K$ satisfies the assumption of the claim.

Second, we claim that for every extension $L/K$ and every $n$, (a$_n$) together with (b$_n$) imply (a$_{n+1}$). Indeed, if $V$ is of dimension $n+1$, let $V'$ be an $n$-dimensional subspace. By (a$_n$), there is a $K$-valuation basis $(w_1,\cdots,w_n)$ of $V'$. Complete this basis to a $K$-basis of $V$ via $u$. 

By (b$_n$), there is $w'\in V'$ such that 
\[
v(u-w')=\max \{v(u-w) \mid w\in V'\}.
\]
Setting $w_{n+1}:=u-w'$, it is clear that $(w_1,\cdots,w_{n+1})$ is a $K$-valuation basis of $V$.

Finally, assume that $L/K$ is separable and $K$ is separably maximally complete. We claim that under these assumptions (a$_{n+1}$) together with (b$_{n}$) imply (b$_{n+1}$). Indeed, assume for contradiction that $V$ is an ($n+1$)-dimensional $K$-subvector space of $L$ and $c\in L\setminus V$ such that $\{v(c-w) \mid w\in V\}$ admits no maximum. 

Let $(u_1,...,u_{n+1})$ be a $K$-valuation basis of $V$, which exists by (a$_{n+1}$). 
Let $(L',K',v')$ be a $|K|^+$-saturated elementary extension of $(L,K,v)$ (working in the language of pairs of valued fields).

By elementarity, $(u_1,\cdots,u_{n+1})$ is a $K'$-valuation basis of 
\[
V':=\text{span}_{K'}(u_1,\cdots,u_{n+1})\subseteq L',
\]
which is also of dimension $n+1$, as a $K'$-vector space.

By $|K|^+$-saturation of $K'$ and  our assumption that  $\{v(c-w) \mid w\in V\}$ admits no maximum,
there are $r_1,...,r_{n+1}\in K'$ such that 
\[
v'(c-\sum_{1\leq i\leq n+1}r_iu_i) > v(c-w) 
\tag{$*$}
\] 
for any $w\in V$. 

Note that, $K'/K$ is separable, as it is regular, being an elementary extension of fields. Thus $K(r_i)/K$ is not a proper immediate extension. It follows that 
$\{v'(r_i-d) \mid d\in K\}$ admits a maximum, say $v'(r_i-d_i)$ for $d_i\in K$.

Hence $v(c- \sum_{1\leq i\leq n+1} d_iu_i)$ is maximal in $\{v(c-w) \mid w\in V\}$ by the non-Archimedean triangle inequality, ($*$), and the $K'$-valuation independence of $(u_1,\cdots,u_{n+1})$. Indeed, let $w=\sum_{1\leq i\leq n+1}e_iu_i\in V$. Then we compute
\begin{align*}
v(c-w)&=v'\left(\sum_{1\leq i\leq n+1}r_iu_i-w\right)
= v'\left(\sum_{1\leq i\leq n+1}(r_i-e_i)u_i\right)
=\min_{i}\{v'(r_i-e_i)v(u_i)\}\\
&\leq 
\min_{i}\{v'(r_i-d_i)v(u_i)\}
= v'\left(\sum_{1\leq i\leq n+1}(r_i-d_i)u_i\right)\\
&=v'\left(\sum_{1\leq i\leq n+1}r_iu_i-\sum_{1\leq i\leq n+1}d_iu_i\right)
=v\left(c-\sum_{1\leq i\leq n+1}d_iu_i\right).
\end{align*}

This maximality contradicts the assumption.
\end{proof}

The following is a valuative analogue of linear disjointness. 

\begin{definition}[Valuative disjointness]\label{def:valuative-disjoint}
Let $K\subseteq K', L$ be valued subfields of a common valued field. Suppose the extension $L/K$ is $vs$-defectless. We say \emph{$L$ is valuatively disjoint from $K'$ over $K$}, denoted by $L\ind_K^{\vd} K'$, if every $K$-valuation basis of a finite dimensional $K$-vector subspace in $L$ is also $K'$-valuation independent. 
\end{definition}



\begin{proposition}[Algebraic domination]\label{prop:our12.11} Let $\cU\models \SCVF_{p,e}$ be a sufficiently saturated and strongly homogeneous model, where we work in the language $\cLgk^{\lambda}$. Let $K\subseteq F,L\subseteq \cU$ be small $\cLgk^{\lambda}$-substructures. Suppose that 
\begin{itemize}
    \item $K$ is $\VF$-generated and $\VF(K)$, $\VF(F)$, $\VF(L)$ are fields, 
    \item $\VF(L)/\VF(K)$ is $vs$-defectless, 
    \item $\VF(L)\ind^p_{\VF(K)}\VF(F)$ (inside $\VF(\cU)$),
    \item $v(\VF(L))\cap v(\VF(F))=v(\VF(K))$ and 
    \item $\res(\OO(L))\ind_{\res(\OO(K))}^{\ld} \res(\OO(F))$.
\end{itemize}
Let $\sigma$ and $\tau$ be two automorphims of $\cU$ over $K\VG(L)\RF(L)$ with $\sigma(L)=L'$ and $\tau(F)=F'$. Then there is an automorphism $\rho$ of $\cU$ mapping $N=\langle L\cup F\rangle$ to $N'=\langle L'\cup F'\rangle$ and such that $\rho_{|L}=\sigma_{|L}$ and $\rho_{|F}=\tau_{|F}$. Furthermore, $\mathrm{Frac}(\VF(N))=\VF(L)\VF(F)$, $\VG(N)= \VG(L)+\VG(F)$, $\RF(N)$ is contained in $\RF(F)\RF(L)$, and we have $\VF(L)\ind_{\VF(K)}^{\vd} \VF(F)$.   
\end{proposition}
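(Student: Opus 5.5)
The plan is to reduce to the analogous algebraic domination statement for $\ACVF$ (the one used in \cite{CuHiYe26}, going back to Haskell--Hrushovski--Macpherson), and then use the $p$-independence hypothesis to recover the $\lambda$-functions, exactly as in the proof of Proposition~\ref{prop:QE-SCVF}.

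\textbf{Step 1: valuative disjointness and the value-group/residue-field computation.} First show $\VF(L)\ind^{\vd}_{\VF(K)}\VF(F)$. Take a $\VF(K)$-valuation basis $(l_1,\dots,l_n)$ of a finite-dimensional subspace of $\VF(L)$; it exists since $\VF(L)/\VF(K)$ is $vs$-defectless. Then compute $v(\sum f_il_i)$ for $f_i\in\VF(F)$. The standard argument runs by induction on $n$. Suppose $v(\sum f_il_i)>\min_i v(f_il_i)$. Restrict to the indices $i$ attaining the minimum. If two of these, $i\neq j$, have $v(l_i)-v(l_j)\notin v(\VF(K))$, this contradicts $v(\VF(L))\cap v(\VF(F))=v(\VF(K))$. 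So we may normalize all the $l_i$ to the same value by $K$-scalars. Dividing through and taking residues then gives a nontrivial $\res(\OO(F))$-linear relation among residues $\res(l_i/l_1)$, which are $\res(\OO(K))$-linearly independent by valuation independence. This contradicts $\ind^{\ld}$.

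From valuative disjointness one reads off directly that $\VG$ of the compositum $\VF(L)\VF(F)$ is $\VG(L)+\VG(F)$, and that its residue field is generated by $\RF(L)\RF(F)$. One also gets that $\VF(L)$ and $\VF(F)$ are linearly disjoint over $\VF(K)$. So the valued field $\VF(L)\VF(F)$, i.e.\ its valuation, is determined by the valued fields $\VF(L)$ and $\VF(F)$ together with the value group and residue field data. This is the usual ACVF domination.

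\textbf{Step 2: constructing $\rho$ at the $\cLgk$-level.} Here $\sigma$ and $\tau$ fix $\VG(L)$ and $\RF(L)$, so in particular they fix $K$. The hypotheses are therefore preserved for the pair $(L',F')$ over $K$. Define $\rho_0$ on $\VF(L)\VF(F)$ by $\sigma|_L\otimes\tau|_F$, which is well defined by linear disjointness. By Step 1 it preserves the valuation, because valuations of sums $\sum f_il_i$ are computed as $\min_i v(f_i)+v(l_i)$, and $\sigma$, $\tau$ act compatibly on these values. The key input is that $\sigma$ fixes $\VG(L)$ pointwise. Likewise $\rho_0$ is compatible with residues, since $\sigma$ fixes $\RF(L)$. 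Hence $\rho_0$ is an $\cLgk$-isomorphism between the structures generated by $L\cup F$ and $L'\cup F'$.

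\textbf{Step 3: passing to $\cLgk^\lambda$.} By the $p$-independence hypothesis, $\VF(L)\VF(F)$ is a separable subfield of $\VF(\cU)$ in the relevant sense. Indeed, $p$-bases of $\VF(L)$ and of $\VF(F)$ over $\VF(K)$ combine to a $p$-independent set, and both are relatively $\lambda$-closed because $L$ and $F$ are $\cLgk^\lambda$-substructures. It follows that $\VF(\cU)/\VF(L)\VF(F)$ is separable, and similarly on the primed side, since $\sigma$ and $\tau$ preserve $\lambda$. By Fact~\ref{F:characterize-sep-ext}(4) the compositum is then an $\cLgk^\lambda$-substructure, so $\mathrm{Frac}(\VF(N))=\VF(L)\VF(F)$ and $\rho_0$ respects the $\lambda_{n,\nu}$. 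Quantifier elimination (Proposition~\ref{prop:QE-SCVF}) and strong homogeneity then extend $\rho_0$ to an automorphism $\rho$ of $\cU$.

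\textbf{Main obstacle.} I expect the main difficulty in Step 3: verifying carefully that separability over the compositum really follows from $\VF(L)\ind^p_{\VF(K)}\VF(F)$ together with separability of $\cU$ over $L$, $F$ and $K$, and that this is transported by $\rho_0$.
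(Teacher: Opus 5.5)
Your overall route is the paper's: the $\ACVF$ argument of \cite[Proposition~6.3.1]{CuHiYe26}, plus the observation that $p$-independence makes the compositum $\lambda$-closed. The gap is exactly where you suspected, in the clause ``and similarly on the primed side, since $\sigma$ and $\tau$ preserve $\lambda$'', and in the Step~2 claim that all hypotheses are preserved for $(L',F')$.

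The valuative hypotheses do transfer, because both $\sigma$ and $\tau$ fix $\VG(L)$ and $\RF(L)$ pointwise. But $\VF(L')\ind^p_{\VF(K)}\VF(F')$ is a joint condition on $L'$ and $F'$. It is not carried over by two \emph{different} automorphisms. From $\sigma$ and $\tau$ you only learn that $\VF(\cU)$ is separable over $\VF(L')$ and over $\VF(F')$ individually. Without joint $p$-independence, $\VF(L')\VF(F')$ need not be $\lambda$-closed, and $\rho_0$ need not respect the $\lambda_{n,\nu}$.

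A better argument cannot fix this, because the statement fails as written. Here is a counterexample.
\begin{itemize}
\item Take $e=\infty$ and $K\elres\cU$ small. Choose $a,b\in\OO(\cU)$ jointly $p$-independent over $\VF(K)$, with $\res(a),\res(b)$ algebraically independent over $\RF(K)$. Let $L,F$ be the substructures generated by $K\cup\{a\}$ and $K\cup\{b\}$.
\item Then $\VF(L)=\VF(K)(a)$ and $\VF(F)=\VF(K)(b)$ carry the Gauss valuation. So $\VF(L)/\VF(K)$ is $vs$-defectless and $\VG(L)=\VG(F)=\VG(K)$, and all hypotheses hold.
\item Pick $c\in\OO(\cU)$ with $\res(c)$ transcendental over $\RF(K)(\res(a))$, and put $b':=a+c^p$.
\item By quantifier elimination, $b'$ and $b$ have the same type over $K\cup\{\res(a)\}$. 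So there is $\tau$ fixing $K\VG(L)\RF(L)$ pointwise with $\tau(b)=b'$. Let $\sigma=\id$.
\item Any $\rho$ as in the conclusion would send $b-a\notin\VF(\cU)^p$ to $b'-a=c^p\in\VF(\cU)^p$, which is impossible.
\end{itemize}

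The paper's one-line proof has the same lacuna: it checks separability of $\VF(\cU)$ only over $\VF(L)\VF(F)$. The correct formulation adds the hypothesis $\VF(L')\ind^p_{\VF(K)}\VF(F')$. This is what is actually available where the result is used, since Corollary~\ref{Cor:our12.11-types} and Proposition~\ref{lem:domination}(2) include the $p$-independence condition in the dominating type.

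Under that extra hypothesis your Step~3 works on both sides. Combining a $p$-basis of $\VF(K)$ with $p$-bases of $\VF(L')$ and $\VF(F')$ over it gives a $p$-independent subset of $\VF(\cU)$. Hence both composita are $\lambda$-closed by Fact~\ref{F:characterize-sep-ext}. A field isomorphism between $\lambda$-closed subfields automatically preserves the $\lambda_{n,\nu}$.
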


\begin{proof}
This is proved as the corresponding result \cite[Proposition~6.3.1]{CuHiYe26} for $\ACVF$, using quantifier elimination in $\SCVF_{p,e}$ (Proposition~\ref{prop:QE-SCVF}) and noting that as $\VF(\cU)$ is a separable extension of $\VF(K)$, $\VF(L)$ and $\VF(F)$, the assumption $\VF(L)\ind^p_{\VF(K)}\VF(F)$ implies that $\VF(\cU)/\VF(L)\VF(F)$ is separable as well, thus $(\mathrm{Frac}(\VF(N)),\VG(N),\RF(N))$ is an $\cLgk^{\lambda}$-substructure of $\cU$.
\end{proof}

As in the case of $\ACVF$ (see \cite[Proposition~6.3.3]{CuHiYe26}, and \cite{CD}, for types from the $\VF$-sort alone), we may infer an algebraic chacterization of definable types as well as a domination statement from this.

\begin{proposition}\label{lem:domination} Let $\cU\models \SCVF_{p,e}$. Let $K\subseteq L\subseteq \cU$ be $\cLgk^{\lambda}$-substructures with $K\preccurlyeq \cU$ and $\VF(L)$ a field. Then 
\begin{enumerate}[label=(\arabic*)]
    \item $\tp(L/K)$ is definable if and only if $\VF(L)/\VF(K)$ is $vs$-defectless and $\tp(\VG(L)/\VG(K))$ is definable;
    \item if $tp(L/K)$ is definable and $K\subseteq K'\subseteq \cU$, with $K'$ an $\cLgk^{\lambda}$-substructure of $\cU$ such that $\VF(K')$ is a field, then $\tp(L/K)\mid K'$ is implied by
    
\begin{equation}\label{eq:domination}\tag{$\ast$}
\tp(L/K)\cup\tp(\RF(L)/\RF(K))\mid \RF(K')\cup\tp(\VG(L)/\VG(K))\mid \VG(K') \cup\{\VF(L)\ind^p_{\VF(K)}\VF(K')\}. 
\end{equation}
Moreover, if $\widetilde{L}\models \tp(L/K)\mid K'$, then $(\VF(\widetilde{L})\VF(K'),\VG(\widetilde{L})+\VG(K'),\RF(\widetilde{L})\RF(K')$ is an $\cLgk^{\lambda}$-substructure of $\cU$.
\end{enumerate}
\end{proposition}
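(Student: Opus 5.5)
We follow the strategy of \cite[Proposition~6.3.3]{CuHiYe26} for $\ACVF$, replacing the $\ACVF$ ingredients by Proposition~\ref{prop:QE-SCVF}, Proposition~\ref{prop:SepMaxima->separated} and Proposition~\ref{prop:our12.11}. The guiding principle is that, over a model $K$, the $p$-independence condition $\VF(L)\ind^p_{\VF(K)}\VF(K')$ is the only new ingredient compared to $\ACVF$. It is needed so that the compositum is again an $\cLgk^{\lambda}$-substructure, i.e.\ so that $\VF(\cU)/\VF(L)\VF(K')$ is separable.

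We prove (2) first, assuming $\VF(L)/\VF(K)$ is $vs$-defectless and $\tp(\VG(L)/\VG(K))$ is definable. Since $K$ is a model, $\RF(K)\models\ACF_p$ and $\VG(K)\models\DOAG$. Hence $\tp(\RF(L)/\RF(K))$ is definable by stability, and its canonical extension $\mid\RF(K')$ makes sense. Take $\widetilde L$ realizing the partial type $(\ast)$, and a second realization $\widetilde L'$. We aim to apply Proposition~\ref{prop:our12.11} with base $K$, $F:=K'$, and $L$ replaced by $\widetilde L$ (or $\widetilde L'$). The hypotheses to check are as follows.
\begin{enumerate}
\item $vs$-defectlessness transfers, since it is part of $\tp(L/K)$.
\item $v(\VF(\widetilde L))\cap v(\VF(K'))=v(\VF(K))$. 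This follows from the definable (heir-like) extension of the value group type over a model of $\DOAG$: a value in $\VG(K')\setminus\VG(K)$ cannot coincide with one from $\widetilde L$. The type $\mid\VG(K')$ is the unique $\VG(K)$-definable extension, and definable types over models are heirs.
\item The residue fields are linearly disjoint, because the generic (non-forking) extension in $\ACF$ yields $\ind^{\ld}$.
\item $p$-independence holds by assumption.
\end{enumerate}
Proposition~\ref{prop:our12.11}, applied to automorphisms fixing $K\VG(\widetilde L)\RF(\widetilde L)$, then shows that $\tp(\widetilde L K'/K')$ is determined by $(\ast)$. To compare $\widetilde L$ with $\widetilde L'$, we first move $\widetilde L'$ to $\widetilde L$ over $K$. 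The value group and residue data over $K'$ then agree by the given types together with stable embeddedness and orthogonality (Proposition~\ref{prop:QE-SCVF}(2)). So there is an automorphism over $K$ sending $\VG(\widetilde L')\RF(\widetilde L')$ appropriately, and the domination statement glues the pieces. The ``moreover'' clause is the final assertion of Proposition~\ref{prop:our12.11}, together with separability of $\VF(\cU)$ over the compositum.

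For (1), $\Leftarrow$: by (2), the set of formulas $\varphi(x,c)$ with $c\in\cU$ satisfied by $\tp(L/K)\mid\cU$ is determined by $(\ast)$ with $K'=\cU$. To see that this is given by a definable scheme, we use quantifier elimination and reduce as in \cite{CD}. A quantifier-free formula $\varphi(x,c)$ in the $\VF$-variables is a Boolean combination of conditions on $v(f(x,c))$, residues, and the $\lambda$-functions. Writing $f(x,c)=\sum_i c_i m_i(x)$ in a $K$-valuation basis of the relevant finite-dimensional $K$-space spanned by monomials in $L$, valuation disjointness gives $v(f)=\min_i v(c_i)+v(m_i)$. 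This is definable in terms of $c$ using the definability of the value group type; residues are handled similarly using definability over $\RF$. The $\lambda_{n,\nu}$-terms are handled by first expanding with respect to a $p$-basis of $L$ over $K$: by $p$-independence over $\cU$, the $\lambda$-coordinates of elements of $\VF(L)\VF(\cU)$ are computed coordinatewise, reducing to the ring case.

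For (1), $\Rightarrow$: definability of $\tp(\VG(L)/\VG(K))$ is immediate. For $vs$-defectlessness, we pass to a model $K$ of $\SCVF$ and use a separably maximally complete elementary extension $K^*$, which exists by Fact~\ref{fact:char-sep-max}. Realize $\tp(L/K)\mid K^*$; the extension is separable since $K^*$ is a model, so Proposition~\ref{prop:SepMaxima->separated} gives a $K^*$-valuation basis. Definability then pulls it back to $K$, as in the $\ACVF$ argument of \cite{CD}. We expect the main obstacle to be precisely the handling of the $\lambda$-functions in the definability scheme for (1), $\Leftarrow$, together with the verification that the definable extension satisfies $\ind^p$. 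The first approach to try is to fix a $p$-basis of $\VF(L)$ over $\VF(K)$ and argue that $p$-independence over $\cU$ is a definable (indeed generic) condition.
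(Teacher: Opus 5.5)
Your plan has the same architecture as the paper's proof. For (1)$\Rightarrow$ you pass to a separably maximally complete elementary extension, apply Proposition~\ref{prop:SepMaxima->separated}, and pull a valuation basis back along the defining scheme. For (1)$\Leftarrow$ you use the $\ACVF$ valuation-basis argument. For (2) you use Proposition~\ref{prop:our12.11} as in the $\ACVF$ case.

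Some steps, however, are not justified as written. First, Fact~\ref{fact:char-sep-max} only characterizes separable maximal completeness; it does not produce $K^*$. The paper constructs it:
\begin{itemize}
\item take a maximal immediate extension $\widetilde K\models\ACVF$ of $K$ and a transcendence basis $\mathcal B$ of $\widetilde K/K$;
\item set $K_1:=K(b^{p^{-n}} : b\in\mathcal B,\ n\geq 0)^{\mathrm{sep}}$;
\item $K_1/K$ is separable with the same imperfection degree, so $K\elres K_1$ by Proposition~\ref{prop:QE-SCVF};
\item $K_1^{\alg}=\widetilde K$, so $K_1$ is separably maximally complete by the Fact.
\end{itemize}
In the same step you need $\dim_{\VF(K^*)}\VF(K^*)V=\dim_{\VF(K)}V$, i.e.\ linear disjointness of the realization from $\VF(K^*)$ over $\VF(K)$. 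Without it there is no invertible $n\times n$ matrix to pull back. This holds because definable extensions over a model are heirs. Your separability argument, as a subextension of $\VF(\cU)/\VF(K^*)$, is fine.

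The step you leave open is a genuine missing piece of (2). You only show that $(\ast)$ has at most one completion over $K'$. You never check that $\tp(L/K)\mid K'$ satisfies $(\ast)$, in particular $\VF(L)\ind^p_{\VF(K)}\VF(K')$, and the ``moreover'' clause needs the same fact. The paper gets it from the stable reduct: $p_0=\tp_{\cLr^\lambda}(\VF(L)/\VF(K))$ is definable in $\SCF_{p,e}$. The field part of the definable extension is $p_0\mid\VF(K')$, which satisfies $\ind^{\ld,p}_{\VF(K)}$, so the compositum is an $\cLr^\lambda$-substructure. Your heir idea also works: ``$\bar x$ is $p$-independent over $\bar z$'' is a single $\cLr$-formula, so a $p$-dependence over $\VF(K')$ would pull back to one over $\VF(K)$.

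Finally, in (1)$\Leftarrow$ you invoke $\tp(L/K)\mid\cU$ before definability is known. What you actually need is that $(\ast)$ with $K'=\cU$ is consistent. This follows from $K\elres\cU$: each part of $(\ast)$ is given by a $K$-definable scheme whose instances over $K$ lie in $\tp(L/K)$. Once $(\ast)$ is consistent and has a unique completion $q$, compactness finishes the argument. For each $\chi(x,y)$, both $\{c:\chi(x,c)\in q\}$ and its complement are small unions of $K$-definable sets, hence $K$-definable. So the explicit bookkeeping of $\lambda$-terms, which you identify as the main obstacle, can be bypassed entirely.
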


\begin{remark}
By pure stable embeddedness of the residue field and the value group (Proposition~\ref{prop:QE-SCVF}), it amounts to the same to consider the types $\tp(\RF(L)/\RF(K))$ and $\tp(\VG(L)/\VG(K))$ in the languages of rings and ordered groups, respectively, or in the full ambient valued field language. As all types in $\ACF_p$ are definable, there is no condition in the statement about the definability of $\tp(\RF(L)/\RF(K))$.
\end{remark}

\begin{proof}
Let us first prove (1). If $\tp(L/K)$ is definable, so is $\tp(\VG(L)/\VG(K))$ (in the language of ordered groups), by pure stable embeddedness of the value group. 

\begin{claim}\label{claim:sep-max-elem}
$K$ admits an immediate elementary extension $K_1$ which is separably maximally complete. 
\end{claim}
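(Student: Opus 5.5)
The claim asks for an immediate elementary extension of $K$ (a model of $\SCVF_{p,e}$) that is separably maximally complete. I plan to obtain it as the relative separable closure of $K$ inside a maximal immediate extension of $K^{\alg}$. By Fact~\ref{fact:char-sep-max}, separable maximal completeness of $K_1\models\SCVF_p$ is equivalent to $K_1^{\alg}$ being maximally complete, so I will aim to make $K_1^{\alg}$ maximally complete.

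\emph{Construction.} Let $M$ be a maximal immediate extension of $K^{\alg}$ (Krull/Kaplansky). Then $M$ is algebraically closed and maximally complete, with $\VG(M)=\VG(K)$ and $\RF(M)=\RF(K)$; the last equalities hold because $K^{\alg}/K$ is immediate by Fact~\ref{F:dense}. The extension $M/K$ need not be separable, so I will first choose a maximal subextension $K\subseteq K_1\subseteq M$ such that $K_1/K$ is separable. Such a $K_1$ exists by Zorn, since separability is of finite character by Fact~\ref{F:characterize-sep-ext}(3). Equivalently, and more usefully, I will take $K_1$ maximal among subfields of $M$ containing $K$ that are separable over $K$ and relatively separably closed in $M$.

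\emph{Key steps.} First, $K_1$ is separably closed: any separable algebraic element over $K_1$ inside $M$ preserves separability over $K$, so it lies in $K_1$ by maximality. Second, $K_1/K$ is immediate, being sandwiched in $M$. Third, I need $e(K_1)=e$. Since $K_1/K$ is separable, a $p$-basis of $K$ stays $p$-independent in $K_1$, which gives $e(K_1)\ge e$. For the reverse inequality I would show that a $p$-basis $B$ of $K$ remains a $p$-basis of $K_1$. If some $t\in K_1$ were $p$-independent over $K_1^p(B)$, I would argue from maximality that $t^{1/p}\in M$ can be adjoined, or rather that $K_1(t^{1/p})$ is still separable over $K$, contradicting relative maximality. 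The more honest route is to choose $K_1$ maximal subject to ``$K_1/K$ separable and $B$ is a $p$-basis of $K_1$'', i.e.\ $K_1=K_1^p(B)$. In that setting the condition is preserved under unions, and I would then check that $K_1^{\alg}=M$. Once $K_1\models\SCVF_{p,e}$, completeness of $\SCVF_{p,e}$ together with QE (Proposition~\ref{prop:QE-SCVF}) shows $K\preccurlyeq K_1$: the $\cLgk^\lambda$-embedding $K\subseteq K_1$ is a substructure inclusion because the extension is separable (Fact~\ref{F:characterize-sep-ext}(4)) and immediate on the sorts.

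\emph{Main obstacle.} The hardest step is maximal completeness of $K_1^{\alg}$, i.e.\ showing $K_1^{\alg}=M$ (or at least that $K_1^{\alg}$ is maximally complete). My plan is by contradiction. Suppose $a\in M\setminus K_1^{\alg}$, so $a$ is transcendental over $K_1$. By density of $K_1^p\cdot$-translates (Fact~\ref{F:dense}: $K_1^{p^n}$ is dense in $K_1^{\alg}$, hence in $M$, as $M/K_1^{\alg}$ is immediate), I would replace $a$ by an element $a'\in M$, transcendental over $K_1$ and realizing a pseudo-Cauchy limit, such that $K_1(a')/K$ stays separable and $B$ stays a $p$-basis. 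Concretely I would take $a'=\sum_{\nu} c_\nu^p\mathbf{b}^\nu$-style approximations, using the $p$-expansion along $B$ with coefficients in $M$. Then taking the relative separable closure of $K_1(a')$ contradicts maximality of $K_1$. If this direct approach gets messy, the fallback is a chain construction: alternately take a maximal immediate extension of the algebraic closure and its ``separable part'' of the right imperfection degree, iterating $\omega$ times, and verify maximal completeness of the union's algebraic closure via pseudo-Cauchy sequences of length of countable cofinality. The reduction to elementarity via QE is routine.
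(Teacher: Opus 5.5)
\textbf{There is a genuine gap, at exactly the step you flag as the main obstacle.}

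Most of your setup is sound: the maximal immediate extension $M$ of $K^{\alg}$, a Zorn-maximal $K_1\subseteq M$ subject to ``$K_1/K$ separable and $B$ a $p$-basis of $K_1$'', and QE for elementarity. Separable closedness, immediateness and $e(K_1)=e$ also go through as you say.

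The gap is the proof that $K_1^{\alg}=M$. Your density/pseudo-Cauchy plan cannot work in this formulation. For \emph{any} $a'\in M$ transcendental over $K_1$ one has $K_1(a')^p(B)=K_1^p(B)(a'^p)=K_1(a'^p)$, which does not contain $a'$. So $B\cup\{a'\}$ is $p$-independent in $K_1(a')$, and $B$ is never a $p$-basis of $K_1(a')$ or of its relative separable closure in $M$. This holds however $a'$ is chosen, so approximating $a$ by $a'$ gains nothing. Your fallback $\omega$-chain is also unsupported. Pseudo-Cauchy sequences need not have countable cofinality, and an increasing union of maximally complete fields need not be maximally complete.

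\textbf{The missing idea} is to adjoin the whole tower of $p$-power roots. If $a\in M$ is transcendental over $K_1$, set $L:=K_1(a^{p^{-n}}\mid n\geq 0)\subseteq M$.
\begin{itemize}
\item $L$ is a union of purely transcendental extensions of $K_1$, so it is separable over $K_1$ and hence over $K$.
\item $L^p(B)=K_1^p(B)(a^{p^{-n}}\mid n\geq 0)=L$, so $B$ is still a $p$-basis of $L$.
\end{itemize}
This contradicts the maximality of $K_1$. No approximation is needed: $a$ itself works.

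This is precisely the paper's construction, done directly rather than via Zorn. Take a transcendence basis $\mathcal{B}$ of a maximal immediate extension $\widetilde{K}$ over $K$, set $K_0:=K(b^{p^{-n}}\mid b\in\mathcal{B},n\geq 0)$, and let $K_1$ be the separable closure of $K_0$ inside $\widetilde{K}$. Then $K_1^{\alg}=\widetilde{K}$ holds automatically, and the $p^{-\infty}$-roots guarantee that a $p$-basis of $K$ stays a $p$-basis of $K_1$.

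Your \emph{first} formulation would also have worked: take $K_1$ maximal among subfields of $M$ separable over $K$.
\begin{itemize}
\item There a transcendental $a$ can simply be adjoined, since $K_1(a)/K$ is separable, so $K_1^{\alg}=M$ is trivial.
\item The argument for $e(K_1)\leq e$ that you sketched and then abandoned is correct. If $t\in K_1\setminus K_1^p(B)$, then $B\cup\{t\}$ is $p$-independent in $K_1$. Hence $B$ stays $p$-independent in $K_1(t^{1/p})$, whose field of $p$-th powers is $K_1^p(t)$. So $K_1(t^{1/p})/K$ is separable, contradicting maximality.
\end{itemize}
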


To prove the claim, let $\widetilde{K}$ be a maximal immediate extension of $K$. Then $\widetilde{K}$ is maximally complete, and a model of $\ACVF$. If $\mathcal{B}$ is a transcendence basis of $\widetilde{K}$ over $K$ and $K_0:=K(b^{p^{-n}}:b\in\mathcal{B},n\geq0)$, it is straight forward to check that $K_1:=K_0^{sep}$ is an elementary extension of $K$. As $K_1^{alg}=\widetilde{K}$, the valued field $K_1$ is separably maximally complete by Fact~\ref{fact:char-sep-max}. This proves the claim.

Clearly, $\tp(L/K)\mid K_1\vdash \VF(L)\ind^{ld,p}_{\VF(K)}\VF(K_1)$, and so the compositum $\VF(L)\VF(K_1)$ is a separable etxension of $\VF(K_1)$. Now let $V$ be a finite-dimensional $\VF(K)$-subvector space of $\VF(L)$. Consider the base change $V_1=\VF(K_1)V$. Then $\dim_{\VF(K_1)} V_1=\dim_{\VF(K)} V$, and by Proposition~\ref{prop:SepMaxima->separated}, $V_1$ admits a $\VF(K_1)$-valuation basis. As $\tp(L/K)$ is definable, it follows that $V$ admits a $\VF(K)$-valuation basis, too. This shows that $L/K$ is $vs$-defectless.

The converse direction in (1) follows as in the proof of \cite[Theorem 5.9]{cubi-ye}, where the assumption that $L$ is a model is not used. Note that in the stable reduct $\SCF_{p,e}$ (which admits quantifier elimination in $\cLr^\lambda$ by Fact~\ref{F:QE-SCF}), $p_0:=\tp_{\cLr^\lambda}(\VF(L)/\VF(K))$ is definable, and if $\VF(L)\models p_0\mid\VF(K')$ for $K\elres K'\elres\cU$, then $\VF(L)\ind^{ld,p}_{\VF(K)}\VF(K')$, so $\VF(L)\VF(K')$ is an $\cLr^\lambda$- structure. 

Part (2) follows as in the proof of \cite[Proposition~6.3.3]{CuHiYe26}, using Proposition~\ref{prop:our12.11}. The moreover part follows from the end of the previous paragraph.
\end{proof}

We now state a variant we will need later.

\begin{corollary}
    \label{Cor:our12.11-types} Let $\cU\models \SCVF_{p,e}$, and et $K\subseteq F,L\subseteq \cU$ be $\cLgk^{\lambda}$-substructures. Suppose that 
\begin{itemize}
    \item $K$ is $\VF$-generated and $\VF(K)$, $\VF(F)$, $\VF(L)$ are fields, 
    \item $\VF(L)/\VF(K)$ is $vs$-defectless,   
      \item $\VF(L)\ind^p_{\VF(K)}\VF(F)$ (inside $\VF(\cU)$),
    \item $v(\VF(L))\cap v(\VF(F))=v(\VF(K))$ and 
    \item $\res(\OO(L))\ind_{\res(\OO(K))}^{\ld} \res(\OO(F))$.
\end{itemize}

Then $\tp(L/K)\cup\tp(\RF(L)/\RF(F))\cup\tp(\VG(L)/\VG(F))\cup\{\VF(L)\ind^p_{\VF(K)}\VF(F)\}\vdash\tp(L/F)$.
\end{corollary}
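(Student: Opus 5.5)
The plan is to derive the corollary directly from Proposition~\ref{prop:our12.11}, working in a sufficiently saturated and strongly homogeneous monster $\cU$. Suppose $L'\models\tp(L/K)\cup\tp(\RF(L)/\RF(F))\cup\tp(\VG(L)/\VG(F))\cup\{\VF(L')\ind^p_{\VF(K)}\VF(F)\}$. More precisely, $L'$ is the image of $L$ under a $K$-elementary map $\sigma_0$, and the types $\tp(\RF(L)/\RF(F))$ and $\tp(\VG(L)/\VG(F))$ are realized by $\sigma_0(\RF(L))$ and $\sigma_0(\VG(L))$, in the same enumeration. The goal is an automorphism of $\cU$ fixing $F$ pointwise and sending $L$ to $L'$ compatibly with $\sigma_0$.

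First I would produce an automorphism $\tau$ of $\cU$ fixing $\RF(L')\VG(L')K$ such that $\tau$ moves $F$ to a copy $F'$ positioned relative to $L'$ exactly as $F$ is positioned relative to $L$. Concretely, the residue-field and value-group data satisfy $\tp(\RF(L')\VG(L')/F)=\tp(\RF(L)\VG(L)/F)$. This uses orthogonality $\RF\perp\VG$ and pure stable embeddedness (Proposition~\ref{prop:QE-SCVF}(2)) to combine the two separate types into a joint type over $F$. Hence there is $\alpha\in\Aut(\cU/F)$ with $\alpha(\RF(L))=\RF(L')$ and $\alpha(\VG(L))=\VG(L')$ pointwise as tuples. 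Replacing $L$ by $\alpha(L)$ preserves all five hypotheses, since they are invariant under automorphisms over $F\supseteq K$. So I may assume that $L$ and $L'$ share the same residue and value-group tuples, that $\sigma_0$ fixes $K\VG(L)\RF(L)$, and that $\VF(\alpha(L))\ind^p_{\VF(K)}\VF(F)$.

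Next I apply Proposition~\ref{prop:our12.11}. Let $\sigma\in\Aut(\cU/K\VG(L)\RF(L))$ extend $\sigma_0$, which exists by strong homogeneity, and take $\tau=\id$. The proposition yields $\rho$ with $\rho|_L=\sigma|_L$ and $\rho|_F=\id$, mapping $\langle LF\rangle$ to $\langle L'F\rangle$. Its hypotheses are exactly those of the corollary, for the pair $(L,F)$. For the conclusion to be legitimate, $\langle L'F\rangle$ must again be a structure of the same shape. This is exactly where the extra assumption $\VF(L')\ind^p_{\VF(K)}\VF(F)$ is needed. Without it, $L'F$ might fail to have separable compositum, and the $\lambda$-functions could then produce new elements.

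The main obstacle is precisely this point. In Proposition~\ref{prop:our12.11} the conditions are imposed on both sides via $\sigma,\tau$ being automorphisms, but the $p$-independence of $L'$ from $F$ is not automatically transported by $\sigma$, which fixes only $K\VG(L)\RF(L)$. I would therefore inspect the proof of Proposition~\ref{prop:our12.11} (equivalently \cite[Proposition~6.3.1]{CuHiYe26}) to check where it is used. The $\cLgk$-isomorphism $\langle LF\rangle\cong\langle L'F\rangle$ comes from valuative disjointness and the $\ACVF$ argument. It uses only the residue and value-group conditions together with $vs$-defectlessness, and for $L'$ these follow from the type conditions. The $\lambda$-structure is the only place where $p$-independence enters. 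With $\VF(L')\ind^p_{\VF(K)}\VF(F)$ assumed, $\VF(\cU)/\VF(L')\VF(F)$ is separable, so the field isomorphism is an $\cLgk^\lambda$-isomorphism. Quantifier elimination (Proposition~\ref{prop:QE-SCVF}) then makes it elementary, which gives $\tp(L'/F)=\tp(L/F)$.
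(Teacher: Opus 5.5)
Your proposal is correct and follows the paper's route, which simply deduces the corollary from Proposition~\ref{prop:our12.11}: after an automorphism over $F$ makes the realization agree with $L$ on the $\RF$- and $\VG$-tuples, you are in the setting of that proposition with $\tau=\id$. You are right not to use the proposition as a pure black box, since $\sigma$ need not transport $\VF(L)\ind^p_{\VF(K)}\VF(F)$ to $L'$; the extra clause in the corollary is what makes $\VF(\cU)/\VF(L')\VF(F)$ separable, so the $\cLgk$-isomorphism from the $\ACVF$ argument is an $\cLgk^{\lambda}$-isomorphism and hence elementary by quantifier elimination.
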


\begin{proof}
    This is a consequence of Proposition~\ref{prop:our12.11}.
\end{proof}

\section{Separable vs-defectless pairs of separably closed valued fields}\label{S:sep-vs-defectless}
In this section we study separable $vs$-defectless pairs of separably closed valued fields.

\begin{definition}\label{def:pairs-SCVF}
\begin{itemize}
    \item We set $\cL:=\cLgk^{\lambda}$, and $\cL_P:=\cL\cup\{P\}$ denotes the corresponding language of pairs. (Formally, we add a new unary predicate to each sort of $\cLgk^{\lambda}$, but by abuse of notation we will denote this new predicate by $P$ in each sort.)
    \item Fix a prime $p$, a pair $E=(e,e_P)$ of elements from $\mathbb{N}\cup\{\infty\}$, and let $T_{\RF}$ and $T_{\VG}$ be completions of the theory of stably embedded pairs of $\ACF_p$ and $\DOAG$, respectively. Let $\cK_{\RF}$ (resp. $\cK_{\VG}$) be the natural class such that $T_{\RF}$ (resp. $T_{\VG}$) is the theory of $\cK_{\RF}$-beautiful pairs (resp. $\cK_{\VG}$-beautiful pairs). We assume that at least one of $\cK_{\RF}$ and $\cK_{\VG}$ is not the trivial class.
\item  We let $\SCVF_{p,E}(T_{\RF},T_{\VG})$ be the $\cL_P$-theory of pairs $(K,P(K))$ of models of $\SCVF_p$ satisfying the following:
\begin{itemize}
        \item $P(K)\models\SCVF_{p,e_P}$, $K\models\SCVF_p$ and $e(\VF(K)/\VF(P(K))=e$,
        \item $\VF(K)/\VF(P(K))$ is separable,
        \item $\VF(K)/\VF(P(K))$ is $vs$-defectless, and
        \item $(\VG(K),\VG(P(K)))\models T_{\VG}$ and $(\RF(K),\RF(P(K)))\models T_{\RF}$.
    \end{itemize}
\item We define $\cK_{p,E}(\cK_{\VG},\cK_{\RF})$ to be the class of $\cL_P$-structures $(B,P(B))$ satisfying the following:
      \begin{itemize}
      \item $B$ is an $\cLgk^{\lambda}$-substructure of a model of $\SCVF_p$,
        \item $P(B)\models\SCVF_{p,e_P}$ and $e(\mathrm{Frac}(\VF(B))/\VF(P(B))\leq e$,
        \item $\mathrm{Frac}(\VF(B))/\VF(P(B))$ is separable,
        \item $\mathrm{Frac}(\VF(B))/\VF(P(B))$ is $vs$-defectless, and
        \item $(\VG(B),\VG(P(B)))\in \cK_{\VG}$ and $(\RF(B),\RF(P(B)))\in \cK_{\RF}$.
    \end{itemize}
    \item Let $(B,P(B)),(C,P(C))\in \cK_{p,E}(\cK_{\VG},\cK_{\RF})$ such that $(B,P(B))\subseteq(C,P(C))$. We say that $(B,P(B))$ is a \emph{$\BP$-substructure} of $(C,P(C))$, and we write $(B,P(B))\subseteq_{\BP}(C,P(C))$, if both $(\VG(B),\VG(P(B)))\subseteq_{\BP}(\VG(C),\VG(P(C)))$ and $(\RF(B),\RF(P(B)))\subseteq_{\BP}(\RF(C),\RF(P(C)))$. 
    
    A \emph{$\BP$-embedding} is an isomorphism onto a $\BP$-substructure.
\end{itemize}
\end{definition}

\begin{lemma}\label{lem:consistent} For any $E=(e_P,e)$, $T_{\RF}$ and $T_{\VG}$ as in Definition~\ref{def:pairs-SCVF}, the theory $\SCVF_{p,E}(T_{\RF},T_{\VG})$ is consistent.
\end{lemma}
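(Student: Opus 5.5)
We construct an explicit model. Start with a small model $K_0\models\SCVF_{p,e_P}$. Pick pairs $(\RF_1,\RF(K_0))\models T_{\RF}$ and $(\VG_1,\VG(K_0))\models T_{\VG}$: elementary, stably embedded pairs extending the residue field and value group of $K_0$. Such pairs exist because $\RF(K_0)\models\ACF_p$ and $\VG(K_0)\models\DOAG$. Since $T_{\RF}$ and $T_{\VG}$ are complete theories of beautiful pairs with EP, we may take suitable models whose small part is $\RF(K_0)$, resp.\ $\VG(K_0)$, e.g.\ via beautiful pairs over an elementary extension. Each is realized as a stably embedded (definable-type) extension, so $\RF_1$ and $\VG_1$ are generated over the small side by realizations of definable types.

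Next build a valued field extension $K_0\subseteq L_0$ realizing these auxiliary sorts. Choose a residue transcendence basis $(\bar t_i)$ of $\RF_1$ over $\RF(K_0)$ and values $(\gamma_j)$ in $\VG_1$ forming a $\mathbb{Q}$-basis over $\VG(K_0)$. Adjoin elements $t_i$ with residues $\bar t_i$ and elements $s_j$ with $v(s_j)=\gamma_j$. The resulting field $K_0(t,s)$ with the Gauss-type valuation has residue field $\RF(K_0)(\bar t)$ and value group $\VG(K_0)\oplus\bigoplus\mathbb{Z}\gamma_j$. The monomials $t^\alpha s^\beta$ form a valuation basis, so $K_0(t,s)/K_0$ is $vs$-defectless and separable (it is purely transcendental).

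To get the prescribed relative imperfection degree $e$, add $e$ further elements. These can be taken among the $t_i,s_j$ if enough exist; otherwise add fresh elements $u_k$ with, say, values in $\VG(K_0)$ and generic residues. The catch is that fresh residues would change the residue field pair. I would instead choose $u_k$ transcendental with $v$ and $\res$ absorbed, using the density of $K_0$. Concretely, let $u_k$ be a limit of a pseudo-Cauchy sequence? That breaks $vs$-defectlessness. So I would first enlarge $\RF_1$ or $\VG_1$: since at least one of $\cK_{\RF},\cK_{\VG}$ is nontrivial, the corresponding beautiful pair has infinite transcendence degree or rank over the small side, by saturation. Choosing those pairs with enough generators supplies at least $e$ elements $t_i$ or $s_j$.

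Then take $L$ to be the separable closure of a suitable field. Let $F$ be the field obtained from $K_0(t,s)$ by adjoining $p^{\infty}$-th roots of all but $e$ chosen generators, so that $F/K_0$ stays separable with $e(F/K_0)=e$. Then set $L:=F^{\sep}$ with an extension of the valuation. Passing to $F^{\sep}$ and adjoining $p$-power roots only makes the value group divisible and the residue field algebraically closed. This lands in the divisible hull of $\VG(K_0)\oplus\bigoplus\mathbb{Z}\gamma_j$, which is contained in $\VG_1$, and in $\RF(K_0)(\bar t)^{\alg}\subseteq\RF_1$. Choosing the bases with generating sets from the start makes these equal to $\VG_1$ and $\RF_1$.

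The main obstacle is checking that $L/K_0$ remains $vs$-defectless after the algebraic step. I would handle it by taking $K_0$ separably maximally complete, which is possible by the Claim in the proof of Proposition~\ref{lem:domination}, and invoking Proposition~\ref{prop:SepMaxima->separated}. Separability of $L/K_0$ holds by construction, and $e(L/K_0)=e$ since separable algebraic extensions and perfections over the other generators preserve the $p$-basis given by the chosen $e$ elements. Finally, $(L,K_0)$ satisfies all the axioms of $\SCVF_{p,E}(T_{\RF},T_{\VG})$, including that the residue and value group pairs are the prescribed models.
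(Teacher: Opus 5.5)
Your argument is correct, but it is organized differently from the paper's.

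\paragraph{The paper's route.} The paper starts from $\omega$-saturated pairs $(k,P(k))\models T_{\RF}$ and $(\Gamma,P(\Gamma))\models T_{\VG}$. It works inside the Hahn fields $P(k)((t^{P(\Gamma)}))\subseteq k((t^{\Gamma}))$ and cuts out separably closed subfields $P_1\subseteq K_1$ by adjoining $p^{-\infty}$-th roots of all but $e_P$ (resp.\ $e$) elements of transcendence bases. The residue fields and value groups are then read off from density (Fact~\ref{F:dense}). Separable maximal completeness of $P_1$ comes from $P_1^{\alg}$ being a Hahn field (Fact~\ref{fact:char-sep-max}).

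\paragraph{Your route.} You fix an arbitrary separably maximally complete $K_0\models\SCVF_{p,e_P}$. You then build the large field algebraically over $K_0(t,s)$ equipped with a Gauss valuation. You control $\RF$ and $\VG$ by the observation that $L$ is algebraic over $K_0(t,s)$ and separably closed.

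\paragraph{Comparison.} Both routes finish with Proposition~\ref{prop:SepMaxima->separated}. Your version shows slightly more: any separably maximally complete model of $\SCVF_{p,e_P}$ occurs as the $P$-part. The cost is that it ties the relative $p$-basis to the residue- and value-transcendental generators. This forces you to inflate $\RF_1$ or $\VG_1$ to have at least $e$ generators, and to do the Gauss-valuation bookkeeping. The paper's density argument handles both auxiliary sorts at once. It can also take the relative $p$-basis from arbitrary transcendence-basis elements of the Hahn field extension, so no counting of generators is needed.

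\paragraph{Two small remarks.}
\begin{itemize}
\item Models of $T_{\RF}$ and $T_{\VG}$ with prescribed small side $\RF(K_0)$, $\VG(K_0)$ and large transcendence degree or rank do not come ``by saturation'' or ``via beautiful pairs over an elementary extension'', since those would enlarge the small side. They are, however, immediate from explicit constructions, such as $\RF(K_0)(x_1,x_2,\ldots)^{\alg}$, or lexicographic sums like $\VG(K_0)\oplus\bigoplus_n\mathbb{Q}\epsilon_n$ with $\epsilon_n$ infinitesimal (and/or an added infinitely large summand). Combine these with the axiomatizations in Examples~\ref{Ex:ACF} and~\ref{Ex:DOAG}.
\item Your claim that the monomials make $K_0(t,s)/K_0$ $vs$-defectless is unnecessary. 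It would need an extra argument that subspaces and quotients by denominators inherit valuation bases. In any case, Proposition~\ref{prop:SepMaxima->separated} already applies to the whole separable extension $L/K_0$.
\end{itemize}
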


\begin{proof}
    We will construct a model of $\SCVF_{p,E}(T_{\RF},T_{\VG})$. Let $(k,P(k))\models T_{\RF}$ and $(\Gamma,P(\Gamma))\models T_{\VG}$ be $\omega$-saturated models. We consider the Hahn series fields (endowed with the $t$-adic valuation) 
    $$P_0:=P(k)((t^{P(\Gamma)}))\subseteq K_0:=k((t^{\Gamma})).$$ Then $P_0\elres K_0\models \ACVF$, and both $\trdeg(P_0)$ and $\trdeg(K_0/P_0)$ are infinite. Let $\mathcal{B}_0$ be a transcendence basis of $P_0$ and $\mathcal{C}_0$ a transcendence basis of $K_0/P_0$. Choose subfamilies $\mathcal{B}_1\subseteq\mathcal{B}_0$ and $\mathcal{C}_1\subseteq\mathcal{C}_0$ such that $|\mathcal{B}_1|=e_P$ if $e_P$ is finite, else $|\mathcal{B}_1|=\aleph_0$, and similarly $|\mathcal{C}_1|=e$ if $e$ is finite, else $|\mathcal{C}_1|=\aleph_0$. Set 
    $$P_1:=\mathbb{F}_p(\mathcal{B}_1\cup\{b^{p^{-n}}\mid b\in\mathcal{B}_0\setminus\mathcal{B}_1,n\in\mathbb{N}\})^{\sep}$$
    and similarly
    $$K_1:=P_1(\mathcal{C}_1\cup\{b^{p^{-n}}\mid b\in\mathcal{C}_0\setminus\mathcal{C}_1,n\in\mathbb{N}\})^{\sep}.$$
We have the following:
\begin{enumerate}
    \item $P_1^{\alg}=P_0$ and $K_1^{\alg}=K_0$.
        \item $P_1\models \SCVF_{p,e_P}$, $K_1\models \SCVF_p$ and $e(K_1/P_1)=e$.
    \item $K_1/P_1$ is separable.
    \item $\RF(K_1)=k$, $\RF(P_1)=P(k)$, $\VG(K_1)=\Gamma$ and $\VG(P_1)=P(\Gamma)$.
    \item $P_1$ is separably maximally complete.
    \item $K_1/P_1$ is $vs$-defectless.
\end{enumerate}
Indeed, (1)-(3) follow easily from the construction. (4) is a consequence of density of $K_1$ in $K_0$ and of $P_1$ in $P_0$, which hold by (1) in combination with Fact~\ref{F:dense}. (5) is by Fact~\ref{fact:char-sep-max} together with the first part of (1). Finally, (6) is a consequence of (5) together with Proposition~\ref{prop:SepMaxima->separated}.

Thus, $(K_1,P_1)\models   \SCVF_{p,E}(T_{\RF},T_{\VG})$. 
\end{proof}

\begin{remark}\label{rem:plus}
If $(K,P(K))\models   \SCVF_{p,E}(T_{\RF},T_{\VG})$ for $E=(e_P,e)$, then $K\models\SCVF_{p,e_P+e}$.\footnote{Here, by convention $\infty+n:=n+\infty:=\infty+ \infty:=\infty$ for all $n\in\mathbb{N}$.}  \qedhere
\end{remark}

\begin{lemma}[Main Embedding Lemma]\label{lem:Embedding}
Assume that $(K,P(K))$ is an $\aleph_1$-saturated model of $\SCVF_{p,E}(T_{\RF},T_{\VG})$. Let $(B,P(B))\subseteq_{\BP} (C,P(C))$ with $(B,P(B)),(C,P(C))\in \cK_{p,E}(\cK_{\VG},\cK_{\RF})$ and $C$ countable. Then every $\BP$-embedding $f:(B,P(B))\hookrightarrow (K,P(K))$ extends to a $\BP$-embedding $g:(C,P(C))\hookrightarrow (K,P(K))$.
\end{lemma}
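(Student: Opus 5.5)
Following the ACVF strategy of \cite{CuHiYe26}, the plan is to extend $f$ in stages: first on the small predicate, then on the value group and residue field pairs, and finally across the separable $vs$-defectless extension, using Corollary~\ref{Cor:our12.11-types} for control at each step.

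\emph{Stage 1: the predicate.} Since $P(B)\models\SCVF_{p,e_P}$ and $P(K)\models\SCVF_{p,e_P}$ is $\aleph_1$-saturated, quantifier elimination (Proposition~\ref{prop:QE-SCVF}) shows that $f\restriction P(B)$ is elementary. So it extends to an elementary embedding $f_P\colon P(C)\to P(K)$, and we may take it to realize over $f(P(B))$ the types that are required. Concretely, the $\BP$-condition says $\tp(B/P(C))$ is the definable extension $\tp(B/P(B))\mid P(C)$, and Proposition~\ref{lem:domination}(1) makes $\tp(B/P(B))$ definable. Then Proposition~\ref{lem:domination}(2) gives the following: the amalgam $\langle B\cup P(C)\rangle$ is determined by data that $f\cup f_P$ respects, namely the residue field and value group $\BP$-conditions, which are preserved because $f$ is a $\BP$-embedding and the $\RF$, $\VG$ pairs of $K$ are beautiful, together with $p$-independence, which holds by goodness. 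Hence $f\cup f_P$ is an $\cL$-embedding onto $\langle f(B)\cup f_P(P(C))\rangle$. Since $e(\VF(K)/\VF(P(K)))=e\ge e(\mathrm{Frac}\,\VF(C)/\VF(P(C)))$, this reduces us to the case $P(B)=P(C)$.

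\emph{Stage 2: the auxiliary sorts.} With $P$ fixed, the pairs $(\VG(B),\VG(P(B)))\subseteq_{\BP}(\VG(C),\VG(P(C)))$ and $(\RF(B),\dots)\subseteq_{\BP}(\RF(C),\dots)$ embed into $(\VG(K),\VG(P(K)))$ and $(\RF(K),\RF(P(K)))$. This uses that these pairs are $\aleph_1$-saturated models of $T_{\VG}$ and $T_{\RF}$, which are $\aleph_1$-beautiful by beauty transfer in Examples~\ref{Ex:ACF} and~\ref{Ex:DOAG}. Each of these steps adds a single element of $\VG(C)$ or $\RF(C)$ not yet in the field's image. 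Next, adding elements of $\VF(C)$ works as in the Claim in Proposition~\ref{prop:QE-SCVF}: we use a separating transcendence basis of finitely generated pieces of $\VF(C)/\VF(B)$ (Fact~\ref{F:characterize-sep-ext}), $p$-independent over $\VF(B)\VF(P(K))$. The key extra requirement is that the image stays separable and $vs$-defectless over $P(K)$, with residue field and value group exactly the prescribed images.

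\emph{Main obstacle.} The hardest step is realizing a new field element $c$ so that $\VF(f(B))(c')$ remains $vs$-defectless over $\VF(P(K))$, with $c'$ $p$-independent over $\VF(f(B))\VF(P(K))$. I would proceed by countable compactness, as in Lemma~\ref{L:Keisler-main}. Take a valuation basis $(u_i)$ over $\VF(P(C))$ of the relevant finite-dimensional spaces. Express the required properties as a countable partial type: the quantifier-free type over $f(B)$, valuation independence over $P(K)$, $p$-independence over $\VF(f(B))\VF(P(K))$ (via Claim~\ref{Cl:1}), and the values of $\VG$ and $\RF$. To prove finite satisfiability, choose the images of a valuation basis as elements whose valuation and residue data are realized generically in $K$ relative to $P(K)$. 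Valuative disjointness then follows from Proposition~\ref{prop:our12.11}, and density of $K^{p}$ (Fact~\ref{F:dense}) lets us perturb the elements to achieve $p$-independence without changing the quantifier-free $\cLgk$-type. Once each finite step is handled, $\aleph_1$-saturation assembles $g$. Preservation of the $\lambda$-functions, and hence that $g$ is an $\cL_P$-embedding, follows from separability of $\VF(K)/g(\VF(C))$, exactly as at the end of the proof of Proposition~\ref{prop:QE-SCVF}.
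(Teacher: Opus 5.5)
Your proposal is correct and follows essentially the same route as the paper. You reduce to $P(B)=P(C)$ by extending $f\restriction P(B)$ elementarily and invoking domination, then embed the residue field and value group pairs via beauty transfer, and finally run the quantifier-elimination argument with $\aleph_1$-saturation over $\VF(C)$, keeping $p$-independence over $\VF(f(B))\VF(P(K))$. The only difference is bookkeeping: you put valuation independence over $P(K)$ explicitly into the partial type and check it via Proposition~\ref{prop:our12.11}, whereas the paper omits it and gets the $\BP$-property at the end directly from Proposition~\ref{lem:domination}(2).
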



\begin{proof}[Proof of Lemma~\ref{lem:Embedding}] 
We may argue exactly as in the proof of \cite[Theorem~7.1.5]{CuHiYe26}, so we will be brief.

\emph{Step 0.} Without loss of generality, we may suppose $(B,P(B))\subseteq_{\BP} (K,P(K))$ and that the valued field sorts of $B$ and $C$ are fields. We may also assume $P(B)=P(C)$, as by the saturation assumption the elementary $\cL$-embedding $f\upharpoonright_{P(B)}:P(B)\hookrightarrow P(K)$ extends to an elementary embedding $\tilde{f}_P:P(C)\hookrightarrow P(K)$, and by Proposition~\ref{lem:domination}(2) (working in the theory $\SCVF_{p,e_P+e}$, see Remark~\ref{rem:plus}), for $(B,P(B))\subseteq (\langle B \cup P(C)\rangle, P(C))\subseteq (C,P(C)$ one has $(B,P(B))\subseteq_{\BP} (\langle B \cup P(C)\rangle, P(C))\subseteq_{\BP} (C,P(C))$, and $\qftp_{\cL_P}((\langle B \cup P(C)\rangle, P(C)))$ is completely determined by $\qftp_{\cL_P}(B,P(B))$, $\tp_{\cL}(P(C)$ and the fact that $(B,P(B)\subseteq_{\BP}(C,P(C))$, yielding that $f\cup \tilde{f}_P$ uniquely extends to a $\BP$-embedding $\tilde{f}:(\langle B \cup P(C)\rangle, P(C))\hookrightarrow (K,P(K))$. 

\medskip

\emph{Step 1.} Since $(C,P(C))\in \cK_{p,E}(\cK_{\VG},\cK_{\RF})$, we have that $(\RF(C), \RF(P(C))\in \cK_{\RF}$ and similarly $(\VG(C),\VG(P(C))\in \cK_{\VG}$. By beauty transfer of $\cK_{\RF}$ and beauty transfer of $\cK_{\VG}$, $(\RF(K),\RF(P(K)))$ is $\aleph_1$-$\cK_{\RF}$-beautiful and $(\VG(K),\VG(P(K))$ is $\aleph_1$-$\cK_{\VG}$-beautiful. In particular, there is an $(\cL_\Ring)_{\BP}$-embedding $g_{\RF}\colon (\RF(C), \RF(P(C))\to (\RF(K),\RF(P(K))$ over $(\RF(B),\RF(P(B))$, and similarly, there is an $(\cL_\og)_{\BP}$-embedding  $g_{\VG}\colon (\VG(C),\VG(P(C)))\to (\VG(K),\VG(P(K)))$ over $(\VG(B),\VG(P(B)))$. 

As $g_{\RF}$ and $g_{\VG}$ are $\BP$-embeddings, $g_{\RF}(\RF(C))\ind_{\RF(P(B))}^{\ld} \RF(P(K))$ and $g_{\VG}(\VG(C))\cap \VG(P(K))=\VG(P(B))$ (see \cite[Fact~6.2.1 and Fact~5.0.1]{CuHiYe26}).

\medskip

\emph{Step 2.} As $e(\VF(C)/\VF(P(C))\leq e$ by assumption, the saturation of $(K,P(K))$ yields that $e(\VF(K)/\VF(B)\VF(P(K))\geq e(\VF(C)/\VF(B))$. The proof of quantifier elimination for $\SCVF_{p,e_P+e}$ in $\cL$ (proof of Proposition~\ref{prop:QE-SCVF}), combined with $\aleph_1$-saturation of the pair $(K,P(K))$ implies that there is an $\cL$-embedding $g\colon (C,P(C))\to (K,P(K))$ over $B$ inducing the maps $g_{\RF}$ and $g_{\VG}$ from Step 1 such that $g(\VF(C))\ind^{p}_{\VF(B)}\VF(P(K))$, thus also $g(\VF(C))\ind^{p}_{\VF(P(B))}\VF(P(K))$. (Note that being $p$-independent over $\VF(B)\VF(P(K))$ is expressible by a partial $\cL_P$-type over $B$.)

Moreover, we have that
\begin{align*}
& g(C)\models \tp(C/P(B)) \\ 
& g(\RF(C))\models  \tp(\RF(C)/\RF(P(B)))\mid\RF(P(K)) \\ 
& g(\VG(C))\models  \tp(\VG(C)/\VG(P(B)))\mid\VG(P(K)).  
\end{align*}
Part (2) of Proposition~\ref{lem:domination} yields that $g$ induces a $\BP$-embedding. (It is clear that $g$ respects the predicate $P$ and is thus an $\cL_P$-embedding.) \end{proof}

\begin{theorem}\label{thm:SCVF-Pair-Complete}
For any $E=(e_P,e)$, $T_{\RF}$ and $T_{\VG}$ as in Definition~\ref{def:pairs-SCVF} we have the following.

\begin{enumerate}
    \item The theory $\SCVF_{p,E}(T_{\RF},T_{\VG})$ is complete.
    \item Any $\BP$-isomorphism between $\BP$-substructures of models of $\SCVF_{p,E}(T_{\RF},T_{\VG})$ is elementary.
    \item In any model of $\SCVF_{p,E}(T_{\RF},T_{\VG})$, the predicate $P$ is stably embedded with induced structure a pure model of $\SCVF_{p,e_P}$.
\end{enumerate}
\end{theorem}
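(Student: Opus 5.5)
The plan is a back-and-forth argument built on the Main Embedding Lemma (Lemma~\ref{lem:Embedding}), in the same way Theorem~\ref{T:Keisler} was derived from Lemma~\ref{L:Keisler-main}.

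\emph{Part (2).} Let $\cM,\cN\models\SCVF_{p,E}(T_{\RF},T_{\VG})$. Passing to elementary extensions, we may assume both are $\aleph_1$-saturated. Let $f\colon(B,P(B))\to(B',P(B'))$ be a $\BP$-isomorphism between $\BP$-substructures of $\cM$ and $\cN$. First we check that every countable $\BP$-substructure of a model of the theory lies in $\cK_{p,E}(\cK_{\VG},\cK_{\RF})$. The model itself lies in the class, since the residue field and value group pairs, being models of $T_{\RF}$ and $T_{\VG}$, are stably embedded pairs in $\cK_{\RF}$ and $\cK_{\VG}$. This property is inherited by $\BP$-substructures, because separability, $vs$-defectlessness and the bound on relative imperfection degree all pass to subextensions $\mathrm{Frac}(\VF(B))/\VF(P(B))$ with $P(B)$ an elementary substructure. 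Next we build a system of partial $\BP$-isomorphisms between countable $\BP$-substructures with the back-and-forth property. Given $f$ with countable domain and a countable $\BP$-substructure $(C,P(C))\supseteq_{\BP}(B,P(B))$ of $\cM$, Lemma~\ref{lem:Embedding} extends $f$ to a $\BP$-embedding of $C$ into $\cN$. The back direction is symmetric. This yields $\cL_P$-elementarity of $f$ in the standard way, via a Karp-type argument that induction on formulas preserves the system.

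The main obstacle is the ``forth'' step for arbitrary elements. We need every countable subset $A$ of $\cM$, together with a given countable $\BP$-substructure $B$, to be contained in a countable $\BP$-substructure $C$ of $\cM$ with $B\subseteq_{\BP}C$. The plan is a closure argument in the style of the proof of the Claim in Theorem~\ref{T:preserve-tame}. Take $P(C)$ to be a countable elementary substructure of $P(\cM)$ chosen so that the residue field and value group parts of $C$ have $\BP$-position over $P(C)$, i.e.\ the $\RF$-parts are linearly disjoint and the value group intersection is correct. Alternate $\omega$ times, using the following facts: $\VF(\cM)/\VF(P(\cM))$ is $vs$-defectless, so valuation bases over $P(\cM)$ of finite-dimensional spaces involve only countably many parameters; $\BP$-position in $\ACF$ and $\DOAG$ has local character; and $p$-independence has local character. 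Take the union. Then the valued field extension $\VF(C)/\VF(P(C))$ is separable and $vs$-defectless (as a valuation basis over $P(\cM)$ which lies in $P(C)$-coefficients), and $B\subseteq_{\BP}C$ holds because the $\BP$ relation is defined via $\RF$ and $\VG$ only.

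\emph{Part (1).} Completeness follows from (2) applied to the minimal structure $B_0$ whose field part is $\mathbb F_p$, whose value group part is trivial, and whose residue field part is $\mathbb F_p$. However, $B_0$ is not a valid base, since $P(B_0)$ must be a model. So instead we use $\aleph_1$-saturation and the Embedding Lemma to embed a countable elementary substructure $\cM_0\elres\cM$ into $\cN$ as a $\BP$-substructure. This is done by first mapping $P(\cM_0)$ elementarily into $P(\cN)$, which is possible by completeness of $\SCVF_{p,e_P}$ (Proposition~\ref{prop:QE-SCVF}), giving a $\BP$-embedding of the trivial pair $(P(\cM_0),P(\cM_0))$. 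We then extend via Lemma~\ref{lem:Embedding}, and the back-and-forth from (2) gives $\cM\equiv\cN$.

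\emph{Part (3).} This follows as in Theorem~\ref{T:Keisler}(3). For $\cM\elres\cN$ and tuples $A_1,A_2$ with $P(\cM)\elres A_i\elres P(\cN)$ having the same $\cL$-type over $P(\cM)$, the structures $\langle \cM\cup A_i\rangle$ are $\BP$-substructures, by Proposition~\ref{lem:domination}(2) together with the domination description of the compositum. The induced map is a $\BP$-isomorphism, hence elementary by (2). This gives stable embeddedness, and purity follows by applying (2) to trivial pairs inside $P$.
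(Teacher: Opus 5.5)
Your proposal is correct and follows the paper's proof. Part (2) is a back-and-forth between countable $\BP$-substructures driven by Lemma~\ref{lem:Embedding}; the paper merely asserts the filtered-union property that you sketch via a closure argument. Part (1) comes from $\BP$-embedding a trivial pair into a saturated model and applying (2), and Part (3), which the paper leaves to the reader, goes as in Theorem~\ref{T:Keisler}(3).

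One small caution concerns your preliminary check. Separability does pass to $\mathrm{Frac}(\VF(B))/\VF(P(B))$, but $vs$-defectlessness and the imperfection bound do not descend to arbitrary subextensions over a smaller $P$-part. Also, the $\DOAG$ $\BP$-condition is stronger than $\VG(C)\cap\VG(P(\cM))=\VG(P(C))$. Neither point breaks the argument: $\BP$-substructures lie in $\cK_{p,E}(\cK_{\VG},\cK_{\RF})$ by definition, and your closure argument enforces these conditions by local character. Make sure that closure also enforces linear disjointness from $\VF(P(\cM))$ over $\VF(P(C))$, so that the valuation bases you pull down actually span.
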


\begin{proof}
    By Lemma~\ref{lem:consistent} the theory $\SCVF_{p,E}(T_{\RF},T_{\VG})$ is consistent. It is easy to see that any $\BP$-substructure of a model of $\SCVF_{p,E}(T_{\RF},T_{\VG})$ is a filtered union of countable $\BP$-substructures. Part (2) thus follows from from Lemma~\ref{lem:Embedding}. Given $M_0\models\SCVF_{p,e_P}$, the trivial pair $(M_0,M_0)$ (with $P(M_0)=M_0)$ may be $\BP$-embedded into any $|M_0|$-saturated model of $\SCVF_{p,E}(T_{\RF},T_{\VG})$. This yields (1). The straight forward proof of (3) is left to the reader.
\end{proof}
    



\section{Beautiful pairs of separably closed valued fields}\label{S:SCVF-BP}
In this section, we specialize the results from the previous section and thereby infer results on beautiful pairs of separably closed valued fields.

In what follows, we keep the conventions and the notations from Definition~\ref{def:pairs-SCVF}. By $\cK^{\DE}_{\VG}$ we denote the natural class of all definable types in $\DOAG$, by $\cK^{\bdd}_{\VG}$ that of all bounded definable types, and by $\cK^{\triv}_{\VG}$ the trivial natural class in $\DOAG$. The classes $\cK^{\DE}_{\RF}$ and $\cK^{\triv}_{\RF}$ in $\ACF_p$ are similarly defined.

\begin{lemma}
Let $e_P\in\mathbb{N}\cup\{\infty\}$ if $e=\infty$, and $e_P=0$ if $e<\infty$. Assume that $\cK_{\RF}$ and $\cK_{\VG}$ correspond to completions of the theory of stably embedded pairs of $\ACF_p$ and $\DOAG$, respectively, where not both are the trivial class. Then the following holds.

\begin{enumerate}
    \item $\cK_{p,(e,e_P)}(\cK_{\VG},\cK_{\RF})$ is a natural class of definable types in the theory $\SCVF_{p,e}$. 
\item If $e<\infty$, in $\SCVF_{p,e}$ the natural classes $\cK_{p,(e,0)}(\cK^{\DE}_{\VG},\cK^{\DE}_{\RF})$, $\cK_{p,(e,0)}(\cK^{\bdd}_{\VG},\cK^{\DE}_{\RF})$ and $\cK_{p,(e,0)}(\cK^{\triv}_{\VG},\cK^{\DE}_{\RF})$ correspond to the classes of all definable types, of all bounded definable types (i.e., those orthogonal to $p_{+\infty}$ in $\VG$) and of all stably dominated types. 

Moreover, the latter class is equal to the class of all definable types orthogonal to $\VG$ and to the class of all generically stable types.
\item If $e=\infty$, in $\SCVF_{p,e}$ the natural classes $\cK_{p,(e,e_P)}(\cK^{\DE}_{\VG},\cK^{\DE}_{\RF})$, $\cK_{p,(e,e_P)}(\cK^{\bdd}_{\VG},\cK^{\DE}_{\RF})$ and $\cK_{p,(e,e_P)}(\cK^{\triv}_{\VG},\cK^{\DE}_{\RF})$ correspond to the classes of all definable types, of all bounded definable types and of all definable types orthogonal to $\VG$, in each case with the condition that the $\VF$-extension given by a realization of the type is of imperfection degree at most $e_P$. 

Moreover, the class of all definable types orthogonal to $\VG$ is equal to the class of all generically stable types.
\end{enumerate}
\end{lemma}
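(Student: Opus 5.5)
The plan is to reduce everything to the algebraic characterization of definable types in Proposition~\ref{lem:domination}(1), applied in the theory $\SCVF_{p,e}$ with $K=P(\cA)\elres\cU$ and $L=A$.

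\emph{Part (1).} By Proposition~\ref{lem:domination}(1), for $P(\cA)\elres\cU$ and $A$ an $\cL$-substructure with $\VF(A)$ a field (after passing to fractions, cf.\ Step~1 of Proposition~\ref{prop:QE-SCVF}), $\tp(A/P(\cA))$ is definable if and only if $\VF(A)/\VF(P(\cA))$ is $vs$-defectless and $\tp(\VG(A)/\VG(P(\cA)))$ is definable. Separability of $\VF(A)/\VF(P(\cA))$ is automatic, since $\lambda$-substructures of $\cU$ are separable over elementary substructures by Fact~\ref{F:characterize-sep-ext}(4). The relative imperfection degree bound $\le e_P$ is a type-definable property of realizations. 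It is invariant under automorphisms, closed under unions of finite subtuples, and preserved under pushforwards, because $\mathrm{dcl}$ of a realization stays inside the $\lambda$-closure, which has no larger $p$-degree. The conditions on the residue field and value group pairs are inherited from the natural classes $\cK_{\RF}$ and $\cK_{\VG}$ via pure stable embeddedness (Proposition~\ref{prop:QE-SCVF}(2)). I would then check the axioms of \cite[Definition~2.26]{CuHiYe26} one by one; each reduces to the corresponding axiom for $\cK_{\RF}$ and $\cK_{\VG}$ together with the algebraic facts above.

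\emph{Parts (2) and (3).} Here I match each class with the corresponding orthogonality condition.
\begin{itemize}
\item With $\cK^{\DE}_{\VG}$ and $\cK^{\DE}_{\RF}$, Proposition~\ref{lem:domination}(1) gives exactly all definable types. For $e<\infty$ the bound $e_P=0$ is automatic, since a $\lambda$-substructure extension of a model of $\SCVF_{p,e}$ inside $\cU$ has relative $p$-degree $0$.
\item Boundedness, i.e.\ orthogonality to $p_{+\infty}$, depends only on the induced value group type by stable embeddedness of $\VG$. This matches $\cK^{\bdd}_{\VG}$.
\item Orthogonality to $\VG$ means $\VG(A)=\VG(P(\cA))$, which is $\cK^{\triv}_{\VG}$. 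By the domination statement Proposition~\ref{lem:domination}(2), such types are dominated by their residue field part, a stable stably embedded sort, so they are stably dominated.
\item For the last equivalence, I use that stably dominated implies generically stable. Conversely, a generically stable type has generically stable pushforward to $\VG$, which in an o-minimal group must be realized, so it is orthogonal to $\VG$.
\end{itemize}

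The main obstacle is the ``moreover'' clauses, in particular that generically stable types coincide with those orthogonal to $\VG$ and are stably dominated. I would try to import the ACVF argument of \cite{CuHiYe26} and \cite{HL} through Proposition~\ref{lem:domination}(2) and Corollary~\ref{Cor:our12.11-types}.
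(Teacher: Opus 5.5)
Your treatment of part (1), of the first two bullets, and of the implication ``generically stable $\Rightarrow$ orthogonal to $\VG$'' is in line with the paper. However, the step ``orthogonal to $\VG$ $\Rightarrow$ stably dominated'' in your third bullet fails in general, and the moreover clause of (3) depends on it.

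Proposition~\ref{lem:domination}(2) does not say that $\tp(L/K)$ is dominated by its residue field part. The data $(\ast)$ also contain the condition $\VF(L)\ind^p_{\VF(K)}\VF(K')$, which is not a condition on $\RF$.

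\begin{itemize}
\item When $e<\infty$ this condition is vacuous, because $e(\VF(L)/\VF(K))=0$. You observe that this degree is $0$, but you must invoke it precisely at this point: it is what makes the stable domination argument in (2) work.
\item When $e=\infty$ and $e_P>0$ the condition is a genuine constraint. The paper in fact conjectures (Conjecture~\ref{Conj:stdom}) that the type of an $x\in\OO$ with generic residue which is $p$-independent over $\VF(K)$ is orthogonal to $\VG$ but \emph{not} stably dominated.
\end{itemize}

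So in (3) you cannot route ``orthogonal to $\VG$ $\Rightarrow$ generically stable'' through stable domination. Proving stable domination there is neither required by the statement nor expected to be true.

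The missing idea is to use Proposition~\ref{lem:domination}(2) only to obtain symmetry. Let $p=\tp(L/K)$ be definable and orthogonal to $\VG$. Then the value group part of $(\ast)$ is trivial. What remains is independence of the residue fields over $\RF(K)$ (nonforking in the stable theory $\ACF_p$) and $\ind^p$ over $\VF(K)$, and both are symmetric relations. Hence the conditions characterizing $p_x\otimes p_y$ are symmetric in $x$ and $y$, so $p_x\otimes p_y=p_y\otimes p_x$. Since $\SCVF_{p,\infty}$ is NIP, an invariant type commuting with itself is generically stable.

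A smaller point concerns part (1). The nontrivial axiom to check is closure under base change $\cA\mapsto\cA_B$. This needs the moreover part of Proposition~\ref{lem:domination}(2), namely that $\VF(\widetilde L)\VF(K')$, $\VG(\widetilde L)+\VG(K')$ and $\RF(\widetilde L)\RF(K')$ form an $\cL$-substructure. The facts you list about separability, imperfection degree and pushforwards are not enough for this.
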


\begin{proof}
    By Proposition~\ref{lem:domination}, the types in the classes in question are all definable. Note that if $e<\infty$ and $P(K)\elres K\models\SCVF_{p,e}$, then $e(\VF(K)/\VF(P(K)))=0$, which imposes $e_P=0$ in this case. 

    Part (1) is proved as in the corresponding cases in $\ACVF$ (see \cite[Lemma~7.1.3]{CuHiYe26}, noting that for closure under base extension we may argue as in \cite[Remark~7.1.2]{CuHiYe26}, using in addition the moreover part in Proposition~\ref{lem:domination}(2). 

    Part (2) and (3) are straight forward, except the moreover parts. Note that in any theory stably dominated types are generically stable, and generically stable types are orthogonal to any interpretable stably embedded set $D$ such that $\Th(D)$ eliminates imaginaries and is equipped with a definable total order. The latter holds for $\VG$ in $\SCVF$, as it is stably embedded and a pure model of $\DOAG$.

    If $e<\infty$, $K\elres\cU$ and if $K\subseteq L,K'\subseteq\cU$ are $\cLgk^{\lambda}$-substructures such that $\VF(L)$ and $\VF(K')$ are fields, then $\VF(L)\ind^p_{\VF(K)}\VF(L)$ is automatically satisfied, as $e(\VF(L)/\VF(K))=0$. Thus if $\tp(L/K)$ is definable and orthogonal to $\VG$, it is stably dominated by Proposition~\ref{lem:domination}(2), as $\RF$ is stably embedded with induced structure a pure model of $\ACF_p$. This finishes the proof of the moreover part in (2).

    If $e=\infty$ and $p(x):=\tp(L/K)$ is definable and orthogonal to $\VG$, then Proposition~\ref{lem:domination}(2) implies that $p(x)\otimes p(y)=p(y)\otimes p(x)$, as both independence in $\RF$ and $\ind^p$ are symmetric. This shows that $p$ is generically stable, as $\SCVF_{p,\infty}$ is NIP (see \cite{simon}).
\end{proof}

We conjecture that the stably dominated types in $\SCVF_{p,\infty}$ are precisely the types in the class $\cK_{p,(e,0)}(\cK^{\triv}_{\VG},\cK^{\DE}_{\RF})$. This is easily seen to be equivalent to Conjecture~\ref{Conj:stdom} below.  Contrarily to the case of finite imperfection degree $e$, where the imaginaries in $\SCVF_{p,e}$ are classified by the geometric sorts (see \cite{HKR2016}), no classification of imaginaries in $\SCVF_{p,\infty}$ is known, and so an approach by inspection in the (geometric) sorts is not possible. 

\begin{conjecture}\label{Conj:stdom}
   Let $K\models\SCVF_{p,\infty}$, and let $q(x)$ be the definable type of an element $x\in\OO(\cU)$  such that $\res(x)$ is generic over $\RF(K)$ and $x$ is $p$-independent in $\VF(\cU)$ over $\VF(K)$. Then $q$ is orthogonal to $\VF$ (equivalently, generically stable). We conjecture that $q$ is not stably dominated. 
\end{conjecture}

\begin{remark}
 Conjecture~\ref{Conj:stdom} implies that $\SCVF_{p,\infty}$ is not metastable over $\VG$. Observe that by \cite[Corollary~6.11]{HKR2016}  $\SCVF_{p,e}$ is metastable over $\VG$ for any finite degree of imperfection $e$.   
\end{remark}

\begin{theorem}\label{thm:beauty-SCVF}
Let $e_P\in\mathbb{N}\cup\{\infty\}$ if $e=\infty$, and $e_P=0$ if $e<\infty$. Assume that $\cK_{\RF}$ and $\cK_{\VG}$ correspond to completions $T_{\RF}$ and $T_{\VG}$ of the theory of stably embedded pairs of $\ACF_p$ and $\DOAG$, respectively, where not both are the trivial class. 

Set $T:=\SCVF_{p,e}$ and $\cK:=\cK_{p,(e,e_P)}(\cK_{\VG},\cK_{\RF})$. Then $\cK$ has AP, EP and beauty transfer, and $T_{\BP}(\cK)=\SCVF_{p,(e,e_P)}(T_{\RF},T_{\VG})$. Moreover, $\SCVF_{p,e}$ has UDDT($\cF_{\cK}$).
\end{theorem}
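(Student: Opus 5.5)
The natural route is to make the Main Embedding Lemma~\ref{lem:Embedding} serve as the back-and-forth engine, mirroring the $\ACVF$ argument of \cite[Theorem~7.1.5]{CuHiYe26}.

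First I would identify the pairs. For $\cA=(A,P(\cA))$ an \sepa-pair of $T=\SCVF_{p,e}$, Proposition~\ref{lem:domination}(1) shows that $\tp(A/P(\cA))$ is definable iff $\mathrm{Frac}(\VF(A))/\VF(P(\cA))$ is $vs$-defectless and the value group pair is stably embedded. Separability of $\mathrm{Frac}(\VF(A))/\VF(P(\cA))$ comes for free, since $\cL^\lambda$-substructures of $\cU$ over $P(\cA)\elres\cU$ give separable extensions. Hence $\cK$ is exactly the class $\cK_{p,(e,e_P)}(\cK_{\VG},\cK_{\RF})$ of Definition~\ref{def:pairs-SCVF}. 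Next I would check that the two notions of $\BP$-substructure coincide: the one from Section~\ref{S:beauty}, via $\tp(A/P(\cB))=\tp(A/P(\cA))|P(\cB)$, and the one in Definition~\ref{def:pairs-SCVF}, via the residue and value group pairs. One direction is immediate from stable embeddedness of $\RF$ and $\VG$. The converse is Proposition~\ref{lem:domination}(2), applied with $K=P(\cA)$ and $K'=P(\cB)$. Here I need $\VF(A)\ind^p_{\VF(P(\cA))}\VF(P(\cB))$, which follows from separability of $\VF(\cB)/\VF(P(\cB))$ together with the fact that $\VF(P(\cB))/\VF(P(\cA))$ is an elementary, hence separable, extension.

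Next I would prove AP and beauty transfer together. By Lemma~\ref{lem:consistent} and Theorem~\ref{thm:SCVF-Pair-Complete}(1), $\SCVF_{p,(e,e_P)}(T_{\RF},T_{\VG})$ is consistent and complete. Lemma~\ref{lem:Embedding} shows that every $\aleph_1$-saturated model satisfies (BP) for countable $\cB$. The same proof with $\lambda$-saturation in place of $\aleph_1$ gives $\lambda$-$\cK$-beauty, since Steps 0--2 only use saturation of the pair and of the residue field and value group pairs, whose beauty transfer is available from Examples~\ref{Ex:ACF} and~\ref{Ex:DOAG}. So $\lambda$-beautiful pairs exist for every $\lambda$, and Fact~\ref{fact:charct-beaut-pairs}(1) yields AP.

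Any $\lambda$-saturated model is $\lambda$-beautiful, and any two beautiful pairs are elementarily equivalent by Fact~\ref{fact:charct-beaut-pairs}(2). Hence $T_\BP(\cK)$ equals the complete theory $\SCVF_{p,(e,e_P)}(T_{\RF},T_{\VG})$, and beauty transfer holds by Fact~\ref{fact:charct-beaut-pairs}(3)(a). The beautiful pairs are models of this theory, so in particular they are elementary pairs, and the second half of Fact~\ref{fact:charct-beaut-pairs}(1) gives EP. Finally, UDDT($\cF_\cK$) follows from Fact~\ref{fact:main-beauty}(1).

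I expect the main obstacle to be the imperfection-degree bookkeeping in the identification step and in saturation. One issue is making sure that $e(\VF(K)/\VF(P(K)))=e$ is reached in saturated models while members of $\cK$ only satisfy $\le e$. The other is the constraint $e_P=0$ when $e<\infty$: in that case $P(K)$ and $K$ have equal finite imperfection degree, and "$e(\cdot/P)\le e$" has to be read with the paper's indexing convention, where $E$ is written $(e,e_P)$ in the theorem but $(e_P,e)$ in Lemma~\ref{lem:consistent}.
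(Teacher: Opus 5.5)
Your overall route is the paper's. Lemma~\ref{lem:Embedding} (in its $\lambda$-saturated form) shows that saturated models of $\SCVF_{p,(e,e_P)}(T_{\RF},T_{\VG})$ are $\cK$-beautiful. AP, the identification of $T_{\BP}(\cK)$, beauty transfer, EP and UDDT then follow from Facts~\ref{fact:charct-beaut-pairs} and~\ref{fact:main-beauty} as you describe.

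\textbf{The gap.} The problem is the converse direction in your identification of the two $\BP$-notions. When $e=\infty$ and $e_P\geq1$ it is false. Separability of $\VF(\cB)/\VF(P(\cB))$ and of $\VF(P(\cB))/\VF(P(\cA))$ does not give $\VF(A)\ind^p_{\VF(P(\cA))}\VF(P(\cB))$. Here is a counterexample.
\begin{itemize}
\item Take $P_0\elres P_1\elres\cU$ with $e(\VF(P_1)/\VF(P_0))\geq1$, and a unit $b\in\VF(P_1)$ that is $p$-independent over $\VF(P_0)$.
\item Take $c\in\OO(\cU)$ with $\res(c)$ transcendental over $\RF(P_1)$ and $c$ $p$-independent over $\VF(P_1)$. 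Put $a:=c^pb$.
\item Let $\cA$ have field sort $\VF(P_0)(a)$ and $P(\cA)=P_0$; let $\cB$ have field sort $\VF(P_1)(c)$ and $P(\cB)=P_1$.
\end{itemize}
Then $a$ is $p$-independent over $\VF(P_0)$, since otherwise $b=ac^{-p}$ would not be. So $\cA\subseteq\cB$ are substructures, and both pairs are Gauss extensions, hence $vs$-defectless and in $\cK$ (for $\cK_{\RF}$ non-trivial). The value group pairs are trivial. Since $\res(a)=\res(c)^p\res(b)$ is transcendental over $\RF(P_1)$, the residue and value group pairs are $\BP$-inclusions.

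However, $\cU\models\exists z\,(a=z^pb)$ with $b\in P_1$, while no $y\in P_0$ satisfies $\exists z\,(a=z^py)$. Hence $\tp(a/P_0)|P_1$ contains $\neg\exists z\,(x=z^pb)$, and so $\tp(a/P_1)\neq\tp(a/P_0)|P_1$. The converse does hold when $e<\infty$ or $e_P=0$: then $\VF(P(\cB))$, respectively $\VF(A)$, lies in the $p$-closure of $\VF(P(\cA))$.

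The same example shows that the $\BP$-inclusions of Definition~\ref{def:pairs-SCVF} must be read as including this $p$-independence for Lemma~\ref{lem:Embedding} to hold. Indeed, no $\cL_P$-embedding of $\cB$ into a model $(K,P(K))$ can extend a $\BP$-embedding $f$ of $\cA$ with $f(a)$ $p$-independent over $\VF(P(K))$.

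\textbf{The repair.} You do not need the converse; work with the natural-class notion $\tp(A/P(\cB))=\tp(A/P(\cA))|P(\cB)$ throughout.
\begin{itemize}
\item Your easy direction, together with the moreover part of Proposition~\ref{lem:domination}, shows that every genuine $\BP$-inclusion satisfies the residue, value group \emph{and} $p$-independence conditions. (The moreover part says a realization of $\tp(A/P(\cA))|P(\cB)$ is $\ind^{\ld,p}$-independent from $P(\cB)$ over $P(\cA)$.) This is exactly what Step~0 of the proof of Lemma~\ref{lem:Embedding} uses to invoke domination.
\item In Step~2, $g$ is chosen with $g(\VF(C))\ind^p_{\VF(B)}\VF(P(K))$. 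Combine this by transitivity with $\VF(B)\ind^p_{\VF(P(B))}\VF(P(K))$, which holds because $f$ is a genuine $\BP$-embedding. This gives $p$-independence over $\VF(P(C))=\VF(P(B))$.
\item Proposition~\ref{lem:domination}(2) then yields $\tp(g(C)/P(K))=\tp(C/P(C))|P(K)$ directly.
\end{itemize}
So the lemma's construction produces $\BP$-embeddings in the sense Fact~\ref{fact:charct-beaut-pairs} requires. With that, the rest of your argument goes through as in the paper.
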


\begin{proof}
The proof is completely analogous to the proof of the corresponding result for ACVF (\cite[Theorem~7.1.5]{CuHiYe26}). 

Existence of $\cK$-beautiful pairs, beauty transfer for $\cK$ and $T_{\BP}(\cK)=\SCVF_{p,(e,e_P)}(T_{\RF},T_{\VG})$ are direct consequences of Lemma~\ref{lem:Embedding}. AP for $\cK$ follows from the existence of $\cK$-beautiful pairs, and the fact that $\cK$-beautiful pairs are elementary pairs implies EP. Finally, UDDT($\cF_{\cK}$) is by Fact~\ref{fact:main-beauty}(1).
\end{proof}

Combining Proposition~\ref{lem:domination} with Theorem~\ref{thm:beauty-SCVF}, we obtain the following analog of the corresponding result in $\ACVF$ (see Example~\ref{Ex:ACVF}).

\begin{corollary}\label{cor:completions-SE}
Let $(\SCVF_{p,e})_{\SE}$ be the $\cL_P$-theory of stably embedded elementary pairs of models of $\SCVF_{p,e}$. 
\begin{enumerate}
    \item If $e<\infty$, $(\SCVF_{p,e})_{\SE}$ has 8 completions, namely the theory of the trivial pair and the theories $\SCVF_{p,(e,0)}(T_{\RF},T_{\VG})$, for $T_{\RF}$ and $T_{\VG}$ not both trivial. Each completion corresponds to the theory of $\cK$-beautiful pairs for some natural class $\cK$ having AP, EP and beauty transfer.
    \item The completions of $(\SCVF_{p,e})_{\SE}$ are given by the theory of the trivial pair and by the theories $\SCVF_{p,(\infty,e_P)}(T_{\RF},T_{\VG})$, for $e_P\in\mathbb{N}\cup\{\infty\}$, and $T_{\RF}$ and $T_{\VG}$ not both trivial. Each completion corresponds to the theory of $\cK$-beautiful pairs for some natural class $\cK$ having AP, EP and beauty transfer.\qedhere
\end{enumerate}
\end{corollary}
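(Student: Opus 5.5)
The plan is to show that every completion of $(\SCVF_{p,e})_{\SE}$ is one of the listed theories, and that each listed theory is complete and is $T_\BP(\cK)$ for a natural class with AP, EP and beauty transfer. The second half is Theorem~\ref{thm:beauty-SCVF}, together with the trivial class $\cK_\triv$, which has all three properties in any theory. So the real work is to show that every model of $(\SCVF_{p,e})_{\SE}$ satisfies one of the listed theories.

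\textbf{Step 1.} Let $(K,P(K))$ be a stably embedded elementary pair of models of $T=\SCVF_{p,e}$. Then $\tp(K/P(K))$ is definable. Applying Proposition~\ref{lem:domination}(1) with $L=K$ gives that $\VF(K)/\VF(P(K))$ is $vs$-defectless and that $\tp(\VG(K)/\VG(P(K)))$ is definable. Pure stable embeddedness (Proposition~\ref{prop:QE-SCVF}(2)) shows that the value group pair is an \sepa-pair of $\DOAG$. The residue field pair is an elementary pair of models of $\ACF_p$, hence an \sepa-pair since $\ACF_p$ is stable.

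\textbf{Step 2.} Separability of $\VF(K)/\VF(P(K))$ is automatic, because an elementary extension of fields is regular. Next compute the relative imperfection degree $e_P:=e(\VF(K)/\VF(P(K)))$. If $e<\infty$, then a $p$-basis of $P(K)$ stays $p$-independent in $K$ by separability, and it has size $e=e(K)$, so it is a $p$-basis of $K$. Hence $e_P=0$. If $e=\infty$, then $e_P$ may be any element of $\mathbb{N}\cup\{\infty\}$. Here the statement's $\SCVF_{p,(\infty,e_P)}$ should be read with the index convention of Theorem~\ref{thm:beauty-SCVF}: the relative degree is $e_P$ and the models have imperfection degree $\infty$.

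\textbf{Step 3.} By Examples~\ref{Ex:ACF} and~\ref{Ex:DOAG}, the residue field pair satisfies one of the 2 completions $T_\RF$ and the value group pair one of the 4 completions $T_\VG$. If both are trivial, the pair is trivial. Indeed, the pair is then immediate, and an immediate $vs$-defectless extension is trivial: a valuation basis of $\mathrm{span}(1,a)$ would force $a\in P(K)$. Otherwise $(K,P(K))$ satisfies all the axioms of $\SCVF_{p,(e,e_P)}(T_\RF,T_\VG)$, which is complete by Theorem~\ref{thm:SCVF-Pair-Complete}.

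\textbf{Step 4.} Conversely, each listed theory is a theory of $\cK$-beautiful pairs with EP, so its models are elementary pairs. Such pairs are stably embedded, since the types are definable by the lemma on natural classes preceding Theorem~\ref{thm:beauty-SCVF}. The theories are pairwise distinct, since $T_\RF$, $T_\VG$ and $e_P$ are visible in the theory. Counting gives $2\cdot4-1=7$ nontrivial completions plus the trivial pair, i.e. $8$ for $e<\infty$.

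The main obstacle is Step 3's identification of the trivial case. One must make sure a nontrivial pair with both auxiliary pairs trivial cannot occur; this is exactly where the $vs$-defectless condition is used.
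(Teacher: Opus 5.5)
Your proposal is correct and follows the paper's route. The paper's proof is just the combination of Proposition~\ref{lem:domination}, which puts every stably embedded elementary pair among the trivial pair or the models of some $\SCVF_{p,(e,e_P)}(T_{\RF},T_{\VG})$, with Theorem~\ref{thm:beauty-SCVF} for completeness, AP, EP and beauty transfer. You supply exactly the details it leaves implicit: regularity giving separability, $e_P=0$ for finite $e$, the index-convention reading, and the fact that an immediate $vs$-defectless extension is trivial. The only point worth making explicit is that treating arbitrary models of $(\SCVF_{p,e})_{\SE}$ as genuine stably embedded pairs uses that this class is elementary. That follows from UDDT($\cF^{\Def}$), which Theorem~\ref{thm:beauty-SCVF} provides for the class corresponding to all definable types.
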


\begin{corollary}\label{Cor:strictpro}
    Let $\cK=\cK_{p,(e,e_P)}(\cK_{\VG},\cK_{\RF})$ be a natural class of definable types in the theory $\SCVF_{p,e}$, and let $X$ be an $A$-definable set, where  $A\subseteq\cU\models\SCVF_{p,e}$. Then the space $\cF_X$ carries naturally the structure of a strict $A$-pro-definable set. In particular, the space $S^{\DE}_{X}$ of all definable types, the space $\widetilde{X}$ of all bounded definable types and the space $\widehat{X}$ of all generically stable definable types are strict pro-definable.\qedhere
\end{corollary}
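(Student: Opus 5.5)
The plan is to derive the corollary directly from Fact~\ref{fact:main-beauty}(2), using Theorem~\ref{thm:beauty-SCVF} as input. Let $\cF:=\cF_{\cK}$ be the natural class of global definable types attached to $\cK=\cK_{p,(e,e_P)}(\cK_{\VG},\cK_{\RF})$. The preceding lemma (part (1)) shows that this is indeed a natural class in $T=\SCVF_{p,e}$. Theorem~\ref{thm:beauty-SCVF} then shows that $\cK$ has AP and beauty transfer. Since $T$ eliminates quantifiers in $\cLgk^{\lambda}$ (Proposition~\ref{prop:QE-SCVF}), we are in the standing setting of Section~\ref{S:beauty}. Fact~\ref{fact:main-beauty}(2) therefore applies verbatim: for any $A$-definable $X$, the space $\cF_X=\cF_X(\cU)$ is canonically a strict $A$-pro-definable set. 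This proves the first assertion.

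For the ``in particular'' part, we choose $\cK_{\VG}$ and $\cK_{\RF}$ suitably and identify the resulting class $\cF_{\cK}$ using parts (2) and (3) of the preceding lemma. We always take $\cK_{\RF}=\cK^{\DE}_{\RF}$.

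When $e<\infty$ (so $e_P=0$), the choices $\cK^{\DE}_{\VG}$, $\cK^{\bdd}_{\VG}$ and $\cK^{\triv}_{\VG}$ give, respectively:
\begin{itemize}
\item all definable types, hence $S^{\DE}_X$;
\item all bounded definable types, hence $\widetilde X$;
\item all stably dominated types. By the moreover clause of part (2), these coincide with the generically stable types, hence $\widehat X$.
\end{itemize}

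When $e=\infty$, part (3) attaches the extra condition that the $\VF$-extension have imperfection degree at most $e_P$. Taking $e_P=\infty$ makes this condition vacuous, so the same three choices of $\cK_{\VG}$ yield the full classes:
\begin{itemize}
\item all definable types;
\item all bounded definable types;
\item all definable types orthogonal to $\VG$, which by the moreover clause of part (3) are exactly the generically stable types.
\end{itemize}
In every case, note that not both of $\cK_{\VG}$ and $\cK_{\RF}$ are trivial, since $\cK_{\RF}$ is $\cK^{\DE}_{\RF}$. So the hypotheses of Theorem~\ref{thm:beauty-SCVF} hold.

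I expect no real obstacle here, since the corollary is a formal consequence of the earlier results. The only point needing care is that the phrase ``the space $\cF_X$'' refers to global types in $\cF_{\cK}$ concentrating on $X$. One must also check that, for $e=\infty$, the choice $e_P=\infty$ really removes the imperfection-degree restriction. This holds because any separable extension has relative imperfection degree at most $\infty$.
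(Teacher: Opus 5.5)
Your proposal is correct and follows the same route as the paper, which states the corollary without a separate proof. Theorem~\ref{thm:beauty-SCVF} gives AP and beauty transfer for $\cK$, Fact~\ref{fact:main-beauty}(2) then yields strict $A$-pro-definability of $\cF_X$, and the special cases $S^{\DE}_X$, $\widetilde{X}$, $\widehat{X}$ come from parts (2) and (3) of the lemma preceding Theorem~\ref{thm:beauty-SCVF}, with $\cK_{\RF}=\cK^{\DE}_{\RF}$ and $e_P=\infty$ when $e=\infty$.
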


\begin{remark}
    In finite degree of imperfection, strict pro-definability of $\widehat{X}$ has been established in \cite{HKR2016} with completely different methods. All the other results on strict pro-definability are new. Corollary~\ref{cor:completions-SE} together with Corollary~\ref{Cor:strictpro} give a complete answer to \cite[Question~9.3.6]{CuHiYe26}, in arbitrary degree of imperfection.
\end{remark}

\begin{remark}
    As in the case of ACVF (see \cite[Section~7.3]{CuHiYe26}), one may infer a result on $\cK$-beautiful pairs for classes $\cK=\cK(\cK_{\VG},\cK^{\DE}_{\RF})$ considered in the language of the geometric sorts $\cL^{\mathcal{G}}$. Indeed, if $\cK$ is the class of $\sepa$-pairs in $\cL^{\mathcal{G}}_P$ induced by $\cK_{\VG}$ and $\cK^{\DE}_{\RF}$, then $\cK$-beautiful pairs exist and are elementary pairs, axiomatized by $\SCVF_{p,(e,0)}(T_{\VG},(\ACF_p)_{\BP})$ for $e<\infty$ and by $\SCVF_{p,(\infty,\infty)}(T_{\VG},(\ACF_p)_{\BP})$, respectively. As a consequence, for any definable set $X$ in the geometric sorts, 
    $\cF_X$ is strict pro-definable, where $\cF=\cF_{\cK}$.
\end{remark}

\begin{proof}
    The proof is by resolutions, as in the case of ACVF (see \cite[Theorem~7.3.1]{CuHiYe26}). If $K\elres\cU$ is countable and $\cU$ is $\aleph_1$-saturated, by density of $\VF(\cU)^{p^{\infty}}$ in $\VF(\cU)$ (see Fact~\ref{F:dense}) and saturation there is $c\in \OO\cap\cU^{p^{\infty}}$ such that $\res(c)\not\in \RF(K)$. Note that $\tp(c/K)$ is definable, and actually stably dominated. 
    
    If in addition $K$ is $\omega$-saturated, $\VF(K)^{p^{\infty}}$ is dense in $\VF(K)$, and so $S_n(\VF(K))=S_n(\VF(K)^{p^{\infty}}$ 
     by \cite[Lemma~2.1]{HKR2016}, i.e., every $\OO$-lattice in $\VF(K)^n$ admits a basis of vectors with coefficients from $\VF(K)^{p^{\infty}}$. 

One may now show that every $\aleph_1$-saturated model of $\SCVF_{p,(e,0)}(T_{\VG},(\ACF_p)_{\BP})$, considered in the language $\cL^{\mathcal{G}}_P$ is a $\cK$-beautiful pair, by reducing this result to the three-sorted language as in \cite[Theorem~7.3.1]{CuHiYe26}. We leave the details to the reader.  
\end{proof}

\begin{corollary}
    For any $e\in\mathbb{N}\cup\{\infty\}$, the theory $\left(\SCVF_{p,e}\right)_{\SE}$ is NIP.
\end{corollary}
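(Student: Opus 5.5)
The plan is to reduce NIP of $(\SCVF_{p,e})_{\SE}$ to NIP of each of its completions. A theory is NIP if and only if all its completions are. By Corollary~\ref{cor:completions-SE}, these completions are the theory of the trivial pair together with the theories $T_{\BP}(\cK)=\SCVF_{p,(e,e_P)}(T_{\RF},T_{\VG})$ for the relevant natural classes $\cK$.

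The trivial pair is harmless. There $P$ names the whole model, so the theory is interdefinable with $\SCVF_{p,e}$, which is NIP.

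For the non-trivial completions I would imitate the proof of Theorem~\ref{T:preserve-tame}, using the coheir-counting criterion. Take $M\elres N$ models of $T_{\BP}(\cK)$ with $|M|=|N|=\kappa$, and consider a global coheir $q$ of $\tp_{\cL_P}(N/M)$. The aim is to show that $q$ is determined by a bounded amount of data, namely the following:
\begin{enumerate}[(i)]
\item the $\cL$-coheir $\tp_{\cL}(P(N)/P(\cU))$ of $\tp_{\cL}(P(N)/P(M))$; there are at most $2^{\kappa}$ of these, since $\SCVF_{p,e_P}$ is NIP;
\item the restriction of $q$ to the residue field pair $(\RF,P(\RF))$; there are boundedly many choices, because $T_{\RF}$ is stable (it is a theory of pairs of $\ACF$, Poizat);
\item the restriction of $q$ to the value group pair $(\VG,P(\VG))$; here I would use that $\DOAG_{\SE}$ and its completions are NIP, since Dedekind-complete pairs of an o-minimal theory are dense-pair-like and known to be NIP;
\item the independence conditions automatically satisfied by a coheir: $p$-independence and linear disjointness of $\VF(N)$ from $\VF(\cU)$ over $\VF(M)\VF(P(\cU))$, and the $\BP$-substructure condition.
\end{enumerate}

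The mechanism for pinning down $q$ is as follows. The data in (i)–(iv) should make $\langle N\cup\cU\rangle$ a $\BP$-substructure whose quantifier-free $\cL_P$-type is determined. By Theorem~\ref{thm:SCVF-Pair-Complete}(2), $\BP$-isomorphisms are elementary, so this quantifier-free type then fixes $q$. To control the quantifier-free type, I would use domination: apply Proposition~\ref{lem:domination}(2) and Corollary~\ref{Cor:our12.11-types} in the ambient theory $\SCVF_{p,e_P+e}$. These reduce the $\cL$-type of $N$ over $\cU$, given the $p$-independence and the value-group and residue-field data, to $\tp(N/M)$ plus the $\RF$- and $\VG$-types over $\cU$.

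The main obstacle is the domination step. Proposition~\ref{lem:domination} requires $\tp(N/M)$ to be definable, and in $\SCVF$ a coheir of an arbitrary type is not definable. So I would not apply domination to $N$ over $M$ directly. Instead I would first pass to a separably maximally complete immediate extension, as in Claim~\ref{claim:sep-max-elem}, where $vs$-defectlessness of all separable extensions is available via Proposition~\ref{prop:SepMaxima->separated}.

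If that route stalls, I would fall back on a softer argument. Realize the pair as $\SCVF$ with $P$ a predicate for a small model together with stable embeddedness (Theorem~\ref{thm:SCVF-Pair-Complete}(3)). Then use the general fact that if $T$ is NIP and $P$ is stably embedded with NIP induced structure, and types in the pair reduce to $\cL$-types over $P$-closed sets, the pair is NIP. Concretely, every $\cL_P$-formula would be equivalent to an $\cL$-formula with quantifiers over $P$ ranging over definable-type parameters, and I would then invoke UDDT from Theorem~\ref{thm:beauty-SCVF}.
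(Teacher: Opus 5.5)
Your reduction to completions and your treatment of the trivial pair match the paper. The main argument for the non-trivial completions, however, has a genuine gap at the step where the data (i)--(iv) are supposed to determine $q$.

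Domination (Proposition~\ref{lem:domination}(2), Corollary~\ref{Cor:our12.11-types}) needs a definable type, resp.\ a $vs$-defectless extension. For $M\preccurlyeq N$ models of $T_{\BP}(\cK)$, the type $\tp_{\cL}(N/M)$ is in general not definable. For instance, if $M$ is countable and $N$ is $\aleph_1$-saturated, then $N$ contains a limit $a$ of a Cauchy sequence from $\VF(M)$ that has no limit in $\VF(M)$. Then $\{v(a-c)\mid c\in\VF(M)\}$ has no maximum, so $\VF(M)(a)/\VF(M)$ is not $vs$-defectless, and $\tp_{\cL}(a/M)$ is not definable by Proposition~\ref{lem:domination}(1). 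This extension is immediate, so (ii) and (iii) carry no information about $a$, and your list does not even contain $\tp_{\cL}(N/\cU)$.

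Passing to a separably maximally complete immediate extension as in Claim~\ref{claim:sep-max-elem} does not help. In the paper that claim only serves to show that a type already known to be definable is $vs$-defectless; it cannot make $\tp_{\cL}(N/M)$ definable.

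Suppose you add the $\cL$-coheir $\tp_{\cL}(N/\cU)$ to your data, which is legitimately bounded in number since $\SCVF$ is NIP. Even then, Theorem~\ref{thm:SCVF-Pair-Complete}(2) does not apply to $\langle N\cup\cU\rangle$. A $\BP$-substructure in the sense of Definition~\ref{def:pairs-SCVF} has $P$-part a model of $\SCVF_{p,e_P}$. The $P$-part of $\langle N\cup\cU\rangle$ contains $\VF(P(N))\VF(P(\cU))$ but has no reason to be separably closed. In the proof of Theorem~\ref{T:preserve-tame}, good substructures only require $P$ to be a subfield, and linear disjointness fixes the compositum. No valued analogue of that step is available here, and supplying one would again require domination over $\cU$ for non-definable types.

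Your fallback is the argument that works, and it is what the paper does in packaged form. By Corollary~\ref{cor:completions-SE}, every completion, including the trivial one $T_{\BP}(\cK_{\triv})$, is $T_{\BP}(\cK)$ for a natural class $\cK$ with AP, EP and beauty transfer. The paper then applies \cite[Corollary~2.4.17]{CuHiYe26}, which transfers NIP from $\SCVF_{p,e}$ to such $T_{\BP}(\cK)$.

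The mechanism is essentially the one you sketch at the end:
\begin{enumerate}[(a)]
\item Elementarity of $\BP$-isomorphisms (Theorem~\ref{thm:SCVF-Pair-Complete}(2), cf.\ Fact~\ref{fact:main-beauty}(3)) and UDDT (Theorem~\ref{thm:beauty-SCVF}) make every $\cL_P$-formula equivalent to a bounded one. The reason is that the $\cL_P$-type of a tuple $a$ is fixed by its $\cL$-type over a small $P_0\preccurlyeq P$ over which $\tp_{\cL}(a/P)$ is definable. UDDT makes ``the definition scheme of $\tp_{\cL}(a/P)$ has parameters in $P$'' expressible by quantifying over $P$ only.
\item $P$ is purely stably embedded with NIP induced structure (Theorem~\ref{thm:SCVF-Pair-Complete}(3)).
\item A bounded pair of an NIP theory with NIP induced structure on $P$ is NIP, by the Chernikov--Simon criterion.
\end{enumerate}
You should discard the coheir count, make this the proof, and cite that criterion precisely rather than as a ``general fact''.
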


\begin{proof}
A theory is NIP if and only if all its completions are NIP. The result thus follows from Corollary~\ref{cor:completions-SE}, using that $\SCVF_{p,e}$ is NIP and applying \cite[Corollary~2.4.17]{CuHiYe26}.
\end{proof}

\begin{remark}
    More generally, one may show that $\SCVF_{p,E}(T_{\RF},T_{\VG})$ is NIP for any choice of $E=(e_P,e)$, $T_{\RF}$ and $T_{\VG}$.
\end{remark}

\bibliography{biblio}

\end{document}